\documentclass[12pt,a4paper]{article}
\usepackage{amssymb}

\usepackage{amssymb}
\usepackage{hyperref}

\usepackage{graphicx,amssymb,amsfonts,epsfig,a4,amsmath,amsthm,xypic}

\usepackage[T1]{fontenc}
\usepackage[utf8]{inputenc}
\usepackage{xcolor}
\usepackage{tikz}
\usetikzlibrary{arrows.meta}
\usetikzlibrary{calc}

\usepackage{tikz-cd}

\newtheorem{Thm}{Theorem}[section]
\newtheorem{Prop}[Thm]{Proposition}
\newtheorem{Lem}[Thm]{Lemma}
\newtheorem{Cor}[Thm]{Corollary}
\newtheorem{Def}[Thm]{Definition}

\newtheorem{Rem}[Thm]{Remark}
\newtheorem{Ex}[Thm]{Example}
\newtheorem{Conj}[Thm]{Conjecture}

\newtheorem{Exo}[Thm]{Exercise}

\newcommand{\Z}{\mathbb{Z}}

\newcommand{\N}{\mathbb{N}}
\newcommand{\R}{\mathbb{R}}
\newcommand{\C}{\mathbb{C}}
\newcommand{\Q}{\mathbb{Q}}
\newcommand{\K}{\mathbf{K}}

\newcommand{\T}{\mathbb{T}}

\newcommand{\HH}{\mathcal{H}}
\newcommand{\bpr}{\noindent \textbf{Proof}: }

\newcommand{\epr}{~$\blacksquare$}

\newcommand{\ra}{\rightarrow}

\title{The Baum-Connes conjecture: a concise course}
\author{Alain VALETTE\footnote{The author acknowledges support of the Institut Henri Poincar\'e (UAR 839 CNRS-Sorbonne Universit\'e), and LabEx CARMIN (ANR-10-LABX-59-01).}}
\date{\today}

\begin{document}

\maketitle

\begin{abstract} These are the slightly edited notes from a 10-hour course I taught at Institut Henri Poincar\'e in January-February 2025 in the framework of the thematic semester {\it Representation theory and noncommutative geometry}.
\end{abstract}

\tableofcontents

\section{Introduction/Preamble}

The Baum-Connes conjecture (i.e. conjecture \ref{BCconj} below) can be seen as a group-equivariant version of the Atiyah-Singer index theorem. Its importance therefore stems from the fact that, like Atiyah-Singer, it bridges two domains of mathematics: topology and analysis. Moreover, since its inception in 1982\footnote{Although published in 2000, the paper \cite{BC82} was circulated as soon as 1982 as an IHES preprint.},  the Baum-Connes conjecture generated a lot of interesting mathematics!

On a very personal note, Baum-Connes has been fascinating me for about 45 years, and aspects of Baum-Connes have been responsible for most of my mathematical trajectory: from Baum-Connes to K-amenability, from K-amenability to property (T) and to the Haagerup property, from the Haagerup property to $CAT(0)$ cube complexes, from property (T) to expander graphs, from expander graphs to embeddings into Hilbert spaces and more general Banach spaces.

This is to say that I accepted with immense pleasure, when Pierre Clare approached me in the fall of 2024 with the idea of teaching an introductory 10-hour course on Baum-Connes at the beginning of the IHP semester {\it Representation theory and noncommutative geometry}. I thank the organizers of the program, Alexandre Afgoustidis, Anne-Marie Aubert, Pierre Clare, Jan Frahm, Angela Pasquale, and Haluk Seng\"un, for giving me this opportunity. Special thanks are due to Pierre Clare who accompanied me all the way through the production of this set of notes. Thanks are also due to the staff at Institut Henri Poincar\'e, who made these 2 weeks in Paris most enjoyable.

The course was a pleasure to teach, also due to the positive attitude of the audience, who came up with numerous excellent questions. Hopefully this set of notes retains some of the informal atmosphere of the lectures. Since the course was videotaped, the reader can also refer to the videos available on \verb"https://www.carmin.tv/en/collections/2025-t1-representation-theory"

\noindent
\verb"-and-noncommutative-geometry"

One thing I was not certain of before the course, was how much time is needed to explain the actors in Conjecture \ref{BCconj}, namely both sides of the conjecture plus the assembly map. It took me 6 hours, corresponding to sections 2 to 7 of these notes. The next 3 hours were devoted to the Dirac-dual Dirac method and Nishikawa's recent approach (sections 8 and 9). Finally the last hour (corresponding to section 10) was devoted to a few examples of discrete groups for which the assembly map can be explicitly described. Actually the reader will notice, throughout the notes, a definite bias towards discrete groups, due to my taste for geometric group theory. 

I cannot end this Introduction without thanking Maria-Paula Gomez Aparicio and Pierre Julg, with whom I had the pleasure of co-authoring the long survey paper \cite{GoJuVa}, fully devoted to the Baum-Connes conjecture and its mathematical offsprings. It is clear that, without that training, teaching the course at IHP would have been much more difficult. Finally, thanks are due to Claudia Stadlmayr for producing the figures in Tikz, and to an extremely kind referee for a careful reading and a number of constructive comments.


\section{What is the Baum-Connes conjecture?}

References for this section: \cite{BC82, BCH94}.

Let $G$ be a locally compact, second countable group (e.g. a countable discrete group, or a connected Lie group).

\begin{Conj}\label{BCconj} (the {\bf Baum-Connes conjecture}) For $i=0,1$, the assembly map 
\begin{equation}\label{BCstandard}
\mu_G: K_i^G(\underline{E}G)\rightarrow K_i(C^*_r(G))
\end{equation}
is an isomorphism.
\end{Conj}

A good part of these notes will be devoted to defining the objects in (\ref{BCstandard}). The left-hand side $K_i^G(\underline{E}G)$ is called the {\it equivariant K-homology of the classifying space for proper actions} of $G$, and is referred to as the {\it geometric} or {\it topological} side. The right-hand side $K_i(C^*_r(G))$ is called the {\it analytical K-theory of the reduced $C^*$-algebra} of $G$, and is referred to as the {\it analytical} side. The map $\mu_G$ is also referred to as the {\it index map}, or the {\it Baum-Connes map}.

\medskip
Several remarks are in order:

\begin{itemize}
\item $K_i^G(\underline{E}G)$ and $K_i(C^*_r(G))$ are abelian groups, and $\mu_G$ is a group homomorphism.
\item There is one conjecture per group $G$ - the game is to prove it either for a given group, or for a class of groups.
\item As of September 2026, no counter-example is known.
\item Injectivity of $\mu_G$ is known for many groups (e.g. countable linear groups, see \cite{GHW}).
\item Conjecture \ref{BCconj} has been proved for several important classes of groups (e.g. connected Lie groups \cite{CEN}, amenable groups \cite{HiKa}, Gromov hyperbolic groups \cite{Laff}).
\item Surjectivity of $\mu_G$ is NOT known for $G=SL_3(\Z)$.
\item Neither injectivity nor surjectivity is known for the free Burnside group $B(d,n)$ (when it is infinite).
\item Conjecture \ref{BCconj} has interesting applications (two of them will be discussed in section \ref{applications} below).
\end{itemize}

\section{The RHS of Conjecture \ref{BCconj}}

\subsection{Reduced group $C^*$-algebras}

Reference for this subsection: \cite{Dix}

Recall that, for $f,g$ functions on $G$, the convolution $f\ast g$ is defined as 
$$(f\ast g)(s)=\int_G f(t)g(t^{-1}s)\,dt,$$
where $s\in G$ and $dt$ denotes left Haar measure on $G$. The Banach space $L^1(G)$ is a Banach algebra with the convolution product. The {\it left regular representation} $\lambda_G$ of $L^1(G)$ on $L^2(G)$ is the representation by left convolution:
$$\lambda_G(f)\xi=f\ast\xi\:(f\in L^1(G),\xi\in L^2(G)).$$

\begin{Def} The reduced $C^*$-algebra $C^*_r(G)$ is the norm closure of $\lambda_G(L^1(G))$ in the bounded operators on $L^2(G)$.
\end{Def}

\begin{Ex}\label{abelian1} If $G$ is abelian with Pontryagin dual $\hat{G}:=Hom(G,S^1)$, then by Fourier transform $C^*_r(G)$ identifies with $C_0(\hat{G})$, the continuous function vanishing at infinity on $\hat{G}$.
\end{Ex}


\subsection{K-theory for $C^*$-algebras}

References for this subsection: \cite{Black,Kar}

Our aim in this section is, for a $C^*$-algebra $A$, to define abelian groups $K_i(A)\;(i=0,1)$, covariant in $A$. We start with $K_1$ as the group structure is more immediately apparent.

\subsubsection{$K_1(A)$}

Assume first that the $C^*$-algebra $A$ is unital. Let $M_n(A)$ denote the $C^*$-algebra of $n$-by-$n$ matrices with coefficients in $A$, and let $U_n(A)$ denote its unitary group, endowed with the norm topology. Denote by $U_n(A)^0$ the connected component of identity of $U_n(A)$ - so that $U_n(A)^0$ is an open normal subgroup. The embedding $U_n(A)\rightarrow U_{n+1}(A):T\mapsto\left(\begin{array}{cc}T & 0 \\0 & 1\end{array}\right)$ induces a homomorphism $U_n(A)/U_n(A)^0\rightarrow U_{n+1}(A)/U_{n+1}(A)^0$ and we define:

\begin{Def} For $A$ a unital $C^*$-algebra: $$K_1(A):=\lim_{\longrightarrow} U_n(A)/U_n(A)^0.$$
\end{Def}

By polar decomposition, $U_n(A)$ is a deformation retract of the invertible group $GL_n(A)$, so an equivalent definition is: $$K_1(A)=\lim_{\longrightarrow}GL_n(A)/GL_n(A)^0.$$

Observe that $S\in GL_m(A),\,T\in GL_n(A)$ define the same element in $K_1(A)$ if and only if $S,T$ are {\it stably homotopic}, i.e. there exists $N\geq\max\{m,n\}$ large enough such that $S,T$ are homotopic in $GL_N(A)$.

\begin{Ex}\begin{enumerate}
\item For $A=\C$, the unitary group $U_n(\C)$ is connected, hence $K_1(\C)=0$.
\item For $A=C(S^1)$, the continuous functions over the circle, the winding number $w:GL_1(C(S^1))\rightarrow\Z$ induces an isomorphism $GL_1(C(S^1))/GL_1(C(S^1))^0\simeq\Z$. Composing with the determinant, we have that $w\circ\det$ induces an isomorphism $GL_n(C(S^1))/GL_n(C(S^1))^0\simeq\Z$. Finally $K_1(C(S^1))=\Z$.
\end{enumerate}\end{Ex}

\begin{Def} For $A$ a non-unital $C`*$-algebra: $K_1(A):=K_1(A^+)$ where $A^+$ denotes the unitization of $A$.
\end{Def}

\begin{Prop}\label{K1abelian} The group $K_1(A)$ is abelian.
\end{Prop}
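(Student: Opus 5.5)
We may assume $A$ unital, since by definition $K_1(A)=K_1(A^+)$ for non-unital $A$. The plan is the standard rotation (Whitehead) trick. Given classes $[S],[T]\in K_1(A)$, we first stabilize so that both representatives lie in the same $U_n(A)$. This is allowed because the connecting maps $T\mapsto \mathrm{diag}(T,1)$ do not change the class.

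The first step is to show that in $K_1(A)$ the product $[ST]$ equals the block sum $[\mathrm{diag}(S,T)]$, viewed in $U_{2n}(A)$. Write $\mathrm{diag}(ST,1)=\mathrm{diag}(S,1)\,\mathrm{diag}(T,1)$. The key observation is that $\mathrm{diag}(T,1)$ is homotopic in $U_{2n}(A)$ to $\mathrm{diag}(1,T)$. To see this, let
$$R_t=\left(\begin{array}{cc}\cos(\pi t/2)\,1_n & -\sin(\pi t/2)\,1_n\\ \sin(\pi t/2)\,1_n & \cos(\pi t/2)\,1_n\end{array}\right),\quad t\in[0,1].$$
This is a norm-continuous path of unitaries in $M_{2n}(A)$; its entries are scalar multiples of the unit, so it is unitary because the corresponding scalar rotation matrix is. Then
$$t\mapsto R_t\,\mathrm{diag}(T,1)\,R_t^{*}$$
is a continuous path of unitaries from $\mathrm{diag}(T,1)$ to $\mathrm{diag}(1,T)$, since $R_1$ swaps the two blocks up to sign. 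Multiplying by the fixed unitary $\mathrm{diag}(S,1)$ gives a homotopy from $\mathrm{diag}(ST,1)$ to $\mathrm{diag}(S,T)$. Hence $[ST]=[\mathrm{diag}(S,T)]$.

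The second step applies the same argument with the roles of $S$ and $T$ exchanged. This gives $[TS]=[\mathrm{diag}(T,S)]$. Conjugating by the path $R_t$ moves $\mathrm{diag}(S,T)$ to $\mathrm{diag}(T,S)$ inside $U_{2n}(A)$, so $[\mathrm{diag}(S,T)]=[\mathrm{diag}(T,S)]$. Combining the two steps yields $[ST]=[TS]$, so $K_1(A)$ is abelian.

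The only point needing care is making sure that the group law on the inductive limit is really induced by matrix multiplication after stabilization. This holds because the connecting maps are group homomorphisms, so the group structures on the $U_n(A)/U_n(A)^0$ are compatible. Everything else is the explicit rotation homotopy, and the main obstacle is simply to write $R_t$ down correctly and check that it is unitary, which is immediate as noted above.
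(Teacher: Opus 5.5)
Your proof is correct and uses the same idea as the paper: the rotation homotopy $R_t\,\mathrm{diag}(X,1)\,R_t^{*}$ joining $\mathrm{diag}(X,1)$ to $\mathrm{diag}(1,X)$, combined with the factorization $\mathrm{diag}(ST,1)=\mathrm{diag}(S,1)\,\mathrm{diag}(T,1)$. The only difference is the ending: the paper rotates the $S$ factor and concludes because $\mathrm{diag}(1,S)$ and $\mathrm{diag}(T,1)$ commute, whereas you rotate the $T$ factor and then rotate once more to pass from $\mathrm{diag}(S,T)$ to $\mathrm{diag}(T,S)$; the two endings are equivalent.
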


\bpr We may assume $A$ unital. For $S\in U_n(A)$ form $\left(\begin{array}{cc}S & 0 \\0 &1_n\end{array}\right)\in U_{2n}(A)$. Then $\left(\begin{array}{cc}S & 0 \\0 & 1_n\end{array}\right)$ is homotopic to $\left(\begin{array}{cc}1_n & 0 \\0 & S\end{array}\right)$ in $U_{2n}(A)$, as 
$$\left(\begin{array}{cc}\cos(\pi t/2) & -\sin(\pi t/2) \\\sin(\pi t/2) & \cos(\pi t/2)\end{array}\right)\left(\begin{array}{cc}S & 0 \\0 & 1_n\end{array}\right)\left(\begin{array}{cc}\cos(\pi t/2) & \sin(\pi t/2) \\-\sin(\pi t/2) & \cos(\pi t/2)\end{array}\right)\; (t\in[0,1])$$
is a homotopy between them.

In view of this, for $S,T\in U_n(A)$, we have that $\left(\begin{array}{cc}ST & 0 \\0 & 1_n\end{array}\right)=\left(\begin{array}{cc}S & 0 \\0 & 1_n\end{array}\right)\left(\begin{array}{cc}T & 0 \\0 & 1_n\end{array}\right)\in U_{2n}(A)$ is homotopic to 
$\left(\begin{array}{cc}1_n & 0 \\0 & S\end{array}\right)\left(\begin{array}{cc}T & 0 \\0 & 1_n\end{array}\right)$, a product of two commuting matrices. This shows that $K_1(A)$ is abelian.

\epr

\subsubsection{$K_0(A)$}

A projection $p$ in a $C^*$-algebra is an idempotent, self-adjoint element ($p=p^2=p^*$). Denote by $Proj(A)$ the set of projections in the $C^*$-algebra $A$. We will say that two projections $p,q$ in $M_n(A)$ are {\it equivalent} ($p\sim q$) if there exists $x\in M_n(A)$ such that $p=x^*x$ and $q=xx^*$.

\begin{Exo} Verify that, if the projections $p,q\in M_n(A)$ are unitarily conjugate, then $p\sim q$.
\end{Exo}

The map $M_n(A)\rightarrow M_{n+1}(A):T\mapsto \left(\begin{array}{cc}T & 0 \\0 & 0\end{array}\right)$ is an embedding (non-unital in case $A$ has a unit!). We set
$$M_\infty(A):=\lim_{\longrightarrow} M_n(A).$$
The set $Proj(M_\infty(A))$ has an addition given by direct sum: for $p\in M_m(A),q\in M_n(A)$, then $p\oplus q :=\left(\begin{array}{cc}p & 0 \\0 & q\end{array}\right)\in M_{m+n}(A)$. Since $p\oplus q$ is unitarily conjugate to $q\oplus p$, we see that $Proj(M_\infty(A))/\sim$ becomes a commutative monoid.

\begin{Def} For $A$ a unital $C^*$-algebra, $K_0(A)$ is the Grothendieck group of the commutative monoid $Proj(M_\infty(A))/\sim$.
\end{Def}

\begin{Ex}\label{ExK0}\begin{enumerate}
\item For $A=\C$, a projection in $M_\infty(\C)$ is characterized up to equivalence by its rank, so $Proj(M_\infty(\C))/\sim$ is isomorphic to $\N,+$. Kence $K_0(\C)=\Z$.
\item If $X$ is a compact space\footnote{For us, being compact implies being Hausdorff.}, and $A=C(X)$, the $C^*$-algebra of continuous functions on $S$, then by the Serre-Swan theorem $K_0(C(X))$ identifies with $K^0(X)$, the Grothendieck group of the commutative monoid of isomorphism classes of complex vector bundles over $X$. In particular, if $X$ is contractible every vector bundle over $X$ is trivial and $K_0(C(X))=\Z$.
\item Consider $X=S^1$: of course $S^1$ is not contractible, however every {\it complex} vector bundle over $S^1$ is trivial\footnote{This would not be true for {\it real} vector bundles over $S^1$, as shown by the open M\"obius band.}.  Indeed, let $E$ be a complex vector bundle over $S^1$, say of rank $n\geq 1$. View $S^1$ as the quotient of $[0,1]$ by identiying endpoints, let $p:[0,1]\rightarrow S^1$ denote the quotient map. As $[0,1]$ is contractible, the pull-back vector bundle $p^*E$ is trivial. So $E$ is described by the identification of the fibre at 0 with the fibre at 1, i.e. by some element $A\in GL_n(\C)$. But by connectedness of $GL_n(\C)$ we may homotope $A$ to the identity, making $E$ isomorphic to the trivial bundle. As a result $K_0(C(S^1))=\Z$.
\end{enumerate}\end{Ex}

\begin{Rem}\label{idem/proj} Assume that, instead of dealing with projections to define $K_0(A)$, we relax the condition of being self-adjoint, and just work with idempotent elements. So let $Idem(M_\infty(A))$ denote the set of idempotents in $M_\infty(A)$; say that idempotents $e,f\in M_n(A)$ are equivalent $(e\sim' f$) if there exist $u,v\in M_n(A)$ such that $e=vu,\; f=uv$. We claim that the monoid $Proj(M_\infty(A))/\sim$ and $Idem(M_\infty(A))/\sim'$ are actually the same. The reason is that any idempotent in a unital $C^*$-algebra is equivalent to a projection. Here is a short proof (taken from \cite{FaMa}): if $e\in A$ is idempotent, define a positive element:
$$z=1+(e^*-e)^*(e^"-e) = 1-(e^*-e)^2=1-e-e^*+ee^*+e^*e.$$
Then $z$ is invertible and $ze=ez=ee^*e$. So $z$ and $z^{-1}$ commute with $e$ and $e^*$. Set then
$$p:=ee^*z^{-1}.$$
Then $p\in Proj(A)$ (check that $p=p^2$; actually $p$ is the support projection of $e^*$). Finally $ep=p$ trivially while $pe= z^{-1}ee^*e=z^{-1}ze=e$, so $e\sim' p$. So using idempotents rather than projections defines the same group $K_0(A)$. This remark will be useful in Proposition \ref{traceKth}.
\end{Rem}

If $A$ is a non-unital $C^*$-algebra, let $\epsilon:A^+\rightarrow\C$ denote the canonical morphism from the unitization to the complex numbers.

\begin{Def} For $A$ a non-unital $C^*$-algebra, define $$K_0(A):=\ker[\epsilon_*:K_0(A^+)\rightarrow K_o(\C)=\Z].$$
\end{Def}

\subsubsection{Properties of the K-groups}
\begin{itemize}
\item (Functoriality) For $i=0,1$, the assignment $A\rightarrow K_i(A)$ is a covariant, homotopy invariant functor from $C^*$-algebras to abelian groups.
\item (Stability) Let $\mathcal{K}$ denote the algebra of compact operators on $\ell^2$. If $p\in \mathcal{K}$ is a rank 1 projection, then for $i=0,1$ the homomorphism 
$$A\rightarrow A\otimes\mathcal{K}:a\mapsto a\otimes p$$
induces an isomorphism on $K_i$.
\item (Bott periodicity) For $i=0,1$, the group $K_i(A)$ is naturally isomorphic to $K_{1-i}(A\otimes C_0(\R))$.
\item (6-terms exact sequence) Let $I$ be a closed, 2-sided ideal in $A$. Then there is a natural 6-terms exact sequence:
$$\begin{array}{ccccc}K_0(I) & \rightarrow & K_0(A) & \rightarrow & K_0(A/I) \\ \uparrow &  &  &  & \downarrow  \\K_1(A/I) & \leftarrow & K_1(A) & \leftarrow & K_1(I)\end{array}$$
where the horizontal arrows are induced by the inclusion $I\hookrightarrow A$ and the quotient map $A \twoheadrightarrow A/I$.
\end{itemize}

\section{Why do we care about the Baum-Connes conjecture?}\label{applications}

In this section, $G$ will denote a countable, discrete group. 

\subsection{The Kaplansky-Kadison conjecture}

\begin{Conj}\label{KapKad} (Kaplansky-Kadison, around 1954) If $G$ is torsion-free, then the only idempotents in $C^*_r(G)$ are 0 and 1.
\end{Conj}

\begin{Ex} Assume $G$ abelian. Then $\hat{G}$ is a compact abelian group, and $C^*_r(G)=C(\hat{G})$ (see Example \ref{abelian1}). So $C^*_r(G)$ has no non-trivial idempotent if and only if $\hat{G}$ is connected; by Pontryagin duality, this happens if and only if $G$ is torsion-free.
\end{Ex}

\begin{Ex}\label{torsiongivesidem} Assume $G$ has torsion, i.e. there exists $g\in G$ with finite order $N\geq 2$. Then $p_g:=\frac{1}{N}\sum_{k=0}^{N-1}\lambda_G(g^k)$ is a non-trivial projection in $C^*_r(G)$. So the assumption in Conjecture \ref{KapKad} is necessary.
\end{Ex}

\begin{Def} Let $A$ be a $C^*$-algebra. A {\bf trace} on $A$ is a continuous linear functional $\tau$ on $A$ such that $\tau(ab)=\tau(ba)$ for every $a,b\in A$. A trace $\tau$ is {\bf positive} if $\tau(a^*a)\geq 0$ for every $a\in A$. A positive trace $\tau$ is {\bf faithful} if $\tau(a^*a)=0$ implies $a=0$. If $A$ is unital, a trace $\tau$ is {\bf normalized} if $\tau(1_A)=1$.
\end{Def}

\begin{Ex} On $C^*_r(G)$, we have the {\bf canonical trace} $\tau_G(a):=\langle a(\delta_e)|\delta_e\rangle$ for $a\in C^*_r(G)$. It is normalized (clear), positive ($\tau_G(a^*a)=\|a(\delta_e)\|_2^2\geq 0$), and faithful. To check the letter, let $\rho_G$ denote the right regular representation of $G$ on $\ell^2(G)$, and observe that $\rho(G)$ commutes with $C^*_r(G)$. Then, if $a\in C^*_r(G)$ is such that $a(\delta_e)=0$, we have for all $g\in G$:
$$0=\rho(g^{-1})a(\delta_e)=a\rho(g^{-1})(\delta_e)=a(\delta_g);$$
so $a$ vanishes on a basis of $\ell^2(G)$, hence $a=0$.
\end{Ex}

\begin{Exo} Check that $\tau_G$ is indeed a trace on $C^*_r(G)$.
\end{Exo}

\begin{Prop}\label{traceKth} Let $A$ be a $C^*$-algebra with unit $1_A$, and $\tau$ be a trace on $A$.
\begin{enumerate}
\item There exists a homomorphism $\tau_*:K_0(A)\rightarrow \C$ such that $\tau_*[1_A]=\tau(1_A)$.
\item If $\tau$ is positive, then $\tau_*(K_0(A))\subset\R$.
\item If $\tau$ is positive, faithful, normalized, and $\tau_*(K_0(A))=\Z$, then $A$ has no non-trivial idempotent. 
\end{enumerate}\end{Prop}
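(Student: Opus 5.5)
For part 1, I would extend $\tau$ to matrices by setting $\tau_n(x)=\sum_{i=1}^n\tau(x_{ii})$ for $x\in M_n(A)$. This is again a trace on $M_n(A)$: since $\tau((xy)_{ii})=\sum_j\tau(x_{ij}y_{ji})$ and $\tau(x_{ij}y_{ji})=\tau(y_{ji}x_{ij})$, we get $\tau_n(xy)=\tau_n(yx)$. It is also compatible with the embeddings $M_n(A)\hookrightarrow M_{n+1}(A)$, so it defines a trace on $M_\infty(A)$. If $p\sim q$, say $p=x^*x$ and $q=xx^*$, then the trace property gives $\tau_n(p)=\tau_n(q)$. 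Moreover $\tau_{m+n}(p\oplus q)=\tau_m(p)+\tau_n(q)$. Hence $p\mapsto\tau_\infty(p)$ is an additive map from the monoid $Proj(M_\infty(A))/\sim$ to $\C$, and by the universal property of the Grothendieck group it extends to a homomorphism $\tau_*:K_0(A)\to\C$ with $\tau_*[1_A]=\tau(1_A)$.

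For part 2, let $\tau$ be positive. Each diagonal entry of a projection is positive: $p_{ii}=(p^*p)_{ii}=\sum_j p_{ji}^*p_{ji}$. So $\tau_n(p)=\sum_{i,j}\tau(p_{ji}^*p_{ji})\ge 0$ is real, and therefore $\tau_*(K_0(A))$, which is generated by differences of such values, lies in $\R$.

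For part 3, let $e\in A$ be an idempotent. By Remark \ref{idem/proj}, $e\sim' p$ for a projection $p\in A$. Since $e=vu$, $p=uv$ and $\tau$ is a trace, $\tau(e)=\tau(p)$, though I only need the projection $p$ here. Both $p$ and $1-p$ are projections, so positivity gives $0\le\tau(p)\le\tau(1_A)=1$. The hypothesis says $\tau(p)=\tau_*[p]\in\Z$, so $\tau(p)\in\{0,1\}$. If $\tau(p)=0$, then $\tau(p^*p)=0$, and faithfulness gives $p=0$. If $\tau(p)=1$, then $\tau((1-p)^*(1-p))=0$, and faithfulness gives $p=1$.

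The step I expect to be the main obstacle is going back from $p$ to $e$. If $p=0$, then $e=pe=0$, using the relation $pe=e$ from Remark \ref{idem/proj}. If $p=1$, then $ep=p$ gives $e=1$. Either way $e$ is trivial.
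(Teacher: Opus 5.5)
Your proposal is correct and follows the paper's proof: extend $\tau$ to $M_\infty(A)$ by $\tau\circ Tr$, get reality from $\tau(p^*p)\ge 0$ in part 2, and in part 3 replace the idempotent $e$ by a projection $p$ via Remark~\ref{idem/proj}, then conclude with $0\le\tau(p)\le 1$, integrality and faithfulness. You also spell out two points the paper leaves implicit: why $\tau\circ Tr$ is positive on $M_n(A)$, and how $p\in\{0,1_A\}$ forces $e\in\{0,1_A\}$ through the relations $pe=e$ and $ep=p$.
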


\bpr\begin{enumerate}
\item Define
$$Tr:M_n(A)\rightarrow A:a\mapsto \sum_{i=1}^n a_{ii}.$$
The map $Tr$ extends to $M_\infty(A)$; observe that $\tau\circ Tr$ is a trace on $M_\infty(A)$. So, for $p\in Proj(M_\infty(A))$ we may define $\tau_*[p]=(\tau\circ Tr)(p)\in\C$. It is easy to check that this map is well-defined and additive on $Proj(M_\infty(A))/ {\sim}$, hence extends to the desired homomorphism $\tau_*$.
\item If $\tau$ is positive, then for $p\in Proj(M_\infty(A)$ we have $\tau_*[p]=(\tau\circ Tr)(p)=(\tau\circ Tr)(p^*p)\geq 0$.
\item Assume that $\tau$ is positive, faithful, normalized, with $\tau_*(K_0(A))=\Z$. Let $e$ be an idempotent in $A$. By Remark \ref{idem/proj}, there exists $p\in Proj(A)$ such that $e\sim' p$; observe that $\tau(e)=\tau(p)$. So it is enough to prove that either $p=0$ or $p=1_A$. Now $\tau(p)=\tau(p^*p)\geq 0$, and similarly for $1-\tau(p)=\tau(1_A-p)$. So $0\leq\tau(p)\leq 1$. Since $\tau(p)$ is an integer by assumption, we have $\tau(p)\in\{0,1\}$. If $0=\tau(p)=\tau(p^*p)$, then $p=0$ by faithfulness. If $\tau(p)=1$, replace $p$ by $1_A-p$ to similarly get $p=1_A$.\epr
\end{enumerate}

\begin{Conj}\label{Integraltrace} (Integrality of the trace) If $G$ is torsion-free, then $$(\tau_G)_*(K_0(C^*_r(G)))=\Z.$$
\end{Conj}

Note that, if $g\in G$ has order $N\geq 2$, then $\tau_G(p_g)=\frac{1}{N}$ by Example \ref{torsiongivesidem}. So the assumption is also necessary in Conjecture \ref{Integraltrace}.
In view of Proposition \ref{traceKth}, Conjecture \ref{Integraltrace} implies Conjecture \ref{KapKad}.

\begin{Prop}\label{BaumDouglas} If $G$ is torsion-free, then $$(\tau_G)_*(\mu_G(K_0^G(\underline{E}G)))=\Z.$$
\end{Prop}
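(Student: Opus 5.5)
The plan follows the classical argument of Baum and Connes via Atiyah's $L^2$-index theorem.

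\emph{Step 1: reduce to free actions.} Since $G$ is torsion-free, every proper $G$-action has finite (hence trivial) stabilizers, so every proper action is free. Thus $\underline{E}G=EG$, and $K_0^G(\underline{E}G)$ is the inductive limit of $K_0^G(X)$ over $G$-compact, free, proper $G$-spaces $X\subset EG$. Via the Baum--Douglas geometric description of K-homology, each class is represented by a cycle $(M,E,D)$. Here $M$ is a closed manifold with a free proper cocompact $G$-action (obtained, after passing to a suitable model, as a $G$-cover $\tilde M\to M=\tilde M/G$ of a compact spin$^c$ manifold), $E$ is a vector bundle on $M$, and $D$ is the associated Dirac operator twisted by $E$. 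The assembly map sends this cycle to $\mu_G[\tilde D]\in K_0(C^*_r(G))$, the $C^*_r(G)$-valued (Mishchenko--Fomenko) index of the lift $\tilde D$ of $D$ to $\tilde M$.

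\emph{Step 2: compute the trace.} I then need the identity
$$(\tau_G)_*(\mu_G[\tilde D])=\mathrm{ind}_{L^2,G}(\tilde D),$$
which says the canonical trace applied to the $C^*_r(G)$-index equals the $\Gamma$-index. This I would check by writing the index idempotent explicitly, for instance via the Connes--Moscovici / Kasparov parametrix formula with smoothing kernels. The canonical trace of a $G$-invariant smoothing operator is the integral of the diagonal of its kernel over a fundamental domain, which is exactly the von Neumann $G$-trace. Atiyah's $L^2$-index theorem then gives
$$\mathrm{ind}_{L^2,G}(\tilde D)=\mathrm{ind}(D)\in\Z.$$
Both are computed by the same local index density, because heat kernels on $\tilde M$ and $M$ agree asymptotically near the diagonal.

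\emph{Step 3: conclude.} The image is therefore contained in $\Z$. Conversely, the trivial class $[1]=\mu_G$ of the point (the class of $EG\to pt$, or the unit via the map induced by $G\to\{e\}$) has trace $1$, so the image is exactly $\Z$.

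The main obstacle is Step 2. It requires two things. First, one must match the abstract assembly map with a concrete idempotent whose trace is computable; this is the point where the precise definition of $\mu_G$, still to come in the paper, is needed. Second, one needs Atiyah's $L^2$-index theorem, which is the genuine analytic input. I would first try the parametrix and heat-kernel form of the index projection, since $\tau_G$ is directly evaluable on it.
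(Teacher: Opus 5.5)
Your proposal is correct and follows essentially the paper's route: represent classes in $K_0(BG)$ by Baum--Douglas cycles, lift the operator to the pulled-back $G$-cover $\tilde M$, identify $\mu_G$ with the $C^*_r(G)$-index of $\tilde D$ (the paper cites Connes--Moscovici for this), and conclude with Atiyah's $L^2$-index theorem $(\tau_G)_*(Ind_G(\tilde D))=Ind(D)$; you merely split that theorem into two steps, namely identifying $\tau_G$ with the von Neumann $G$-trace and then equating the $L^2$-index with $Ind(D)$. Two small slips: it is $\tilde M$, not the closed manifold $M$, that carries the free cocompact $G$-action, and the point class comes from the inclusion $pt\hookrightarrow BG$ (induction along $\{e\}\hookrightarrow G$), not from $EG\to pt$ or $G\to\{e\}$ --- though your own Step 2 applied to the cycle $(pt,\C,0)$ already shows that its image has trace $1$.
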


From Proposition \ref{BaumDouglas} we deduce immediately:

\begin{Cor} If $G$ is torsion-free and $\mu_G$ is onto, then Conjecture \ref{Integraltrace} holds. In particular, Conjecture \ref{BCconj} implies Conjecture \ref{Integraltrace}.
$\blacksquare$
\end{Cor}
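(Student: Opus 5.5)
Since $G$ is torsion-free, every finite subgroup is trivial, so proper $G$-actions are free. I will therefore identify $\underline{E}G$ with $EG$, and $K_0^G(\underline{E}G)$ with $K_0(BG)$, the K-homology of the classifying space $BG=EG/G$. Equivariant K-homology is defined as a direct limit over $G$-compact subsets, and the assembly map is compatible with these limits. It therefore suffices to compute $(\tau_G)_*\circ\mu_G$ on classes coming from a cocompact free proper $G$-space $X$, with $M=X/G$ compact.

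For this I use the Baum--Douglas geometric description of K-homology. Every class in $K_0(M)$, for $M$ a compact subspace of $BG$, is represented by a cycle $(W,E,f)$. Here $W$ is a closed even-dimensional spin$^c$ manifold, $E$ is a complex vector bundle over $W$, and $f:W\to BG$ is continuous. Pulling back $EG\to BG$ along $f$ gives a Galois $G$-covering $\tilde W\to W$. The assembly map sends the cycle to the $G$-index of the lifted twisted Dirac operator $\tilde D_E$ on $\tilde W$, or equivalently to the index of $D_E$ twisted by the Mishchenko bundle $\tilde W\times_G C^*_r(G)$. Concretely, $\mu_G[W,E,f]$ is the image of the Mishchenko--Fomenko index in $K_0(C^*_r(G))$.

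Applying the canonical trace $\tau_G$ to this index gives Atiyah's $L^2$-index of $\tilde D_E$. This is the $\Gamma$-dimension of the $L^2$-kernel minus that of the $L^2$-cokernel, computed with the von Neumann trace, which restricts to $\tau_G$ on $C^*_r(G)$. The key step is Atiyah's $L^2$-index theorem for Galois coverings: the $L^2$-index of $\tilde D_E$ equals the ordinary Fredholm index of $D_E$ on the compact manifold $W$. The reason is that the heat-kernel (McKean--Singer) supertrace of $e^{-t\tilde D_E^2}$, restricted to a fundamental domain, converges as $t\to0$ to the same local integrand as on $W$.

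Combining these steps, $(\tau_G)_*\mu_G[W,E,f]=\mathrm{ind}(D_E)\in\Z$, which proves the inclusion in $\Z$. For equality, take $W$ to be a point and $E$ the trivial line bundle, mapping to a point of $BG$. Its image under $\mu_G$ is $[1]\in K_0(C^*_r(G))$, and $\tau_G(1)=1$. Hence the image is exactly $\Z$.

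The main obstacle is the identification $\tau_G\circ\text{(Mishchenko index)}=L^2$-index. To get it, I would approximate the index by an idempotent built from $e^{-t\tilde D_E^2}$ in $M_n$ of the smooth, or Schwartz-type, group algebra. The von Neumann trace of this idempotent is computed by the kernel on the diagonal over a fundamental domain. Atiyah's theorem then applies at the level of these kernels.
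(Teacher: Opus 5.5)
Your argument is correct and follows the paper's route: you re-derive Proposition~\ref{BaumDouglas} the same way (torsion-freeness gives $\underline{E}G=EG$ and $K_0(BG)$, then Baum--Douglas cycles, then $\mu_G$ as the $G$-index of the lifted operator, then Atiyah's $L^2$-index theorem), and the corollary follows because surjectivity gives $K_0(C^*_r(G))=\mu_G(K_0^G(\underline{E}G))$. You use surjectivity only implicitly in your ``it therefore suffices'' step, so say it explicitly; also, $\Z\subset(\tau_G)_*(K_0(C^*_r(G)))$ holds automatically from the class $[1]$ and $\tau_G(1)=1$, with no need for the point cycle.
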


{\bf Sketch of proof of Proposition \ref{BaumDouglas}:} This proof will allow to get a first feeling of the LHS of Conjecture \ref{BCconj}. Let $BG$ be a classifying space for $G$, i.e. a CW-complex with fundamental group $G$ such that the universal cover $EG:=\widetilde{BG}$ is contractible\footnote{Topologists also say that $BG$ is a $K(G,1)$-Eilenberg-MacLane space, or an aspherical space with fundamental group $G$.}; such a space $BG$ always exists and is unique up to homotopy; see also Example \ref{universalex} below. Since $G$ is torsion-free, we may take $\underline{E}G=EG$, and since $G$ acts freely on $EG$ we have 
$$K_i^G(\underline{E}G)=K_i(BG)$$
for $i=0,1$.  Now, as we will see in Definition \ref{LHS}:
$$K_i(BG)=\lim_{X\subset BG} KK_i(C(X),\C),$$
where the limit is on the directed set of compact subsets $X$ in $BG$, and $KK_i$ denotes Kasparov bivariant K-theory (see subsection \ref{KKordinary}); for $A$ a $C^*$-algebra, $K_i(A,\C)$ is the analytical $K$-homology of $A$, see \cite{HiRo}.

By a result of P. Baum and R.G. Douglas \cite{BaDo}, an element $x\in KK_0(C(X),\C)$ can be described as a quadruple $(M,E,D,f)$ such that 
\begin{itemize}
\item $M$ is a smooth closed manifold (i.e. compact without boundary).
\item $E=E^+\oplus E^-$ is a smooth vector bundle over $M$.
\item $D:C^\infty(M,E^\pm)\rightarrow C^\infty(M,E^\mp)$ is an elliptic differential operator acting on smooth sections of $E^\pm$.
\item $f:M\rightarrow X$ is a continuous map.
\end{itemize}
At this point we observe that the triple $(M,E,D)$ defines an element $y\in KK_0(C(M),\C)$ such that $f_*(y)=x$. Ellipticity of $D$ guarantees that the spaces $Ker(D)$ and $Coker(D)$ are finite-dimensional, so we have the Fredholm index (see Definition \ref{Fred} below):
$$Ind(D):=\dim_\C Ker(D)-\dim_\C Coker(D)$$
that defines the index map $Ind: KK_0(C(M),\C)\rightarrow \Z$.

Now form the pull-back
$$\left.\begin{array}{ccc}\tilde{M} & \rightarrow & EG \\ p\downarrow &  & \downarrow \\M & \stackrel{f}{\rightarrow} & BG\end{array}\right.$$
so that $\tilde{M}$ is a proper $G$-manifold. Let $\tilde{E}:=p^*E$ denote the pulled-back vector bundle, and $\tilde{D}$ the lift of $D$, so that $\tilde{D}$ is a $G$-invariant elliptic differential operator acting on sections of $\tilde{E}$. Since $\tilde{M}$ is non-compact as soon as $G$ is infinite, classical ellipticity theorems fail, and $Ker(\tilde{D}),\,Coker(\tilde{D})$ are usually infinite-dimensional. However $Ker(\tilde{D}),\,Coker(\tilde{D})$  are modules over $C^*_r(G)$ and it is a result by A. Connes and H. Moscovici \cite{CoMo} that the formal difference of those modules provides a well-defined element $Ind_G(\tilde{D})$, called the $G$-index of $\tilde{D}$, in the K-theory of the reduced $C^*$-algebra\footnote{Although this is not true, one may morally think that $Ker(\tilde{D}),\,Coker(\tilde{D})$ are finitely generated projective modules over $C^*_r(G)$.}. Moreover 
$$Ind_G(\tilde{D})=\mu_G(x).$$
By Atiyah's $L^2$-index theorem (see \cite{Ati}):
$$(\tau_G)_*(Ind_G(\tilde{D}))=Ind(D),$$
which amounts to saying that the following diagram is commutative:
$$\left.\begin{array}{ccc}KK_0(C(M),\C) & \stackrel{\mu_G}{\longrightarrow} & K_0(C^*_r(G)) \\ & & \\Ind\downarrow&  & \downarrow(\tau_G)_* \\ & & \\ \Z & \hookrightarrow & \R\end{array}\right.$$
The result is now clear. $\blacksquare$

\subsection{A Whitehead group-type conjecture}

Recall that a group with a finite-dimensional classifying space, is necessarily torsion-free.

\begin{Conj}\label{Whitehead} Assume that $G$ admits a 2-dimensional model for $BG$. Then any invertible $T\in GL_n(A)$ is stably homotopic to a $N$-by-$N$ diagonal matrix
$$\left(\begin{array}{ccccc}g & 0 & 0 & \cdots & 0 \\0 & 1 & 0 & \cdots & 0 \\0 & 0 & 1 & \cdots & 0 \\ \vdots & \vdots & \vdots & \ddots & \vdots \\0 & 0 & 0 & \cdots & 1\end{array}\right)$$
for some $N\geq n$ and some $g\in G$.
\end{Conj}

This conjecture is again a consequence of surjectivity in Conjecture \ref{BCconj}.

\begin{Prop} Assume that $G$ admits a 2-dimensional $BG$, and $\mu_G$ is onto. Then Conjecture \ref{Whitehead} holds.
\end{Prop}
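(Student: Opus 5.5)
The plan is to compute the left-hand side $K_1^G(\underline{E}G)$ explicitly. We then show that its image under $\mu_G$ is generated by the classes $[\lambda_G(g)]\in K_1(C^*_r(G))$ for $g\in G$. Here $A$ in Conjecture \ref{Whitehead} is understood to be $C^*_r(G)$. Since $G$ admits a finite-dimensional $BG$, it is torsion-free, so as in the sketch of proof of Proposition \ref{BaumDouglas} we may take $\underline{E}G=EG$ and get $K_1^G(\underline{E}G)=K_1(BG)$. We take $BG$ to be a 2-dimensional CW-complex.

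The first step is to compute $K_1(BG)$ through the Atiyah--Hirzebruch spectral sequence, or equivalently the Chern character. Rationally one has $K_1(X)\otimes\Q\simeq\bigoplus_k H_{2k+1}(X;\Q)$. Integrally, for a CW-complex of dimension at most 2, the spectral sequence $E^2_{p,q}=H_p(X;K_q(pt))$ has no room for nonzero differentials: $d^2$ has bidegree $(-2,1)$ and lands in odd $q$, where $K_q(pt)=0$, while $d^3$ would need $p\geq 3$. Hence $K_1(BG)\simeq H_1(BG;\Z)=G/[G,G]$, and a direct limit over finite subcomplexes handles the non-compact case. To make the isomorphism concrete, we note that a class in $H_1$ is represented by a loop $f:S^1\to BG$ whose homotopy class is some $g\in G=\pi_1(BG)$. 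The corresponding K-homology element is $f_*[D_{S^1}]$, where $[D_{S^1}]\in K_1(S^1)$ is the class of the Dirac operator $-i\,d/d\theta$ on the circle. So every element of $K_1(BG)$ has the form $f_*[D_{S^1}]$ for some loop $f$ representing some $g\in G$.

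The second step is to compute $\mu_G(f_*[D_{S^1}])$. By naturality of the assembly map with respect to the group homomorphism $\Z\to G$, $1\mapsto g$, we get $\mu_G(f_*[D_{S^1}])=\iota_*(\mu_\Z[D_{S^1}])$. Here $\iota:C^*_r(\Z)\to C^*_r(G)$ is induced by $n\mapsto g^n$; it exists as a $C^*$-morphism because $\langle g\rangle$ is a subgroup, and reduced $C^*$-algebras behave well under restriction to subgroups. For $\Z$, with $E\Z=\R$ and $B\Z=S^1$, the lifted operator is $-i\,d/dx$ on $\R$. Under the Fourier identification $C^*_r(\Z)=C(S^1)$, its $\Z$-index is the generator of $K_1(C(S^1))=\Z$, namely the class of the unitary $z$, which is $\lambda_\Z(1)$. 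Hence $\mu_G(f_*[D_{S^1}])=\pm[\lambda_G(g)]$. This is the step I expect to be the main obstacle. It requires the actual definition of $\mu_G$, which comes later in the paper, together with a compatibility with induction from subgroups and a Fourier-transform computation for $\Z$. I would first try to reduce it to the statement that $\mu_\Z$ is the Fourier/Bott isomorphism $K_1(S^1)\to K_1(C(S^1))$.

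Finally, we combine the two steps. Surjectivity of $\mu_G$ gives that every class in $K_1(C^*_r(G))$ equals $[\lambda_G(g)]$ for some $g$; signs are absorbed since $[\lambda_G(g)]^{-1}=[\lambda_G(g^{-1})]$. Sums are also of this form because $g\mapsto[\lambda_G(g)]$ is a homomorphism: by the rotation homotopy of Proposition \ref{K1abelian}, $\mathrm{diag}(\lambda_G(g),\lambda_G(h))$ is homotopic to $\mathrm{diag}(\lambda_G(gh),1)$. Now take $T\in GL_n(C^*_r(G))$. Its class $[T]$ equals $[\lambda_G(g)]$ for some $g$. By the definition of $K_1$ as the direct limit of $GL_n/GL_n^0$, equality of classes means exactly that $T$ and $\mathrm{diag}(\lambda_G(g),1,\dots,1)$ are homotopic in some $GL_N$. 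This is the stable homotopy asserted in Conjecture \ref{Whitehead}.
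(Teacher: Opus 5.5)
Your proposal is correct and follows the paper's route: you identify $K_1^G(\underline{E}G)$ with $K_1(BG)\cong H_1(BG;\Z)=G/[G,G]$ using that $BG$ is 2-dimensional, show that under this identification $\mu_G$ becomes $g\mapsto[\lambda_G(g)]$, and then unwind the definition of $K_1$ as $\lim GL_n/GL_n^0$. The only difference is that the paper cites Matthey for the commutativity of the square $\mu_G=\iota\circ Ch_*$, whereas you derive it from functoriality of assembly along $\Z\to G$, $1\mapsto g$ (injective for $g\neq e$ by torsion-freeness, the case $g=e$ being trivial), combined with the computation $\mu_\Z[c]=[t]$ stated in the paper's final section.
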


{\bf Sketch of proof:} As $G$ is discrete, the canonical map
$$G\rightarrow C^*_r(G):g\mapsto\lambda_G(g)$$
embeds $G$ into the unitary group $U_1(C^*_r(G))$. Composing with the canonical map $U_1(C^*_r(G))\rightarrow K_1(C^*_r(G))$, and using commutativity of $K_1$ (Proposition \ref{K1abelian}), we get the canonical map $\iota:G/[G,G]\rightarrow K_1(C^*_r(G))$.

Since $G$ is torsion-free, as in the proof of Proposition \ref{BaumDouglas} , the  LHS of Conjecture \ref{BCconj} can be replaced by $K_i(BG)$. As $BG$ is 2-dimensional, we have a Chern character $Ch_*:K_1(BG)\rightarrow H_1(BG,\Z)$ which is an isomorphism, where $H_1$ denotes the usual cellular homology. Moreover the following diagram is commutative (see e.g. \cite{Matthey}):
$$\left.\begin{array}{ccc}K_1(BG) & \stackrel{\mu_G}{\longrightarrow} & K_1(C^*_r(G) \\ &  &  \\Ch_*\downarrow &  & \uparrow\iota \\ &  &  \\H_1(BG,\Z) & = & G/[G,G]\end{array}\right.$$
Since $Ch_*$ is an isomorphism and $\mu_G$ is assumed to be onto, the map $\iota$ is onto, which concludes the proof. $\blacksquare$

\begin{Rem} The classical Whitehead group of a discrete group $G$ is defined as $Wh(G):= K_1^{alg}(\Z G)/\langle\pm g: g\in G\rangle$, where $K_1^{alg}(\Z G)$ denotes the algebraic K-theory of the integral group ring of $G$. Define in a similar way the analytical Whitehead group of $G$ as $Wh^{an}(G):=K_1(C^*_r(G))/Im(\iota)$; then Conjecture \ref{Whitehead} is equivalent to the vanishing of $Wh^{an}(G)$ whenever $G$ has a 2-dimensional $BG$.
\end{Rem}

\section{Proper actions}

Reference for this section: \cite{BCH94}.

Here we come back to our standard assumption on $G$: a locally compact, second countable group. 

\subsection{Definition and examples}

Let $X$ be a metrizable space.  An {\it action} $G\curvearrowright X$ of $G$ on $X$ is a homomorphism from $G$ to the homeomorphism group of $X$ such that the action map $G\times X\rightarrow X:(g,x)\mapsto g\cdot x$ is continuous.

\begin{Def}\label{properaction} An action $G\curvearrowright X$ is {\bf proper} if for every point $x\in X$ there exists a $G$-invariant open neighborhood $U$ of $x$, a compact subgroup $H$ of $G$ and a $G$-equivariant continuous map $f_x:U\rightarrow G/H$.
\end{Def}

So a proper action is a an action locally modeled on a homogeneous space $G/H$ with compact isotropy $H$.

\begin{Ex}\begin{enumerate}
\item Any action of a compact group $G$ on any space $X$, is proper: take $U=X,\,H=G$ and $f_x$ the constant map to the one-point space.
\item Let $D_\infty$ denote the infinite dihedral group, i.e. the group given by the presentation
$$D_\infty=\langle s,t|s^2=t^2=e\rangle$$
(so that $D_\infty$ is the free product of two copies of $\Z/2\Z$). Let $D_\infty$ act on the real line by setting $s(x)=-x$ (symmetry about 0) and $t(x)=2-x$ (symmetry about 1). There are now three cases:
\begin{itemize}
\item For $x\in\R\setminus\Z$, we may take $U=\R\setminus\Z, \,H=\{e\}$; then, denoting by $U_x$ the connected component of $x$ in $U$, every connected component of $U$ is of the form $gU_x$ for a unique $g\in D_\infty$. So we may take as $f_x:U\rightarrow D_\infty$ the map that sends $gU_x$ to $g$.
\item For $x\in 2\Z$, assume first $x=0$. Then take $U=\R\setminus(2\Z+1)$ and $H=\langle s\rangle$. Denote by $U_{2n}$ the connected component of $U$ through $2n$; observe that there are exactly two elements $g\in D_\infty$ such that $gU_0=U_{2n}$, namely $g=(ts)^n$ and $g=(ts)^ns$. This defines the map $f_0:U\rightarrow D_\infty/\langle s\rangle$. For a general $x\in 2\Z$, just use conjugation.
\item For $x\in 2\Z+1$, assume first $x=1$, set $U=\R\setminus 2\Z$ and $H=\langle t\rangle$, and proceed analogously. 
\end{itemize}
\item For $G$ discrete, consider a covering $\begin{array}{l}X \\ \downarrow G \\Y\end{array}$ with group $G$: local triviality allows to write $X$ locally as $V\times G$, where $V$ is an open set in the basis $Y$. Here the action $G\curvearrowright X$ is both proper and free.
\item If $X$ is a CW-complex and $G\curvearrowright X$ is cellular, the action is proper if and only if stabilizers of vertices of $X$ are compact.
\end{enumerate}
\end{Ex}

\subsection{The classifying space for proper actions}

\begin{Def}\cite{BCH94} A {\bf universal example} for $G$-proper actions is a proper $G$-space $\underline{E}G$ such that for any proper $G$-space $X$ there exists a continuous $G$-equivariant map $f:X\rightarrow \underline{E}G$, and any two such maps are $G$-equivariantly homotopic.
\end{Def}

In more categorical language: $\underline{E}G$ is a final object in the category of proper $G$-spaces and $G$-equivariant maps taken up to $G$-equivariant homotopy.

If $\underline{E}G,\,(\underline{E}G)'$ are two universal examples for $G$-proper actions, let $f:(\underline{E}G)'\rightarrow\underline{E}G$ and $f':\underline{E}G\rightarrow(\underline{E}G)'$ be the $G$-equivariant maps given by the universal property. By uniqueness up to homotopy, $f\circ f'$ is homotopic to the identity of $\underline{E}G$, and $f'\circ f$ is homotopic to the identity of $(\underline{E}G)'$. Hence $\underline{E}G$ is unique up to $G$-equivariant homotopy.  By a slight abuse of terminology, we call $\underline{E}G$ {\it ``the'' classifying space for proper actions} of $G$.

An intrinsic characterization of universal examples is given by the next result.

\begin{Prop}\label{universal} A proper $G$-space $Y$ is a universal example for $G$-proper actions if and only if the two following conditions are satisfied:
\begin{enumerate}
\item For every compact subgroup $H$ of $G$, the fixed point set $Y^H$ is non-empty.
\item Equip $Y\times Y$ with the diagonal $G$-action; let $p,p':Y\times Y\rightarrow Y$ denote the two projections. Then $p,\,p'$ are $G$-equivariantly homotopic.
\end{enumerate}\end{Prop}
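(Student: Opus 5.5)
We prove the two implications separately, using the local structure of proper actions from Definition \ref{properaction} for the harder one.

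\emph{Necessity.} Suppose $Y$ is a universal example. Let $H$ be a compact subgroup of $G$. The homogeneous space $G/H$ is a proper $G$-space: take $U=G/H$ and $f_x$ the identity. The universal property then gives a $G$-equivariant map $f:G/H\rightarrow Y$. The point $y=f(eH)$ satisfies $hy=f(hH)=f(eH)=y$ for every $h\in H$, so $Y^H\neq\emptyset$. For condition 2, observe that $Y\times Y$ with the diagonal action is again proper: it admits the equivariant map $p$ to the proper space $Y$, and the property in Definition \ref{properaction} pulls back along equivariant maps (take $p^{-1}(U)$ and $f_x\circ p$). The maps $p$ and $p'$ are two $G$-equivariant maps $Y\times Y\rightarrow Y$, so by uniqueness in the universal property they are $G$-equivariantly homotopic.

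\emph{Sufficiency, uniqueness.} Assume conditions 1 and 2 hold. Uniqueness up to homotopy follows directly from condition 2. Given two equivariant maps $f_0,f_1:X\rightarrow Y$, the map $(f_0,f_1):X\rightarrow Y\times Y$ is equivariant. Composing it with an equivariant homotopy $p\simeq p'$ gives an equivariant homotopy $f_0=p\circ(f_0,f_1)\simeq p'\circ(f_0,f_1)=f_1$.

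\emph{Sufficiency, existence.} This is the main step. Let $X$ be a proper $G$-space.

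\begin{itemize}
\item \emph{Local maps.} Each point $x\in X$ has a $G$-invariant open neighbourhood $U_x$ and an equivariant map $f_x:U_x\rightarrow G/H_x$ with $H_x$ compact. By condition 1 we can choose a point $y_x\in Y^{H_x}$. The map $gH_x\mapsto gy_x$ is then a well-defined equivariant map $G/H_x\rightarrow Y$. Composing it with $f_x$ gives an equivariant map $\phi_x:U_x\rightarrow Y$.
\item \emph{Locally finite cover.} The sets $U_x$ are $G$-invariant, so they descend to an open cover of $X/G$. Since $X$ is metrizable, $X/G$ should be paracompact, so we may pass to a locally finite refinement indexed by $i\in I$. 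This gives $G$-invariant open sets $U_i$, equivariant maps $\phi_i:U_i\rightarrow Y$, and a subordinate partition of unity $(\psi_i)$ by $G$-invariant functions, obtained by pulling back a partition of unity on $X/G$.
\item \emph{Gluing.} Here condition 2 enters. It yields an equivariant homotopy $h:Y\times Y\times[0,1]\rightarrow Y$ from $p$ to $p'$, which acts as an equivariant "path-connecting" operation. To combine two maps we set $\Phi(x)=h(\phi_1(x),\phi_2(x),t(x))$ on $U_1\cup U_2$, with $t=\psi_2/(\psi_1+\psi_2)$. Near points where only $U_1$ is relevant we use $\phi_1$ alone, and symmetrically for $U_2$. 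We then iterate over a well-ordering of $I$, exactly as in Milnor's join construction or the standard construction of maps into $EG$. Because every $\psi_i$ is $G$-invariant, all the maps constructed this way are equivariant.
\end{itemize}

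\emph{Main obstacle.} The difficult point is making the iterated gluing rigorous. Two things must be checked. First, continuity at the boundaries of the sets, which requires shrinking the cover so that closures are used. Second, well-definedness of the infinite iteration, which relies on local finiteness; it may be cleanest to organise the $U_i$ into countably many disjoint families, using second countability. If this inductive gluing becomes unwieldy, a cleaner alternative is the following. Condition 2 makes $Y$ equivariantly homotopy-equivalent to a "convex" model $Y'$ in which convex combinations are defined, for instance a space of probability measures. In such a model the maps $\phi_i$ can be combined directly as $\sum_i\psi_i\phi_i$.
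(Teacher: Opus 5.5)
Your necessity argument is the paper's proof; the paper proves only that direction, and you check more carefully than it does that $G/H$ and $Y\times Y$ are proper. Your deduction of uniqueness up to homotopy from condition 2 is also correct.

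The gap is the existence of a $G$-map $X\to Y$. It starts with the claim that $G\backslash X$ is paracompact because $X$ is metrizable. This does not follow from Definition \ref{properaction}, and without such a hypothesis the ``if'' direction is false.

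Let $\Z$ act on $X=\R^2\setminus\{0\}$ by $n\cdot(x,y)=(2^nx,2^{-n}y)$. A point $(a,b)$ with $a\neq 0$ lies in the strip $S=\{(x,y)\in X:\,2^{-1/2}<x/a<2^{1/2}\}$. Its translates $n\cdot S=\{2^{n-1/2}<x/a<2^{n+1/2}\}$ are pairwise disjoint, so $\bigcup_n n\cdot S$ is an invariant open neighbourhood of $(a,b)$ carrying the equivariant map $n\cdot S\mapsto n\in\Z$. Use $y/b$ instead if $a=0$. Hence $X$ is a proper $\Z$-space in the sense of the notes, even though $X/\Z$ is not Hausdorff.

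Now suppose $f:X\to\R$ were equivariant for the translation action. Take $q_k=(1,2^{-k})\to(1,0)$, so that $(-k)\cdot q_k=(2^{-k},1)\to(0,1)$. Then $f(0,1)=\lim_k\bigl(f(q_k)-k\bigr)=-\infty$, a contradiction. Yet $Y=\R$ satisfies conditions 1 and 2. So sufficiency needs an extra standing hypothesis, such as $G\backslash X$ being paracompact Hausdorff, which you must assume rather than derive.

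Even granting that hypothesis, the gluing has to be carried out, and the natural iteration of your formula, $t=\psi_{\alpha+1}/\sum_{i\le\alpha+1}\psi_i$, is not continuous. At a point where $\sum_{i\le\alpha}\psi_i$ vanishes but $\psi_{\alpha+1}$ does not, $t\to 1$ while the first argument $\Phi_\alpha(x)$ of $h$ is uncontrolled. Since $h(y,y',t)\to y'$ as $t\to1$ only locally uniformly in $y$, the limit can fail.

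The standard repair is as follows.
\begin{enumerate}
\item Choose invariant open shrinkings $\overline{V''_i}\subset V'_i\subset\overline{V'_i}\subset U_i$, with the $V''_i$ still covering $X$.
\item Choose invariant Urysohn functions $\eta_i$ equal to $1$ on $\overline{V''_i}$ and $0$ off $V'_i$.
\item On $B_{\alpha+1}=B_\alpha\cup V''_{\alpha+1}$, set $\Phi_{\alpha+1}=h(\Phi_\alpha,\phi_{\alpha+1},\eta_{\alpha+1})$ on $B_\alpha\cap U_{\alpha+1}$, $\Phi_{\alpha+1}=\Phi_\alpha$ on $B_\alpha\setminus\overline{V'_{\alpha+1}}$, and $\Phi_{\alpha+1}=\phi_{\alpha+1}$ on $V''_{\alpha+1}$.
\end{enumerate}
These three open pieces cover $B_{\alpha+1}$, and the definitions agree on overlaps. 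Since $\Phi_{\alpha+1}$ differs from $\Phi_\alpha$ only inside $V'_{\alpha+1}$, local finiteness makes the transfinite limit locally stationary. This gives a continuous $G$-map $X\to Y$.

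Your ``convex model'' alternative does not avoid this step. A $G$-homotopy equivalence $Y\simeq Y'$ presupposes a $G$-map $Y'\to Y$, which is the existence statement for $X=Y'$. Such a map would have to be built separately, for instance cell by cell on a $G$-CW model. That construction can use the fact that conditions 1 and 2 make every $Y^H$ contractible, via $y\mapsto h(y_0,y,t)$ with $y_0\in Y^H$.
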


{\bf Proof of necessity:} If $Y$ is a universal example, then:
\begin{enumerate}
\item If $H$ is a compact subgroup, then $G/H$ is a proper $G$-space, so there exists a $G$-equivariant map $f:G/H\rightarrow Y$. Since the coset $eH$ is $H$-fixed in $G/H$, we have $f(eH)\in Y^H$.
\item The diagonal $G$-action on $Y\times Y$ is proper. Since the maps $p,\,p'$ are $G$-equivariant, by the universal property of $Y$ they must be $G$-equivariantly homotopic. $\blacksquare$
\end{enumerate}

\begin{Cor} If $H$ is any closed subgroup of $G$, then $\underline{E}G$ is a universal example for $H$-proper actions.
\end{Cor}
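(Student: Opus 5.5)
We will apply the intrinsic characterization of Proposition \ref{universal} to the restricted action $H\curvearrowright \underline{E}G$. The plan has three steps: show that the restricted action is proper, and then check the two conditions of Proposition \ref{universal} with $H$ in place of $G$. Note that only the necessity direction of Proposition \ref{universal} has been proved above. So we either take the sufficiency direction as given, since the statement is stated earlier, or note that the corollary genuinely depends on it.

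\emph{Step 1 (properness).} Let $x\in\underline{E}G$. Definition \ref{properaction} gives a $G$-invariant open neighborhood $U$ of $x$, a compact subgroup $K\le G$ and a $G$-equivariant map $f:U\to G/K$. The set $U$ is $H$-invariant, and $f$ is $H$-equivariant. It remains to show that $G/K$ is a proper $H$-space, because a composition of $H$-equivariant maps $U\to G/K\to H/L$ on an $H$-invariant neighborhood of $f(x)$ pulls back to the required local model. This is the main obstacle. For discrete $H$ (or $G$) it is straightforward: the $H$-orbit of $gK$ is $H$-equivariantly identified with $H/(H\cap gKg^{-1})$, the stabilizer $H\cap gKg^{-1}$ is compact, and the orbits are open when $G$ is discrete. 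In general, we first try to use the equivalent characterization of properness for metrizable spaces: the map $H\times G/K\to G/K\times G/K$ is proper, which follows from the fact that $G\times G/K\to G/K\times G/K$ is proper and $H$ is closed in $G$. We then invoke the slice theorem (Abels/Palais) to recover local models $H/L$. Alternatively, since the notes work at the level of \cite{BCH94}, we may simply cite that restriction to a closed subgroup preserves properness.

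\emph{Step 2 (fixed points).} If $L$ is a compact subgroup of $H$, then $L$ is also a compact subgroup of $G$. Condition 1 of Proposition \ref{universal} for $G$ therefore gives $(\underline{E}G)^L\neq\emptyset$.

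\emph{Step 3 (homotopy of projections).} By condition 2 for $G$, there is a $G$-equivariant homotopy $p\simeq p'$ on $\underline{E}G\times\underline{E}G$. Any $G$-equivariant homotopy is in particular $H$-equivariant, so $p$ and $p'$ are $H$-equivariantly homotopic. Proposition \ref{universal}, applied to the group $H$ (which is also locally compact and second countable, being a closed subgroup), then shows that $\underline{E}G$ is a universal example for $H$-proper actions.
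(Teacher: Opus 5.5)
Your proposal is correct and follows the paper's intended route: the corollary is meant as an immediate consequence of the sufficiency half of Proposition \ref{universal}, since compact subgroups of $H$ are compact subgroups of $G$ and $G$-equivariant homotopies are $H$-equivariant. The paper leaves the properness of the restricted $H$-action implicit, and your reduction to the $H$-space $G/K$ plus a slice theorem (or a citation of \cite{BCH94}) is a reasonable way to fill that in; the only slip is that your open-orbit argument covers discrete $G$, not merely discrete $H$ (for instance, $\Z$-orbits in $\R$ are not open), but that side remark is not needed for the general argument.
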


\begin{Prop} Every locally compact, second countable group $G$ admits a universal example for $G$-proper actions.
\end{Prop}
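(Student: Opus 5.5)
We construct an explicit model, following \cite{BCH94}, and check it with the intrinsic criterion of Proposition \ref{universal}. Since the paper only proves necessity in Proposition \ref{universal}, we will check the universal property directly, using the same two ingredients that the criterion encodes.

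The model is the space $Y$ of compactly supported positive Radon measures (or, for discrete $G$, finitely supported probability measures) on $G/K$ for compact subgroups $K$. Concretely, take $Y$ to be the set of functions $\phi\in C_c(G)$ with $\phi\geq 0$ and $\int_G\phi\,dt=1$, quotiented suitably. A cleaner choice is to let $Y$ be the set of probability measures $m$ on $G$ with compact support. We give $Y$ a topology for which $G$ (acting by left translation) acts continuously and which makes convex combinations continuous. For discrete $G$ this is the geometric realization of the simplex on $G$ restricted to finite subsets, with the weak (CW) topology.

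Properness comes first. Given $m\in Y$ with support $S$ compact, the set $\{g:gS\cap S\neq\emptyset\}$ is compact. Using this, we build a $G$-invariant neighbourhood $U$ of $m$ and a compact subgroup $H$ with an equivariant map $U\to G/H$. The subgroup $H$ is a stabilizer of a point near $m$, obtained by averaging over the compact group generated by the near-stabilizer. The cleanest route is Palais' slice theorem applied to the action of $G$ on $Y$, after showing that the action is proper in the Bourbaki sense, namely that $G\times Y\to Y\times Y$ is proper. Bourbaki properness follows from the compact support condition. We expect this slice step, i.e.\ passing from Bourbaki properness to Definition \ref{properaction} for a nondiscrete locally compact group, to be the main obstacle. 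We would invoke Palais (or Abels) slice theorem for proper actions of locally compact groups on completely regular spaces, after restricting to an open almost-connected subgroup if necessary.

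Next come fixed points and the universal property. For a compact subgroup $H$, the normalized Haar measure of $H$, pushed into $G$, is an $H$-fixed point of $Y$, which gives condition 1. Condition 2 holds because $Y$ is convex: $(m,m',t)\mapsto(1-t)m+tm'$ is an equivariant homotopy between the two projections $p,p'$. For the universal property, let $X$ be any proper $G$-space. Using Definition \ref{properaction}, we cover $X/G$ by the images of the sets $U$ carrying maps $f_x:U\to G/H_x$. Since $X$ is metrizable, hence paracompact, and $X/G$ is then paracompact, we choose a $G$-invariant locally finite partition of unity $(\psi_i)$ subordinate to this cover. We then set
$$f(x)=\sum_i\psi_i(x)\,\sigma_i(x),$$
where $\sigma_i(x)\in Y$ is the pushforward of the Haar measure of $H_i$ under $f_i(x)\in G/H_i$. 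This map is continuous and equivariant. Uniqueness up to homotopy is again given by the straight-line homotopy $(1-t)f+tf'$, which is equivariant by convexity of $Y$.
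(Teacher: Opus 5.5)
Your plan is sound, and checking the universal property directly is better than what the notes do. The model is compactly supported probability measures on $G$, properness comes from a slice theorem, and universality is verified via an invariant partition of unity and convexity. The notes, by contrast, treat only countable discrete $G$ (sup metric on finitely supported probability measures) and conclude via the sufficiency half of Proposition~\ref{universal}, which they do not prove. However, two steps of your argument are not established.

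First, you never fix the topology on $Y$, and your reason for properness (compact supports) cannot work in any topology in which your straight-line homotopies and your map $f$ are continuous. For every $g\in G$ one has $(1-t)m+t\delta_g\to m$ as $t\to 0$, so every neighbourhood of $m$ contains measures charging $g$. Hence compactness of $\{g: gS\cap S\neq\emptyset\}$ for $S=\mathrm{supp}(m)$ says nothing about a neighbourhood of $m$. Conversely, a topology in which supports vary upper semicontinuously makes $t\mapsto(1-t)m+t\delta_g$ discontinuous at $t=0$ whenever $g\notin\mathrm{supp}(m)$.

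What works is the narrow (weak-$*$) topology, which is metrizable since $G$ is Polish and makes translations and convex combinations continuous. The argument must then use mass rather than support. If $V$ is a relatively compact open set containing $\mathrm{supp}(m)$, then $W_m=\{m': m'(V)>1/2\}$ is an open neighbourhood of $m$ (portmanteau). Moreover, $gW_m\cap W_{m''}\neq\emptyset$ forces $gV\cap V''\neq\emptyset$, i.e.\ $g\in\overline{V''}\,\overline{V}^{-1}$. Only after this can you invoke the slice theorem to reach Definition~\ref{properaction}: Palais for Lie groups, Abels for general locally compact groups, and it is elementary for discrete $G$. Also, the CW topology you propose for discrete $G$ is not metrizable, since the infinite simplex is not locally finite, so it is not an admissible $G$-space in these notes. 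The sup metric of the notes is admissible, and on probability measures it induces the narrow topology.

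Second, the step ``$X$ metrizable, hence $G\backslash X$ paracompact'' is unjustified. Open images of paracompact spaces need not be paracompact, and with Definition~\ref{properaction} as stated, $G\backslash X$ need not even be Hausdorff. Take $\Z$ acting on $\R^2\setminus\{0\}$ through $A=\mathrm{diag}(2,1/2)$. It satisfies Definition~\ref{properaction} with trivial $H$ at every point: a small disc $W$ around any point has pairwise disjoint translates $A^nW$, and $A^nW\mapsto n$ gives an equivariant map $\bigcup_n A^nW\to\Z$. Yet $p_k=(2^{-k},1)\to(0,1)$ while $A^kp_k=(1,2^{-k})\to(1,0)$, so these two orbits cannot be separated. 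Consequently there is no equivariant map from this space into any Bourbaki-proper space, in particular into your $Y$ or into the model of the notes. You must therefore either include Hausdorffness and paracompactness of $G\backslash X$ in the definition of a proper $G$-space, or derive them from Bourbaki properness of $X$. In the latter case Hausdorffness is immediate, but paracompactness needs a metrizability theorem for orbit spaces of proper actions. The same hidden hypothesis is needed for the sufficiency half of Proposition~\ref{universal}, so the notes' own argument relies on it too. With these two repairs, and modulo the slice theorem, your argument proves the statement for every second countable locally compact group.
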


{\bf Proof when $G$ is discrete, countable:} Define $\underline{E}G$ as the set of finitely supported probability measures on $G$, with the distance 
$$d(\mu,\nu)=\|\mu-\nu\|_\infty=\sup_{g\in G}|\mu(g)-\nu(g)|,$$
and the $G$-action by left translations. It is clear that this is a proper $G$-space. Let us check that it satisfies the two conditions in Proposition \ref{universal}.
\begin{enumerate}
\item If $H$ is a finite subgroup of $G$, then the uniform probability on $H$ is $H$-fixed.
\item For $0\leq t\leq 1$, the map $(\mu,\nu)\mapsto (1-t)\mu+t\nu$ provides a $G$-equivariant homotopy between $p$ (at $t=0$) and $p'$ (at $t=1$). $\blacksquare$
\end{enumerate}

\begin{Ex}\label{universalex}\begin{enumerate}
\item If $G$ is compact, then one may take $\underline{E}G$ to be a point. To anticipate on Example \ref{compactgroup}, in this case $K_0^G(\underline{E}G)=R(G)$, the additive group of the representation ring of $G$ (equivalently, the free abelian group on the dual $\hat{G}$); and $K_1^G(\underline{E}G)=0$.
\item If $G$ is a connected Lie group, let $K$ be a maximal compact subgroup of $G$ (recall that $K$ is unique up to conjugacy), we may take $\underline{E}G=G/K$. In particular, if $G$ is semi-simple with finite center, $G/K$ is the Riemannian symmetric space associated with $G$.
\item Let $F$ be a non-Archimedean local field (e.g. $F=\Q_p$, the $p$-adic numbers), and let $G$ be a semi-simple algebraic group over $F$ (e.g. $G=SL_n(F)$). Then we may take for $\underline{E}G$ the Bruhat-Tits building associated with $G$.
\item Let $G$ be a discrete torsion-free group, so that any proper action is also free. Denoting by $EG$ a contractible free $G$-space\footnote{Quoting from A. Connes: ``$EG$ is a point on which $G$ acts freely!''}, we may take $\underline{E}G=EG$.  Setting $BG:=EG/G$, we see that $EG$ is the universal cover of $BG$, so that $BG$ is the classifying space for $G$. Then $K_i^G(\underline{E}G)=K_i(BG)\,(i=0,1)$.
\item Let $G$ be a countable locally finite group, i.e. $G$ is a countable increasing union of finite subgroups $F_n$, say $G=\cup_{n\geq 0}F_n$. Then $\underline{E}G$ can be taken as a tree with vertex set $\coprod_{n\geq 0}G/F_n$ and edges of the form $\{gF_n,gF_{n+1}\}$, for $g\in G$. Observe that this tree fibers over $\N$.
\end{enumerate}\end{Ex}

\section{The LHS of Conjecture \ref{BCconj}}

\subsection{Motivation 1: K-homology vs. K-theory}

Topological K-theory is a cohomology theory on spaces. For a compact space $X$, the group $K^0(X)$ is the group of formal differences of isomorphism classes of complex vector bundles on $X$, as we saw already in Example \ref{ExK0}:
$$K^0(X)=\{[E]-[F]:E,F \;\mbox{vector bundles over}\; X\}.$$
For abstract reasons, to any cohomology theory there is a dual homology, defined by Spanier-Whitehead duality. The dual of K-theory is called {\it K-homology}. 

More precisely, for $X$ a finite CW-complex, one may embed $X$ into a $2N$-dimensional sphere $S^{2N}$ for $N\gg 0$, and one defines K-homology of $X$ as K-theory of $S^{2N}\setminus U$, where $U$ is a tubular neighborhood of $X$ in $S^{2N}$. However this definition is not very satisfactory, as it is not clear that it is well-defined: indeed it must be checked that this definition is independent of $N$ and independent of the choice of the embedding; moreover functoriality is not apparent. Therefore a more intrinsic definition is desirable.

\subsection{Motivation 2: Atiyah's $Ell(X)$}\label{AtiyahEll}

Atiyah \cite{AtiEll} suggested a direct construction cycles in K-homology.

\subsubsection{Fredholm operators}

Let $\HH$ be a Hilbert space (infinite-dimensional, separable) and $\mathcal{B}(\HH)$ denote the $C^*$-algebra of bounded, linear operators on $\HH$.

\begin{Def}\label{Fred} An operator $T\in\mathcal{B}(\HH)$ is a {\bf Fredholm operator} if the image $Im(T)$ is closed and the spaces $Ker(T)$ and $Coker(T):=\HH/Im(T)$ are finite-dimensional. The {\bf index} of $T$ is then defined as $$Ind(T)=\dim_\C Ker(T)-\dim_\C Coker(T).$$
\end{Def}

Equivalently, a bounded operator $T$ is Fredhom if $T$ is invertible modulo compact operators (Atkinson's theorem). If $T$ is Fredholm, then $Coker(T)$ identifies with $Im(T)^\perp=Ker(T^*)$.

Denote by $Fred(\HH)$ the set of Fredholm operators on $\HH$, endowed with the operator norm topology. Observe that, for $S,T\in Fred(\HH)$, we have $S\oplus T\in Fred(\HH\oplus\HH)$ and $Ind(S\oplus T)=Ind(S)+Ind(T)$. For a compact space $X$, denote by $[X,Fred(\HH)]$ the set of homotopy classes of continuous maps $X\rightarrow Fred(\HH)$. Composing direct sum with any identification $\HH\oplus\HH\simeq\HH$, we equip $[X,Fred(\HH)]$ with the structure of a commutative monoid.

\begin{Thm}\label{AtiyahK0}(Atiyah) For $X$ compact, $K^0(X)\simeq [X,Fred(\HH)]$ as commutative monoids.
\end{Thm}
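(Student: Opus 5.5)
The plan is to build the \emph{family index} map $Ind:[X,Fred(\HH)]\rightarrow K^0(X)$ and show that it is a well-defined isomorphism of monoids. The image will turn out to be all of $K^0(X)$, since every formal difference occurs.

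\textbf{Step 1: defining the index bundle.} Let $T:X\rightarrow Fred(\HH)$ be continuous. Fix $x_0\in X$. Because $T_{x_0}$ has closed image and finite-dimensional cokernel, there is a finite-dimensional subspace $V\subset\HH$ with $Im(T_{x_0})+V=\HH$. Surjectivity onto $\HH/V$ is an open condition. So the composite $\HH\xrightarrow{T_x}\HH\rightarrow\HH/V$ stays surjective for $x$ near $x_0$. By compactness of $X$ we can pick one $V$ (a finite sum of such subspaces) that works for every $x\in X$. The spaces $Ker(\pi_V\circ T_x)$ then have locally constant finite dimension, namely $\dim Ker(T_x)+\dim V-\dim Coker(T_x)$, which is constant since the Fredholm index is locally constant. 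They vary continuously and so form a vector bundle $E_T$ over $X$. We set
$$Ind(T):=[E_T]-[X\times V]\in K^0(X).$$

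\textbf{Step 2: well-definedness and additivity.} Independence of the choice of $V$ follows by comparing $V$ with a larger $V'\supset V$. This gives $E'_T\simeq E_T\oplus (X\times V'/V)$, and the difference is unchanged. Homotopy invariance follows by applying the same construction on $X\times[0,1]$ and using that vector bundles over $X\times[0,1]$ restrict to isomorphic bundles at the two ends. Additivity under $\oplus$ is clear: take $V\oplus V'$ as the auxiliary space for $S\oplus T$. So $Ind$ is a homomorphism of monoids.

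\textbf{Step 3: surjectivity.} Let $E,F$ be vector bundles with $F\oplus F^\perp\simeq X\times\C^n$ (the Serre--Swan complement). Write $[E]-[F]=[E\oplus F^\perp]-[X\times\C^n]$. Embed $E\oplus F^\perp$ as a subbundle of $X\times\C^N\subset X\times\HH$ with projection $P_x$. Let $S$ be the unilateral shift by $N$ on $\HH=\C^N\otimes\ell^2$, and take $T_x:=(1-P_x)+P_xS^*$, or a similar compact perturbation. Then the kernel bundle is $E\oplus F^\perp$ up to a trivial bundle, and we can reach any prescribed class. A simpler alternative is to realize $[E]-[X\times\C^n]$ by an operator that is a Toeplitz-type combination with kernel $E$ and cokernel $\C^n$.

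\textbf{Step 4: injectivity (the main obstacle).} Suppose $Ind(T)=0$. After stabilizing $V$ we may assume $E_T\simeq X\times V$. We then deform $T$ so that $T_x$ is surjective for all $x$ and $Ker(T_x)=E_T$ is trivial. Next we homotope $T$ to a family with a constant kernel $V_0$ and a fixed complement. On that complement the family consists of invertibles.

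At this point we invoke Kuiper's theorem: the group $GL(\HH)$ is contractible. This makes the family homotopic to a constant map, so it is trivial in $[X,Fred(\HH)]$.

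Kuiper's theorem is the genuinely hard analytic input, and we would quote it rather than prove it. The remaining work is to check that $[X,Fred(\HH)]$, defined as a monoid, actually has inverses, and hence that the map is injective and not just "stably injective". Injectivity of a monoid map does not follow from a trivial kernel alone. So we would also apply the Kuiper argument to show that $T$ and $T'$ with equal index become homotopic, by comparing $T\oplus$(parametrix of $T'$) with the identity. We would then use that $\oplus\,1$ does not change the homotopy class, since $\HH\oplus\HH\simeq\HH$ and $U(\HH)$ is connected.
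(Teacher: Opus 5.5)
Your route is the standard Atiyah--J\"anich argument that the paper only alludes to: stabilize by a finite-dimensional $V$ so that $T_x^{-1}(V)$ is a bundle, deform index-zero families into $GL(\HH)$, and apply Kuiper. (The paper itself only gives the kernel/cokernel idea and refers to Atiyah's book.) Steps 1 and 2 are fine.

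Two intermediate steps are wrong as stated, though both are easily repaired.

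\textbf{Step 3.} If $P_x$ is the finite-rank projection onto $E'_x=(E\oplus F^\perp)_x\subset\C^N\subset\HH$, then $(1-P_x)+P_xS^*=1+P_x(S^*-1)$ is joined to $1$ by $1+tP_x(S^*-1)$. So its index bundle is $0$ whatever $S$ is. More generally, no compact perturbation of the identity can realize a nonzero class. Use instead $T_x=(1-P_x)\otimes 1+P_x\otimes S^*$ on $\C^N\otimes\ell^2$, with $P_x\in M_N(\C)$ and $S$ the unilateral shift. This family is onto with kernel $E'_x\otimes e_0$, and taking its direct sum with a constant operator of index $-n$ gives $[E]-[F]$.

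\textbf{Step 4.} You cannot \emph{homotope} an index-zero family to a surjective one with kernel bundle $E_T\cong X\times V$, because that family would have index $[X\times V]\neq 0$. The correct move is $T_x\simeq P_{V^\perp}T_x\simeq P_{V^\perp}T_x+\phi_xP_{E_x}$, through finite-rank perturbations, where $\phi:E_T\to X\times V$ is the trivialization. The last family is block-diagonal with invertible blocks $E_x^\perp\to V^\perp$ and $E_x\to V$, so it lands in $GL(\HH)$ and Kuiper finishes. No constant-kernel step is needed.

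\textbf{The genuine gap is the last step.} Connectedness of $U(\HH)$ only shows that the class of $W(C\oplus 1)W^*$ does not depend on the unitary $W:\HH\oplus\HH\to\HH$. It does not show that this class equals $[C]$. Already for $X$ a point, that claim is a nontrivial statement about the path components of $Fred(\HH)$, and it needs the same kind of argument as the theorem. So as written you only reach $T\oplus 1\simeq T'\oplus 1$.

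The fix, which is Atiyah's, is to use composition rather than direct sum:
\begin{itemize}
\item Let $R_\theta$ be the rotation on $\HH\oplus\HH$. For $\theta\in[0,\pi/2]$, the family $(A\oplus 1)R_\theta(B\oplus 1)R_\theta^{-1}$ is a Fredholm homotopy from $AB\oplus 1$ to $A\oplus B$.
\item In your construction $\mathrm{Ind}(C\oplus 1)=\mathrm{Ind}(C)$ holds on the nose (use $V\oplus 0$). Together with Step 2 this gives $\mathrm{Ind}(AB)=\mathrm{Ind}(A)+\mathrm{Ind}(B)$.
\item Since $(1-t)T'^*T'+t$ is Fredholm for every $t\in[0,1]$, we have $T'^*T'\simeq 1$, hence $\mathrm{Ind}(T'^*)=-\mathrm{Ind}(T')$.
\item If $\mathrm{Ind}(T)=\mathrm{Ind}(T')$, then $TT'^*$ has index $0$, so $TT'^*\simeq 1$ by the index-zero case. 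Therefore $T\simeq T(T'^*T')=(TT'^*)T'\simeq T'$, all homotopies being products of Fredholm families.
\end{itemize}
This gives injectivity. Compatibility of the bijection with $\oplus$ then follows from the additivity you proved in Step 2.
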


The proof of Theorem \ref{AtiyahK0} rests on a simple idea: if $T:X\rightarrow Fred(\HH)$ is a continuous map, it is tempting to consider the two fields of finite-dimensional vector spaces $(Ker(T_x))_{x\in X},(Coker(T_x))_{x\in X}$ and take the formal difference as an element of $K^0(X)$. The difficulty is that those two fields of vector spaces are not necessarily vector bundles over $X$ (local triviality may fail), so some work is necessary to prove that the formal difference is indeed well-defined as an element of $K^0(X)$ (see \cite{AtiKth}).

\subsubsection{Elliptic operators and Fredholm modules}

Let $M$ be a closed manifold, and let $P$ be an elliptic differential operator, say of order $m$, acting on sections on some (possibly trivial) vector bundle over $M$. Introduce the Sobolev spaces $H^s$, consisting of sections having square-integrable partial derivatives up to order $m$. Then $P:H^s\rightarrow H^{s-m}$ is bounded. By ellipticity it admits a {\it parametrix}, i.e. an inverse modulo lower order operators, and hence $P$ is Fredholm; 
moreover its index is independent of $s\geq m$.

For elliptic pseudo-differential operators of order 0, we may take $s=0$ and work just with Hilbert spaces of square-integrable sections.

\begin{Ex}\label{circle} Take $M=S^1$ and $P=\frac{1}{i}\frac{d}{dt}$ acting on $L^2([0,2\pi],\frac{dt}{2\pi})$. In the trigonometric basis $e_k(t)=e^{ikt}\;(k\in\Z)$, we have $P^2(e_k)=k^2e_k$ and $(1+P^2)^{1/2}(e_k)=(1+k^2)^{1/2}e_k$. Viewing $(1+P^2)^{1/2}$ as an order 1, invertible operator, we have:
$$\frac{P}{(1+P^2)^{1/2}}(e_k)=\frac{k}{(1+k^2)^{1/2}}e_k$$
and $\frac{P}{(1+P^2)^{1/2}}$ is a pseudo-differential operator of order 0. Then 
$$(1-(\frac{P}{(1+P^2)^{1/2}})^2)(e_k)=\frac{1}{1+k^2}e_k$$
so that $1-(\frac{P}{(1+P^2)^{1/2}})^2$ is a compact operator, and $\frac{P}{(1+P^2)^{1/2}}$ is Fredholm (of course of index 0, as it is self-adjoint).
\end{Ex}

\begin{Ex}\label{deRham} Let $M$ be a closed Riemannian manifold. Let $\Lambda^k T^*M$ denote the $k$-th exterior power of the cotangent bundle $T^*M$ (so that sections of $\Lambda^k T^*M$ are exactly $k$-forms on $M$). Set 
$$\Lambda^+ T^*M:=\oplus_{k\geq 0}\Lambda^{2k}T^*M;\;\Lambda^- T^*M:=\oplus_{k\geq 0}\Lambda^{2k+1}T^*M.$$
 Denoting by $d$ the exterior derivative on forms, define an operator $P:L^2(\Lambda^+ T^*M)\rightarrow L^2(\Lambda^- T^*M)$ by $P=d+d^*$. Then $P^*P=d^*d+dd^*$ is the Laplace-de Rham operator on differential forms, and $F:=P(1+P^*P)^{-1/2}$ is an elliptic pseudo-differential operator of order 0. Moreover
$$Ker(F)=\oplus_{k\geq 0}\{\mbox{harmonic $2k$-forms}\};\;Coker(F)=\oplus_{k\geq 0}\{\mbox{harmonic $(2k+1)$-forms}\}.$$
So $Ind(F)=\chi(M)$, the Euler-Poincar\'e characteristic of $M$.
\end{Ex}

Atiyah's idea consists in recalling that the Hilbert spaces associated with geometric operators as above, are also modules over $C^\infty(M)$ (or even $C(M)$) acting by pointwise multiplication on sections. Now a differential operator of course does not commute with multiplication by functions but, because of Leibniz'rule, it commutes with functions modulo operators of lower order. 

\begin{Ex} Take $M=S^1$ and $P=\frac{1}{i}\frac{d}{dt}$ as in Example \ref{circle}. Then for $f,\xi\in C^1(S^1)$:
$$[P,f](\xi)=(Pf-fP)(\xi)=\frac{1}{i}((f\xi)'-f\xi')=\frac{f'}{i}\xi,$$
so that $[P,f]$ is pointwise multiplication by $\frac{f'}{i}$, which extends to a bounded operator on $L^2$, as it involves no differentiation anymore.
\end{Ex}

If $P$ is pseudo-differential of order 0, then actually $[P,f]$ is a compact operator for every $f\in C(M)$.

\begin{Exo} Check this for the operator $\frac{P}{(1+P^2)^{1/2}}$ of Example \ref{circle}.
\end{Exo}

Strongly motivated by Example \ref{deRham}, we define:

\begin{Def} Let $X$ be a compact space. A triple $(\HH^+,\HH^-,F)$ is a {\bf Fredholm module} over $C(X)$ if
\begin{itemize}
\item $\HH^+,\HH^-$ are Hilbert spaces which are modules over $C(X)$, i.e. there exists $*$-homomorphisms $\pi_\pm:C(X)\rightarrow\mathcal{B}(\HH^\pm)$.
\item $F:\HH^+\rightarrow\HH^-$ is a bounded, Fredholm operator.
\item For every $f\in C(X)$, the operator $F\pi_+(f)-\pi_-(f)F$ is compact.
\end{itemize}
\end{Def}

Atiyah denotes by $Ell(X)$ the set of unitary equivalence classes of Fredholm modules over $X$: these are the cycles for $K$-homology. The correspondence $X\rightarrow Ell(X)$ is covariantly functorial: indeed if $\alpha:X\rightarrow Y$ is a continuous map, then $\alpha^*:C(Y)\rightarrow C(X): f\mapsto f\circ\alpha$ denotes the dual $*$-homomorphism, and if $(\HH^+,\HH^-,P)$ is a Fredholm module over $C(X)$, composing with $\alpha^*$ yields the Fredholm module $\alpha_*(\HH^+,\HH^-,P)$ over $C(Y)$.

In his paper \cite{AtiEll}, Atiyah leaves as an open question the precise definition of the equivalence relation to impose on $Ell(X)$ in order to get $K_0(X)$, mentioning that it should be related to homotopy. Kasparov \cite{KasConsp} worked out the precise notion of homotopy (we will expand on this in subsection \ref{KKordinary} below) so we can define:
$$K_0(X)=Ell(X)/\mbox{homotopy}.$$

\subsection{$G$-Fredholm modules}

Here we explain how to modify the construction from the previous subsection when we throw a group action in the picture.

Let $X$ be now a proper $G$-space. Say that $X$ is {\it $G$-compact} if the quotient space $G\backslash X$ is compact.

\begin{Def}\label{GFred} Let $X$ be a proper, $G$-compact space. A {\bf $G$-Fredholm module} over $X$ is a triple $(\HH^+,\HH^-,F)$ where:
\begin{itemize}
\item $\HH^+,\HH^-$ are Hilbert spaces equipped with (strongly continuous) unitary representations of $G$ and $G$-covariant representations $\pi_+,\pi_-$ of $C_0(X)$.
\item $F:\HH^+\rightarrow\HH^-$ is a linear, bounded, $G$-equivariant operator.
\item For every $f\in C_0(X)$, the operator $\pi_-(f)F-F\pi_+(f)$ is compact.
\item There exists a linear, bounded, $G$-equivariant operator $Q:\HH^-\rightarrow\HH^+$ such that for every $f\in C_0(X)$ the operators $\pi_+(f)(QF-1)$ and $\pi_-(f)(FQ-1)$ are compact.
\end{itemize}\end{Def}

For $X$ a proper $G$-compact space, we then define the {\it $G$-equivariant K-homology} of $X$:
$$K_0^G(X)=\{G-\mbox{Fredholm modules over}\;X\}/(G-\mbox{equivariant homotopy}).$$

We can then define the LHS of Conjecture \ref{BCconj}

\begin{Def}\label{LHS} $$K_0^G(\underline{E}G):=\lim_{X\subset\underline{E}G} K_0^G(X)$$
where the limit is taken on the directed set of proper $G$-compact subsets $X$ of $\underline{E}G$.
\end{Def}

The limit on the RHS of Definition \ref{LHS} takes care of the fact that $\underline{E}G$ is not necessarily $G$-compact and compensates for its lack of uniqueness (recall that it is only defined up to $G$-equivariant homotopy).

The next example generalizes Example \ref{deRham} to the $G$-equivariant setting.

\begin{Ex}\label{equivpseudodiff} Let $M$ be a smooth, complete Riemannian manifold, and let $G\curvearrowright M$ be a proper, isometric, $G$-compact action. Let $E^\pm$ be $G$-equivariant smooth vector bundles over $M$, let $\HH^\pm=L^2(M,E^\pm)$ denote their spaces of $L^2$-sections. Let $F:\HH^+\rightarrow \HH^-$ be a $G$-equivariant, elliptic, pseudo-differential operator of order 0. Then $(\HH^+,\HH^-,F)$ is a $G$-Fredholm module over $M$.
\end{Ex}

\begin{Ex}\label{equivdeRham} To get closer to Example \ref{deRham}, let us specialize Example \ref{equivpseudodiff} by taking $\HH^\pm=L^2(\Lambda^\pm TM), P=d+d^*$ and $F=P(1+P^*P)^{-1/2}$. The operator $Q$ in Definition \ref{GFred} can be taken to be $Q=F^*$. The operator $QF-1=-(1+P^*P)^{-1}$ is not compact when $M$ is not compact, but it is ``locally compact'', meaning that $\pi_+(f)(QF-1)$ is compact for $f\in C_0(M)$. To see this latter point, we may assume that $f$ is compactly supported with support in an open chart of $M$, allowing us to replace $M$ by the $n$-torus $\T^n\;(n=\dim M)$. On $\T^n$ the desired compactness can be shown using Fourier series.
\end{Ex}

\section{KK-theory and the assembly map}

References for this section: \cite{Black}, \cite{KasConsp}, \cite{KasInv}, \cite{MV03}.

To define the assembly map, we need Kasparov theory, which actually provides the right framework both for K-theory and K-homology of $C^*$-algebras.

\subsection{Kasparov theory}\label{KKordinary}

The goal is to associate, to every pair $(A,B)$ of separable $C^*$-algebras, abelian groups $KK_i(A,B)\;(i=0,1)$, contravariant in $A$, covariant in $B$, such that
\begin{itemize}
\item The K-theory group $K_i(B)$ is naturally isomorphic to $KK_i(\C,B)$.
\item (stability) If $p\in\mathcal{K}$ is a rank 1 projection, the homomorphism $A\rightarrow A\otimes\mathcal{K}:a\mapsto a\otimes p$ induces an isomorphism $KK_i(A\otimes\mathcal{K},B)\simeq KK_i(A,B)$. Similarly $KK_i(A,B)\simeq KK_i(A,B\otimes\mathcal{K})$.
\item (split exact sequences) In both variables, $KK_i$ maps split short exact sequences of $C^*$-algebras to split short exact sequences of groups.
\end{itemize}

The main property of KK-theory is the existence of an associative product, the {\it Kasparov product}:

\begin{Thm}\label{KKproduct} (Kasparov \cite{KasConsp}) There exists an associative, bi-additive product
$$KK_i(A,B)\times KK_j(B,C)\rightarrow KK_{i+j}(A,C):(x,y)\mapsto x\otimes_B y$$
(with indices counted modulo 2) that turns $KK_0(A,A)$ into a unital ring. Moreover any $*$-homomorphism $f:A\rightarrow B$ defines an element $[f_*]\in KK_0(A,B)$ such that $[f_*]\otimes_B y=f^*(y)$ for any $y\in KK_i(B,C)$. Similarly, if $g:B\rightarrow C$ is a $*$-homomorphism, then $x\otimes_B [g_*]=g_*(x)$ for every $x\in KK_i(A,B)$.
\end{Thm}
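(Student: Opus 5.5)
I will follow Kasparov's original strategy. First I fix the definitions of the cycles, which the statement leaves implicit. A Kasparov $(A,B)$-module is a triple $(E,\phi,F)$ consisting of:
\begin{itemize}
\item a countably generated $\Z/2$-graded Hilbert $B$-module $E$;
\item a graded $*$-homomorphism $\phi:A\to\mathcal{L}(E)$;
\item an odd operator $F\in\mathcal{L}(E)$ such that $\phi(a)(F^2-1)$, $\phi(a)(F-F^*)$ and $[F,\phi(a)]$ lie in $\mathcal{K}(E)$ for all $a\in A$.
\end{itemize}
$KK_0(A,B)$ is the set of homotopy classes, where homotopies are cycles over $B\otimes C([0,1])$; $KK_1$ is $KK_0$ with a Clifford algebra $\C_1$ tensored in. A $*$-homomorphism $f:A\to B$ gives the cycle $[f_*]=(B,f,0)$, graded trivially; here we use that $f(a)\in B=\mathcal{K}(B)$.

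For the product of $x=(E_1,\phi_1,F_1)$ and $y=(E_2,\phi_2,F_2)$, I take the module $E=E_1\hat\otimes_B E_2$ with $\phi=\phi_1\otimes 1$. The operator will be
$$F=M^{1/2}(F_1\otimes 1)+N^{1/2}G,$$
where $G$ is an $F_2$-connection and $M,N\ge 0$ satisfy $M+N=1$. Here $G$ being an $F_2$-connection means that for each $\xi\in E_1$ the graded commutator of $G$ with $T_\xi:E_2\to E$ satisfies $T_\xi F_2-(-1)^{\partial\xi}GT_\xi\in\mathcal{K}(E_2,E)$. Such connections exist by a partition of unity argument using a frame of $E_1$, the stabilization $E_1\oplus\HH_B\simeq\HH_B$ (Kasparov's stabilization theorem), and the compactness of $[F_2,\phi_2(b)]$.

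The heart of the proof, and the step I expect to be the main obstacle, is \textbf{Kasparov's technical lemma}. It produces the pair $(M,N)$ in $\mathcal{L}(E)$ with $M+N=1$ such that:
\begin{itemize}
\item $M\cdot\mathcal{K}(E_1)\otimes 1\subset\mathcal{K}(E)$;
\item $N\cdot\{G^2-1,\ [G,F_1\otimes1],\ G-G^*\}\subset\mathcal{K}(E)$;
\item $M$ and $N$ commute modulo compacts with $\phi(A)$, $F_1\otimes 1$ and $G$.
\end{itemize}
I would construct it from separability, using a quasicentral approximate unit $u_n$ of $\mathcal{K}(E_1)\otimes1+\mathcal{K}(E)$. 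This approximate unit should be asymptotically central for a countable dense family of the relevant operators, and then I set $M=\sum d_n^2$ with $d_n=(u_n-u_{n-1})^{1/2}$, a norm-convergent telescoping construction. Once $M$ and $N$ are in hand, checking that $(E,\phi,F)$ is a Kasparov module is a direct computation of $F^2-1$ modulo compacts, which splits into an $M$-part and an $N$-part.

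Next comes well-definedness and uniqueness up to homotopy. I would show that any two choices satisfying the resulting characterization (Connes–Skandalis: $F$ is an $F_2$-connection and $\phi(a)[F_1\otimes1,F]\phi(a)^*\ge 0$ modulo compacts) are operator homotopic. The homotopy is the straight line, which stays Fredholm by the positivity condition. Applying the construction over $C([0,1])$ then gives independence of the homotopy class of $x$ and $y$; bi-additivity is immediate from direct sums.

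For associativity, I use the same characterization: given cycles for $A\to B\to C\to D$, both bracketings yield operators on $E_1\hat\otimes E_2\hat\otimes E_3$ that satisfy the connection and positivity conditions relative to each other. This follows from a further application of the technical lemma with a common $(M,N)$ partition, and it shows that both bracketings are homotopic to a common cycle.

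Finally, for the unit and functoriality: if $x=[f_*]=(B,f,0)$, then $B\hat\otimes_B E_2\cong E_2$ with $\phi=\phi_2\circ f$, and $F_2$ is itself an $F_2$-connection satisfying positivity trivially. Hence $[f_*]\otimes y=(E_2,\phi_2 f,F_2)=f^*y$. Symmetrically, for $y=[g_*]$ the module $E_1\hat\otimes_B C$ with $F_1\otimes1$ (a $0$-connection) represents $g_*x$. Taking $f=\mathrm{id}_A$ gives the unit of the ring $KK_0(A,A)$.
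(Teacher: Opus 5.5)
The paper gives no proof of this theorem; it is quoted from Kasparov. Your overall route, Connes--Skandalis connections plus Kasparov's technical theorem, is the standard one. However, the step you call the heart of the argument is wrong as written, in two concrete ways.

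First, the properties you demand of $(M,N)$ do not make $(E,\phi,F)$ a Kasparov module. You only examine $F^2-1$, but you also need $[F,\phi(a)]\in\mathcal{K}(E)$. Modulo compacts,
$[F,\phi(a)]\equiv M^{1/2}([F_1,\phi_1(a)]\hat\otimes 1)+N^{1/2}[G,\phi(a)]$.
The first term is harmless. But $[G,\phi(a)]$ is in general compact only after multiplication by $\mathcal{K}(E_1)\hat\otimes 1$, and nothing in your list controls $N^{1/2}[G,\phi(a)]$.

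Here is a counterexample. Let $x$ be the degenerate cycle on $E_1=\hat{\HH}_B$ with $F_1$ the flip and $\phi_1(a)$ acting diagonally through a $*$-homomorphism $\rho:A\to B$. Let $y$ have $\phi_2$ nondegenerate, $F_2=F_2^*$, $F_2^2=1$, but $[F_2,\phi_2(\rho(a))]\neq 0$ for some $a$ (e.g. $A=B=C(S^1)$, $C=\C$, $F_2$ built from $2P-1$ with $P$ the Hardy projection). On $E\cong\hat{\HH}\hat\otimes E_2$ the connection $G=1\hat\otimes F_2$ satisfies $G^2-1=G-G^*=[G,F_1\hat\otimes 1]=0$. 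So $M=0$, $N=1$ fulfils all three of your requirements, yet $[F,\phi(a)]=1\hat\otimes[F_2,\phi_2(\rho(a))]$ is not compact.

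The remedy is to add $[G,\phi(A)]$ to the operators that $N$ must kill. This is legitimate because the connection property gives $(\mathcal{K}(E_1)\hat\otimes 1)[G,\phi(a)]\subset\mathcal{K}(E)$. That is precisely the orthogonality hypothesis $A_1A_2\subset J$ of the technical theorem, with $J=\mathcal{K}(E)$, $A_1=\mathcal{K}(E_1)\hat\otimes 1+J$, and $A_2$ generated by $G^2-1$, $G-G^*$, $[G,F_1\hat\otimes 1]$ and $[G,\phi(A)]$.

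Second, your construction of $M$ collapses. The partial sums telescope, $\sum_{n\le k}d_n^2=u_k-u_0$, so the series converges only strictly. Since $(u_n)$ is an approximate unit of an algebra containing $\mathcal{K}(E)$, $M$ is just the strict limit $1-u_0$. With $u_0=0$ this is $1$, and then $M(\mathcal{K}(E_1)\hat\otimes 1)\subset\mathcal{K}(E)$ fails as soon as $\mathcal{K}(E_1)\hat\otimes 1\not\subset\mathcal{K}(E)$. Your sketch never uses $A_1A_2\subset J$, which is what makes the lemma true.

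In Higson's proof one proceeds as follows:
\begin{itemize}
\item Take $(u_n)$ an approximate unit of $A_1$ that is quasicentral for $\Delta=\{\phi(A),F_1\hat\otimes 1,G\}$; this requires checking that $\Delta$ derives $A_1$.
\item Set $M=\sum_n d_nx_nd_n$, where $(x_n)$ is a quasicentral approximate unit of $J$.
\item Choose each $x_n$ after $d_n$ so that $\|(1-x_n)d_nh_2\|<2^{-n}$ for a strictly positive $h_2\in A_2$. This is possible exactly because $d_nh_2\in A_1A_2\subset J$.
\end{itemize}
Then $Mh_1$ is a norm-convergent sum of elements of $J$, $N=\sum_n d_n(1-x_n)d_n$ kills $h_2$, and $[M,\Delta]\subset J$.

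Smaller points to tighten:
\begin{itemize}
\item $B\hat\otimes_B E_2$ is $\overline{\phi_2(B)E_2}$, which equals $E_2$ only when $\phi_2$ is nondegenerate. Reduce to essential cycles before claiming $[f_*]\otimes_B y=f^*(y)$.
\item Products involving $KK_1$ need $\C_1\hat\otimes\C_1\cong\C_2$ and Morita invariance.
\item The straight-line homotopy requires $\phi(a)[F,F']\phi(a)^*\ge 0$ modulo compacts for the two candidate operators $F,F'$. This does not follow directly from positivity relative to $F_1\hat\otimes 1$ and needs another application of the technical theorem.
\item Associativity needs an explicit operator on $E_1\hat\otimes E_2\hat\otimes E_3$ satisfying both sets of conditions, not merely a common $(M,N)$.
\end{itemize}
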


This means that the assignment $f\mapsto [f_*]$ defines a functor from the category of separable $C^*$-algebras with $*$-homomorphisms, to the additive category with the same objects and elements of $KK_0(A,B)$ as morphisms from $A$ to $B$. If $f,g:A\rightarrow B$ are $*$-homomorphisms, then $f+g$ is usually not a $*$-homomorphism, but $[f_*]+[g_*]$ makes sense in $KK_0(A,B)$.

\medskip
After this advertisement for $KK$-theory, let us go into more details.

\medskip
We recall some terminology regarding Hilbert $C^*$-modules (for which a good reference is \cite{Lance}). If $B$ is a $C^*$-algebra and $\mathcal{E}$ is a (right) Hilbert $C^*$-module over $B$, we denote by $\mathcal{L}_B(\mathcal{E})$ the $C^*$-algebra of adjointable operators over $\mathcal{E}$, and by $\mathcal{K}_B(\mathcal{E})$ the closed ideal of $B$-compact operators. Recall that an operator is {\it $B$-compact} if it is a norm-limit of sums of operators of the form $x\mapsto\langle x|\xi\rangle\eta$, for some $\xi,\eta\in\mathcal{A}$.

\begin{Def}\label{KK0cycle} A {\bf cycle} for $KK_0(A,B)$ is a triple $(\mathcal{E}^+,\mathcal{E^-},F)$ where
\begin{itemize}
\item $\mathcal{E}^\pm$ is a Hilbert $C^*$-module over $B$, equipped with a representation of $A$ given by a $*$-homomorphism $\pi_\pm:A\ra\mathcal{L}_B(\mathcal{E}^\pm)$, and $F\in\mathcal{L}_B(\mathcal{E}^+,\mathcal{E}^-)$.
\item For every $a\in A$, we have $(F^*F-1)\pi_+(a)\in\mathcal{K}_B(\mathcal{E}^+)$ and $(FF^*-1)\pi_-(a)\in\mathcal{K}_B(\mathcal{E}^-)$.
\item For every $a\in A$, we have $F\pi_+(a)-\pi_-(a)F\in\mathcal{K}_B(\mathcal{E}^+,\mathcal{E}^-)$.
\end{itemize}
\end{Def}

\begin{Ex} If $f:A\ra B$ is a $*$-homomorphism, define $[f_*]=(B,0,0)$, where $B$ is viewed as a right $C^*$-module over itself, and as a left $A$-module through $f$. Then, since $\mathcal{K}_B(B)=B$, we see that all conditions in Definition \ref{KK0cycle} are satisfied.
\end{Ex}

\begin{Def}\label{KK1cycle} A {\bf cycle} for $KK_1(A,B)$ is a pair $(\mathcal{E},P)$ where
\begin{itemize}
\item $\mathcal{E}$ is a Hilbert $C^*$-module over $B$, equipped with a representation of $A$ given by a $*$-homomorphism $\pi:A\ra\mathcal{L}_B(\mathcal{E})$, and $P=P^*\in\mathcal{L}_B(\mathcal{E})$.
\item For every $a\in A$, we have $(P^2-P)\pi(a)\in\mathcal{K}_B(\mathcal{E})$.
\item For every $a\in A$, we have $[P,\pi(a)]\in\mathcal{K}_B(\mathcal{E})$.
\end{itemize}
\end{Def}

The relevant notion of homotopy is then:

\begin{Def}\label{homotopy} Two cycles $x_0,x_1$ for $KK_i(A,B)$ are {\bf homotopic} if there exists a cycle $y$ for $KK_i(A,B\otimes C[0,1])$ such that $e^{(0)}_*(y)$ is isomorphic to $x_0$ and $e^{(1)}_*(y)$ is isomorphic to $x_1$, where $e^{(t)}:B\otimes C[0,1]\ra B$ denotes evaluation at time $t$.
\end{Def}

We then define $KK_i(A,B)$ as the set of corresponding cycles modulo homotopy.

\begin{Prop}\label{KKabelian} Equipped with direct sum, $KK_i(A,B)\;(i=0,1)$ is an abelian group.
\end{Prop}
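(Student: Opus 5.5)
We have to show that direct sum of cycles descends to homotopy classes, and that the resulting operation is associative and commutative, has a neutral element, and has inverses. We treat $KK_0$ in detail; $KK_1$ is entirely parallel.

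\emph{Well-definedness and monoid structure.} The direct sum of two cycles $(\mathcal{E}_1^\pm,F_1)$ and $(\mathcal{E}_2^\pm,F_2)$ is $(\mathcal{E}_1^\pm\oplus\mathcal{E}_2^\pm,F_1\oplus F_2)$, with the diagonal representations of $A$. The conditions of Definition \ref{KK0cycle} hold componentwise, since $\mathcal{K}_B(\mathcal{E}_1\oplus\mathcal{E}_2)$ contains $\mathcal{K}_B(\mathcal{E}_1)\oplus\mathcal{K}_B(\mathcal{E}_2)$. If $y_1,y_2$ are homotopies in the sense of Definition \ref{homotopy}, then $y_1\oplus y_2$ is a cycle for $KK_0(A,B\otimes C[0,1])$. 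Evaluation $e^{(t)}_*$ commutes with direct sums, so $y_1\oplus y_2$ is a homotopy between the sums, and the operation is well defined on classes.

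Associativity and commutativity already hold up to isomorphism of cycles: we use the canonical isomorphisms $(\mathcal{E}_1\oplus\mathcal{E}_2)\oplus\mathcal{E}_3\simeq\mathcal{E}_1\oplus(\mathcal{E}_2\oplus\mathcal{E}_3)$ and the flip $\mathcal{E}_1\oplus\mathcal{E}_2\simeq\mathcal{E}_2\oplus\mathcal{E}_1$. Isomorphic cycles are homotopic via the constant homotopy $x\otimes C[0,1]$. The zero cycle $(0,0,0)$ is a neutral element. Hence $KK_0(A,B)$ is a commutative monoid.

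\emph{Degenerate cycles vanish.} Call a cycle \emph{degenerate} if $F^*F=1$, $FF^*=1$ and $F\pi_+(a)=\pi_-(a)F$ for all $a$. We claim every degenerate cycle $x$ is homotopic to $0$. Form $\mathcal{E}^\pm\otimes C_0([0,1))$ over $B\otimes C[0,1]$, with $F\otimes 1$ acting on it. It is a cycle because all the relevant operators are exactly zero, not merely compact. Its evaluation at $0$ is $x$, and its evaluation at $1$ is the zero module, so it is a homotopy from $x$ to $0$. This is the standard ``half-open cylinder'' trick.

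\emph{Inverses.} Given a cycle $x=(\mathcal{E}^+,\mathcal{E}^-,F)$, let $-x:=(\mathcal{E}^-,\mathcal{E}^+,F^*)$, where the grading is swapped and the representation is $\pi_\mp$. On $x\oplus(-x)$, with both even part and odd part equal to $\mathcal{E}^+\oplus\mathcal{E}^-$, consider the path of operators
$$F_t=\left(\begin{array}{cc}\cos(\pi t/2)F & \sin(\pi t/2) \\ -\sin(\pi t/2) & \cos(\pi t/2)F^*\end{array}\right),\quad t\in[0,1].$$
We must check that each $F_t$ satisfies the cycle conditions; this is the main obstacle. The off-diagonal identity entries commute with $\pi$ only because the representations on both summands agree; this is exactly why $-x$ carries $\pi_\mp$ with the grading swapped. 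Modulo compact operators after multiplying by $\pi(a)$, $F$ is unitary, and the computation then mirrors the rotation homotopy in the proof of Proposition \ref{K1abelian}. A norm-continuous path of operators, all satisfying the conditions, defines a cycle over $B\otimes C[0,1]$, so $F_t$ is a genuine homotopy.

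At $t=1$ the operator is $\left(\begin{array}{cc}0&1\\-1&0\end{array}\right)$, which is unitary and commutes exactly with $\pi$. So the endpoint is degenerate, hence zero by the previous step, and $x\oplus(-x)=0$ in $KK_0(A,B)$.

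For $KK_1$ the same scheme applies, with $-(\mathcal{E},P):=(\mathcal{E},1-P)$ and a rotation homotopy on $\mathcal{E}\oplus\mathcal{E}$ joining $P\oplus(1-P)$ to the exact projection $\left(\begin{array}{cc}1&0\\0&0\end{array}\right)$. This endpoint commutes with the diagonal representation of $A$ and satisfies $P^2=P$ exactly, so it is degenerate and homotopic to zero by the cylinder trick.
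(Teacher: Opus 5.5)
Your proof is correct and is essentially the paper's: the half-open cylinder $\mathcal{E}\otimes C_0([0,1))$ kills degenerate cycles, the inverse of $(\mathcal{E}^+,\mathcal{E}^-,F)$ is $(\mathcal{E}^-,\mathcal{E}^+,F^*)$, and your $F_t$ is the paper's rotation homotopy up to a sign (you also check well-definedness and the monoid axioms, which the paper leaves implicit). The one place you are vague is $KK_1$, where you target $\left(\begin{array}{cc}1&0\\0&0\end{array}\right)$ rather than the paper's $\frac12\left(\begin{array}{cc}1&1\\1&1\end{array}\right)$ without writing the path; conjugating $P\oplus(1-P)$ by a scalar rotation only reaches $(1-P)\oplus P$, but rotating only the second summand, $P_\theta=\left(\begin{array}{cc}P&0\\0&0\end{array}\right)+R_\theta\left(\begin{array}{cc}0&0\\0&1-P\end{array}\right)R_\theta^*$ with $R_\theta=\left(\begin{array}{cc}\cos\theta&-\sin\theta\\ \sin\theta&\cos\theta\end{array}\right)$ and $\theta\in[0,\pi/2]$, does work, because $(P^2-P)\pi(a)$ and $[P,\pi(a)]$ are compact and $R_\theta$ commutes with the diagonal representation.
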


\bpr Say that a cycle for $KK_i(A,B)$ is {\it degenerate} if, in either Definition \ref{KK0cycle} or Definition \ref{KK1cycle}, all operators which are assumed to be $B$-compact, are actually 0. 

{\it Claim:} Any degenerate cycle for $KK_0(A,B)$ (resp. $KK_1(A,B)$) is homotopic to the trivial cycle $(0,0,0)$. Indeed (say $i=1$), let $x=(\mathcal{E},P)$ be a degenerate cycle for $KK_1(A,B)$. Then
$y=(\mathcal{E}\otimes C_0[0,1[,P\otimes 1)$ is a cycle for $KK_1(A,B\otimes C[0,1])$ (reason: since $\mathcal{K}_B(\mathcal{E}\otimes C_0[0,1[)=\mathcal{K}_B(\mathcal{E})\otimes C_0[0,1[$, we see that $y$ is a cycle because $x$ is degenerate). Since $e^{(t)}_*(y)=x$ for $0\leq t<1$ and $e^{(1)}_*(y)=(0,0)$, we see that $y$ provides the desired homotopy.

We now start the proof of the Proposition itself.
\begin{itemize}
\item ($i=0$) The zero-element of $KK_0(A,B)$ is the class of the trivial cycle $(0,0,0)$. The opposite of the cycle $(\mathcal{E}^+,\mathcal{E}^-,F)$ is $(\mathcal{E}^-,\mathcal{E}^+,F^*)$. Indeed
$$(\mathcal{E}^+\oplus\mathcal{E}^-,\mathcal{E}^-\oplus\mathcal{E}^+,\left(\begin{array}{cc}F\cos(\frac{\pi}{2}t) & -\sin(\frac{\pi}{2}t) \\\sin(\frac{\pi}{2}t) & F^*\cos(\frac{\pi}{2}t)\end{array}\right))\;(t\in[0,1])$$
is a homotopy between $(\mathcal{E}^+\oplus\mathcal{E}^-,\mathcal{E}^-\oplus\mathcal{E}^+,\left(\begin{array}{cc}F & 0\\ 0 & F^*\end{array}\right))$ and the degenerate cycle $(\mathcal{E}^+\oplus\mathcal{E}^-,\mathcal{E}^-\oplus\mathcal{E}^+,\left(\begin{array}{cc}0 & -1 \\1 & 0\end{array}\right))$. We conclude using the claim.

\item ($i=1$) The zero element of $KK_1(A,B)$ is the class of $(0,0)$. The opposite of the cycle $(\mathcal{E},P)$ is $(\mathcal{E},1-P)$. Indeed
$$(\mathcal{E}\oplus\mathcal{E},\left(\begin{array}{cc}P\cos(\frac{\pi}{2}t)+\frac{1}{2}(1-\cos(\frac{\pi}{2}t)) & \frac{1}{2}\sin(\frac{\pi}{2}t) \\\frac{1}{2}\sin(\frac{\pi}{2}t) & -P\cos(\frac{\pi}{2}t)+\frac{1}{2}(1+\cos(\frac{\pi}{2}t))\end{array}\right)\;(t\in[0,1])$$
is a homotopy between $(\mathcal{E}\oplus\mathcal{E},\left(\begin{array}{cc}P & 0 \\0 & 1-P\end{array}\right)$ and the degenerate cycle $\left(\begin{array}{cc}1/2 & 1/2 \\1/2 & 1/2\end{array}\right)$. Again, the claim allows to conclude.\epr
\end{itemize}

\begin{Exo} Check the details of the previous proof.
\end{Exo}

\begin{Ex} (suggested by the referee) The ring structure on $KK_0(A,A)$ is not commutative in general. For example, $KK_0(\C^2,\C^2)$ is isomorphic as a ring to $M_2(\Z)$.
\end{Ex}

\subsection{Equivariant Kasparov theory} 

We now indicate the changes to be made to go from KK-theory to equivariant KK-theory. So let $G$ satisfy the usual assumptions. Let $A,B$ be $G-C^*$-algebras, i.e. $C^*$-algebras endowed with continuous actions of $G$ by $*$-automorphisms. Let $\mathcal{E}$ be a $C^*$-module over $B$, assume that $\mathcal{E}$ is equipped with a continuous action of $G$ by linear operators. Say that $\mathcal{E}$ is a {\it Hilbert $G-C^*$-module} if, for every $g\in G,b\in B,\xi,\eta\in\mathcal{E}$:
$$g(\xi.b)=(g\xi).(gb)\;\mbox{and}\;\langle g\xi| g\eta\rangle =g.\langle\xi|\eta\rangle.$$

Then, in the definition of cycles for $KK_i^G(A,B)$, we require $C^*$-modules to be $G-C^*$-modules over $B$, and representations of $A$ to be $G$-equivariant. Moreover (say for $i=1$) the operator $P$ should satisfy the following two conditions:
\begin{itemize}
\item The map $g\mapsto g\cdot P$ is {\it norm} continuous on $G$.
\item For every $a\in A,g\in G$, we have $(g\cdot P-P)\pi(a)\in\mathcal{K}_B(\mathcal{E})$.
\end{itemize}

The equivariant KK-groups $KK_i^G(A,B)$ are then defined as cycles modulo homotopy, as in subsection \ref{KKordinary}. These are abelian groups, as in Proposition \ref{KKabelian}. Moreover, assuming the $G-C^*$-algebras to be separable, all desirable properties listed in the beginning of subsection \ref{KKordinary} hold, in particular the equivariant analogue of Theorem \ref{KKproduct} on the Kasparov product (see \cite{KasInv}).

In particular $KK_0^G(A,A)$ is a unital ring, with unit given by $1_A=[id_A,*]$ where $id_A$ is the identity on $A$; this is the class represented by $(A,0,0)$ with the usual left action of $A$.

\begin{Ex}\label{compactgroup} For $G$ compact and $A=\C$ with trivial $G$-action, we have $KK_0^G(\C,\C)=R(G)$, the representation ring of isomorphism classes of finite-dimensional representations of $G$. On the other hand $KK_1^G(\C,\C)=0$.
\end{Ex}

A useful construction in equivariant KK-theory is {\it extension of scalars}. For $C^*$-algebras $A,B$, denote by $A\otimes_{min} B$ the {\it minimal} $C^*$-tensor product of $A$ and $B$.

\begin{Prop}\label{extension} Let $A,B,C$ be separable $G-C^*$-algebras. There exists a homomorphism 
$\tau_C:KK_i^G(A,B)\ra KK_i^G(A\otimes_{min}C,B\otimes_{min}C)$ which is natural and compatible with the Kasparov product. For $i=0$ it is given by $(\mathcal{E}^+,\mathcal{E}^-,F)\mapsto(\mathcal{E}^+\otimes C,\mathcal{E}^-\otimes C,F\otimes 1_C)$, and analogously for $i=1$.
\end{Prop}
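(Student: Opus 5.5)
The plan is to define $\tau_C$ on cycles by the stated formula, check that it sends cycles to cycles and homotopies to homotopies, and then verify additivity, naturality and compatibility with the Kasparov product. Everything is done at the level of cycles (Definitions \ref{KK0cycle} and \ref{KK1cycle}, with the equivariance conditions).

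\textbf{Cycles go to cycles.} Given $(\mathcal{E}^+,\mathcal{E}^-,F)$, form the exterior tensor products $\mathcal{E}^\pm\otimes C$. These are Hilbert $G$-$C^*$-modules over $B\otimes_{min}C$, with inner product $\langle \xi\otimes c|\eta\otimes c'\rangle=\langle\xi|\eta\rangle\otimes c^*c'$ and the diagonal $G$-action. The algebra $A\otimes_{min}C$ acts by $\pi_\pm\otimes\mathrm{id}_C$, where $C$ acts on itself by left multiplication. One has to check that this representation is well defined on the minimal tensor product. I would do this by representing everything faithfully on Hilbert spaces, so that the spatial norm is dominated.

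The operator is $F\otimes 1$, which is adjointable because $(F\otimes 1)^*=F^*\otimes 1$. The key fact is
$$\mathcal{K}_B(\mathcal{E})\otimes C=\mathcal{K}_{B\otimes C}(\mathcal{E}\otimes C),$$
with $\theta_{\xi,\eta}\otimes\theta_{c,c'}$ corresponding to $\theta_{\xi\otimes c,\eta\otimes c'}$. Consequently, for every compact $k$ and every $c\in C$, the operator $k\otimes c$ is compact, and more generally $(T\otimes 1)(a\otimes c)$ is compact whenever $T\pi(a)$ is compact.

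Now take an elementary tensor $a\otimes c$. Then
$$(F^*F-1)\otimes 1\cdot(\pi_+(a)\otimes c)=((F^*F-1)\pi_+(a))\otimes c,$$
which is compact. The commutator $[F\otimes 1,\pi(a)\otimes c]$ is compact for the same reason, and so is $(g\cdot(F\otimes 1)-F\otimes 1)(\pi(a)\otimes c)$. Norm continuity of $g\mapsto g\cdot(F\otimes1)=(g\cdot F)\otimes 1$ holds because $g(\cdot)g^{-1}$ on the $C$-factor cancels with $1_C$. Compactness then extends to all of $A\otimes_{min}C$ by linearity and norm density, since the compacts form a closed ideal. The $KK_1$ case is the same, with $P\otimes 1$ in place of $F\otimes 1$.

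\textbf{Homotopies, additivity and naturality.} Apply the same construction over $B\otimes C[0,1]$, using $(B\otimes C[0,1])\otimes C\cong (B\otimes C)\otimes C[0,1]$. Evaluation at $t$ commutes with tensoring by $C$, so homotopic cycles have homotopic images. Direct sums are preserved, so $\tau_C$ is a homomorphism. Naturality with respect to $*$-homomorphisms in either variable is immediate from the formula.

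\textbf{Compatibility with the product.} This is the main obstacle:
$$\tau_C(x\otimes_B y)=\tau_C(x)\otimes_{B\otimes C}\tau_C(y).$$
The approach is to use Kasparov's characterization of the product via connections. Suppose $F''$ on $\mathcal{E}\otimes_B\mathcal{E}'$ is an $F'$-connection satisfying the positivity condition relative to $F$. Then $F''\otimes 1_C$ on
$$(\mathcal{E}\otimes C)\otimes_{B\otimes C}(\mathcal{E}'\otimes C)\cong(\mathcal{E}\otimes_B\mathcal{E}')\otimes C$$
should be an $(F'\otimes1)$-connection satisfying the same positivity condition. Each of the connection and positivity conditions is of the form ``something $\otimes\,1$ times $a\otimes c$ is compact (or positive modulo compacts)''. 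These therefore follow by the same elementary-tensor-and-density argument as above, after first establishing the module isomorphism.

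Uniqueness of the Kasparov product up to homotopy then gives the identity. Since these connection techniques are not developed in these notes, I would cite \cite{KasInv} for the technical verification.
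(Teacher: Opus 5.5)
The paper gives no proof of this proposition. It is recalled from Kasparov \cite{KasInv} as part of the equivariant KK toolkit, so there is no in-text argument to compare with. Your outline is the standard one, and most of it checks out:
\begin{itemize}
\item the representation $\pi\otimes\mathrm{id}_C$ is well defined on $A\otimes_{min}C$;
\item the identification $\mathcal{K}_B(\mathcal{E})\otimes C=\mathcal{K}_{B\otimes C}(\mathcal{E}\otimes C)$, together with linearity and density, handles all the compactness conditions, including the equivariant ones;
\item homotopies, direct sums and naturality in $A$ and $B$ are fine;
\item for the product, the isomorphism $(\mathcal{E}\otimes C)\otimes_{B\otimes C}(\mathcal{E}'\otimes C)\cong(\mathcal{E}\otimes_B\mathcal{E}')\otimes C$ (which uses $\overline{CC}=C$) works as you say;
\item so does the connection condition, since it is linear and bounded in $\zeta\in\mathcal{E}\otimes C$, with $T_{\xi\otimes c}$ corresponding to $T_\xi\otimes c$.
\end{itemize}
Deferring the remaining details to \cite{KasInv} is also exactly what the paper does.

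The step that fails as written is the positivity condition. Let $X$ be the (graded) commutator $[F\otimes 1,F'']$ on $\mathcal{E}''=\mathcal{E}\otimes_B\mathcal{E}'$, and let $\tilde\pi=\pi\otimes\mathrm{id}_C$. You need $\tilde\pi(z)(X\otimes 1_C)\tilde\pi(z)^*\geq 0$ modulo compacts for every $z\in A\otimes_{min}C$. This condition is quadratic in $z$. The set of $z$ satisfying it is norm closed, but it is not a priori a linear subspace, so checking elementary tensors says nothing about $z=\sum_i a_i\otimes c_i$. For such $z$ one gets $\sum_{i,j}\pi(a_i)X\pi(a_j)^*\otimes c_ic_j^*$, and the cross terms are not controlled by the hypothesis on single elements $a\in A$.

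What you need is a matrix version of the hypothesis.
\begin{itemize}
\item Take an approximate unit $(e_\lambda)$ of $A$, and let $\Gamma$ be the column $(\pi(a_1),\dots,\pi(a_n))^T$.
\item Then $[\pi(a_i)X\pi(a_j)^*]_{i,j}$ is the norm limit of $\Gamma\,\pi(e_\lambda)X\pi(e_\lambda)\,\Gamma^*$. Each of these is positive modulo $M_n(\mathcal{K}(\mathcal{E}''))$, by the hypothesis applied at $a=e_\lambda$.
\item Hence $[\pi(a_i)X\pi(a_j)^*]_{i,j}=\Lambda\Lambda^*+K$ with $K\in M_n(\mathcal{K}(\mathcal{E}''))$, and
\[\sum_{i,j}\pi(a_i)X\pi(a_j)^*\otimes c_ic_j^*=\sum_k\Big(\sum_i\Lambda_{ik}\otimes c_i\Big)\Big(\sum_j\Lambda_{jk}\otimes c_j\Big)^*+\sum_{i,j}K_{ij}\otimes c_ic_j^*.\]
This is a positive operator plus an element of $\mathcal{K}(\mathcal{E}'')\otimes C=\mathcal{K}(\mathcal{E}''\otimes C)$.
\end{itemize}
Only after this step does closedness of the set of operators that are positive modulo compacts let you pass to all of $A\otimes_{min}C$. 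With this supplement, the uniqueness part of the Connes--Skandalis characterization does give $\tau_C(x\otimes_B y)=\tau_C(x)\otimes_{B\otimes C}\tau_C(y)$.
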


We may now revisit the definition of {\it equivariant K-homology} (compare with Definition \ref{LHS}):

\begin{Def} If $X$ is a proper, $G$-compact space, set
$$K_i^G(X):=KK_i^G(C_0(X),\C)\;(i=0,1).$$
If $Y$ is any proper $G$-space (e.g. $Y=\underline{E}G$), set
$$K_i^G(Y)=\lim_X K_i^G(X)\;(i=0,1),$$
where the limit is taken on the directed set of proper $G$-compact subsets $X$ of $Y$.
\end{Def}

We now pass to the compatibilities between equivariant and non-equivariant KK-theory. Recall that, if the group $G$ acts continuously on the $C^*$-algebra $A$ by $*$-automorphims, we may define the {\it full crossed product} $A\rtimes G$ and the {\it reduced crossed product} $A\rtimes_r G$. Both are defined as suitable completions of $C_c(G,A)$, the space of compactly supported continuous $A$-valued functions on $G$ equipped with {\it twisted convolution}: for $f_1,f_2\in C_c(G,A),g\in G$:
\begin{equation}\label{twisted}
(f_1\ast f_2)(g):=\int_G f_1(h)(h\cdot (f_2(h^{-1}g)))\,dh
\end{equation}

\begin{Ex} For $A=\C$ with the trivial $G$-action, we get $\C\rtimes G= C^*(G)$, the {\bf full group $C^*$-algebra}; and $\C\rtimes_r G=C^*_r(G)$, the reduced group $C^*$-algebra appearing in Conjecture \ref{BCconj}. 
\end{Ex}

We denote by $\lambda_{A,G}:A\rtimes G\ra A\rtimes_r G$ the canonical $*$-epimorphism from the full to the reduced crossed product. If $G$ is amenable, then $\lambda_{A,G}$ is an isomorphism. The converse is false. For example, if $G$ is arbitrary and $X$ is a proper $G$-space, we will use later that $\lambda_{C_0(X),G}$ is always an isomorphism (see e.g. \cite{Anan}).

For $A=\C$, we denote by $\lambda_G:C^*(G)\ra C^*_r(G)$ the canonical $*$-epimorphism associated with the regular representation of $G$. Now $\lambda_G$ is an isomorphism if and only if $G$ is amenable.

\begin{Prop} There exist natural {\bf descent} homomorphisms $$\widetilde{j_G}:KK_i^G(A,B)\ra KK_i(A\rtimes G,B\rtimes G)$$ and $$j_G:KK_i^G(A,B)\ra KK_i(A\rtimes_r G,B\rtimes_rG),$$
compatible with Kasparov products, such that the following diagram is commutative:
$$\left.\begin{array}{ccc}KK_i^G(A,B) & \stackrel{\widetilde{j_G}}{\longrightarrow} & KK_i(A\rtimes G,B\rtimes G) \\ & & \\j_G\downarrow&  & \downarrow(\lambda_{B,G})_* \\ & & \\ KK_i(A\rtimes_rG,B\rtimes_rG) & \stackrel{(\lambda_{A,G})^*}{\longrightarrow} & KK_i(A\rtimes G,B\rtimes_r G)\end{array}\right.$$
\end{Prop}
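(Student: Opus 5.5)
The plan is to construct both descent maps explicitly on cycles, following Kasparov \cite{KasInv}, and check that they pass to homotopy classes. Take $i=0$; the case $i=1$ is identical, with one module and a self-adjoint $P$. Let $(\mathcal{E}^+,\mathcal{E}^-,F)$ be a cycle for $KK_0^G(A,B)$. On $C_c(G,\mathcal{E}^\pm)$ define a right action of $C_c(G,B)$ and a $C_c(G,B)$-valued inner product by the usual formulas:
$$(\xi\cdot f)(g)=\int_G \xi(h)\,h\cdot f(h^{-1}g)\,dh,\qquad \langle\xi|\eta\rangle(g)=\int_G h^{-1}\cdot\langle \xi(h)|\eta(hg)\rangle\,dh .$$
Completing for the norm of $B\rtimes G$ gives Hilbert modules $\mathcal{E}^\pm\rtimes G$. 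Completing for the norm of $B\rtimes_r G$ gives $\mathcal{E}^\pm\rtimes_r G$. In the reduced case one can identify $\mathcal{E}\rtimes_r G$ with $\mathcal{E}\otimes_B (B\rtimes_r G)$, which makes the norm transparent. The algebra $C_c(G,A)$ acts on the left by the twisted-convolution formula (\ref{twisted}), with $\pi_\pm$ inserted, and this extends to $A\rtimes G$, respectively $A\rtimes_r G$. The operator is $\tilde F$, defined by $(\tilde F\xi)(g)=F(\xi(g))$.

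The first step is to check that $\tilde F$ is adjointable and that the data form a cycle. Equivariance up to compacts and norm continuity of $g\mapsto g\cdot F$ are exactly what is needed. First I will average $F$ over $G$ to make it genuinely $G$-invariant, using a cutoff function or simply the freedom to modify $F$ by a locally compact perturbation. Kasparov's averaging lemma does this when $G$ is not compact: it replaces $F$ by a homotopic cycle with $F$ exactly $G$-equivariant. Then it remains to show that $[\tilde F,a]$ and $(\tilde F^*\tilde F-1)a$ lie in $\mathcal{K}(\mathcal{E}\rtimes G)$ for $a\in C_c(G,A)$. Such an operator has an integral kernel $g\mapsto [F,\pi(a(g))]$ or $(F^*F-1)\pi(a(g))$ with values in $\mathcal{K}_B(\mathcal{E})$, compactly supported and norm continuous. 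The key lemma is that operators of the form $\xi\mapsto\int k(h)\,h\cdot\xi(h^{-1}\,\cdot)$, with $k\in C_c(G,\mathcal{K}_B(\mathcal{E}))$, are $B\rtimes G$-compact. Its proof reduces, by density, to elementary kernels $k(h)=\theta_{\eta,\zeta}\,\phi(h)$, which one writes as rank-one operators $\theta_{\eta\otimes\psi,\zeta\otimes\psi'}$ on $\mathcal{E}\rtimes G$. I expect this lemma, together with the averaging step, to be the main technical obstacle.

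Next, homotopies go to homotopies. A cycle over $B\otimes C[0,1]$ descends to one over $(B\otimes C[0,1])\rtimes G=(B\rtimes G)\otimes C[0,1]$, which is legitimate since $[0,1]$ carries the trivial action and $C[0,1]$ is nuclear; the same holds for reduced crossed products. Evaluation at each endpoint commutes with descent. Direct sums are visibly preserved, so both maps are well-defined homomorphisms. Compatibility with Kasparov products I would verify via the connection characterization of the product: if $F''$ is an $F'$-connection on the equivariant product module, its descent is a $\tilde F'$-connection on $(\mathcal{E}\otimes_B\mathcal{E}')\rtimes G\cong(\mathcal{E}\rtimes G)\otimes_{B\rtimes G}(\mathcal{E}'\rtimes G)$, and the positivity condition is preserved. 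I will only sketch this step, citing \cite{KasInv}.

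Finally, commutativity of the diagram is checked at the level of cycles. On one side, $(\lambda_{B,G})_*\widetilde{j_G}(x)$ is $(\mathcal{E}\rtimes G)\otimes_{B\rtimes G}(B\rtimes_r G)$. This is naturally isomorphic to $\mathcal{E}\rtimes_r G$, because both are completions of $C_c(G,\mathcal{E})$ with the same $B\rtimes_r G$-valued inner product. Under this isomorphism the left action is that of $A\rtimes G$ and the operator is $\tilde F$. On the other side, $(\lambda_{A,G})^*j_G(x)$ is $\mathcal{E}\rtimes_r G$ with $A\rtimes G$ acting through $\lambda_{A,G}$ and the same operator $\tilde F$. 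These two actions agree on $C_c(G,A)$, hence everywhere, so the two cycles are isomorphic.
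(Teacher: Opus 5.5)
The paper states this proposition without proof. It is Kasparov's descent theorem, quoted from \cite{KasInv}, and your construction is Kasparov's. The modules $\mathcal{E}\rtimes G\cong\mathcal{E}\otimes_B(B\rtimes G)$ and $\mathcal{E}\rtimes_r G\cong\mathcal{E}\otimes_B(B\rtimes_r G)$ are right, as are the pointwise operator $\tilde F=F\otimes 1$, the lemma $\mathcal{K}_B(\mathcal{E})\rtimes G\cong\mathcal{K}_{B\rtimes G}(\mathcal{E}\rtimes G)$, homotopy invariance, and the cycle-level check of the square.

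\textbf{The averaging step fails.} You cannot in general replace $F$ by a homotopic, exactly $G$-invariant operator. Averaging needs either a finite Haar measure ($G$ compact) or a cutoff function acting through $A$ ($A$ proper, e.g.\ $A=C_0(X)$ with $X$ proper, where $\int_G g(c)\,(g\cdot F)\,g(c)\,dg$ works). For $A=\C$ and $G$ non-compact neither is available, and invariant representatives need not exist. Take $G=\Z$, $A=B=\C$, and the cycle $(\ell^2(\Z),P)$ with $P$ the projection onto $\ell^2(\N)$.
\begin{itemize}
\item This class is non-zero in $KK_1^{\Z}(\C,\C)$. After Fell absorption its descent is the Toeplitz class times $[1]$, which sends $[t]\in K_1(C^*(\Z))$ to $\pm[1]$.
\item Every cycle for $KK_1^{\Z}(\C,\C)$ with exactly invariant operator is zero. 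Off $\{0,1\}$ the spectrum of $P$ consists of eigenvalues of finite multiplicity. Functional calculus, which preserves invariance, deforms $P$ into a projection plus $\tfrac12$ on a finite-dimensional invariant eigenspace. That summand is homotopic to $0$, since every operator on it is compact.
\end{itemize}

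\textbf{The repair} is to drop the averaging, as Kasparov does; your own remark that equivariance up to compacts and norm continuity of $g\mapsto g\cdot F$ ``are exactly what is needed'' already points there. Without invariance, the kernel of $[\tilde F,a]$ for $a\in C_c(G,A)$ is
\[
h\longmapsto F\pi(a(h))-\pi(a(h))(h\cdot F)=\bigl[F,\pi(a(h))\bigr]+\pi(a(h))\bigl(F-h\cdot F\bigr).
\]
The second term lies in $C_c(G,\mathcal{K}_B(\mathcal{E}))$ by the two equivariance axioms:
\begin{itemize}
\item norm continuity of $h\mapsto h\cdot F$;
\item compactness of $\pi(a)(h\cdot F-F)$, which follows from compactness of $(h\cdot F-F)\pi(a)$ and of the commutators $[F,\pi(b)]$ for all $b\in A$.
\end{itemize}
The kernel of $(\tilde F^*\tilde F-1)a$ is $(F^*F-1)\pi(a(h))$, with no invariance needed. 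So your key lemma concludes directly.

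\textbf{One further step needs an argument rather than an assertion:} that the action of $C_c(G,A)$ on $\mathcal{E}\rtimes_r G$ is bounded by the reduced norm. Represent $B$ faithfully on $H$. Then $(\mathcal{E}\rtimes_r G)\otimes_{B\rtimes_r G}L^2(G,H)\cong L^2(G,\mathcal{E}\otimes_B H)$, and $C_c(G,A)$ acts there by the regular representation induced from $\mathcal{E}\otimes_B H$. Since $\mathcal{L}_{B\rtimes_r G}(\mathcal{E}\rtimes_r G)$ embeds isometrically into the bounded operators on that Hilbert space, the bound follows.
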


\subsection{The map $\tilde{\mu}_G$}

We are now almost ready to define the last ingredient in Conjecture \ref{BCconj}, namely the assembly map. It will be obtained by first applying descent to an equivariant $K$-homology class, and then pairing the resulting $KK$-class with a canonical class associated with the proper $G$-space.

\subsubsection{The canonical idempotent}

Let $X$ be a proper, $G$-compact space. Denote by $C_c(X)$ the space of compactly supported, continuous functions on $G$. 

\begin{Lem}\label{canon}\begin{enumerate}
\item There exists a non-negative $h\in C_c(X)$ such that $\int_G h(s^{-1}\cdot x)\,ds=1$ for every $x\in X$.
\item Set $p_X(x,s):=\sqrt{h(x)h(s^{-1}\cdot x)}$. Then $p_X$ is an idempotent in $C_c(X\times G)$ (endowed with twisted convolution as in (\ref{twisted}).
\end{enumerate}
\end{Lem}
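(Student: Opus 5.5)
For part 1, the plan is to build a cutoff function in two steps: first a compactly supported function whose $G$-translates cover $X$, then a normalization. Since $G\backslash X$ is compact and $X$ is locally compact (as a proper $G$-compact space), I will choose a compact set $K\subset X$ whose image in $G\backslash X$ is everything. Concretely, cover $X$ by relatively compact open sets; their images are open in $G\backslash X$ (the quotient map is open), and finitely many of them cover. Then take a non-negative $c\in C_c(X)$ with $c>0$ on $K$, so that for every $x$ there is $s$ with $c(s^{-1}x)>0$. Set $\Phi(x):=\int_G c(s^{-1}\cdot x)\,ds$.

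By properness, for $x$ in a compact set $L$ the set $\{s: s^{-1}L\cap \mathrm{supp}(c)\neq\emptyset\}$ is relatively compact. To justify this I will use the local model of Definition \ref{properaction}, or the equivalent statement that $G\times X\to X\times X$ is proper. It follows that $\Phi$ is finite and continuous, by uniform continuity of $c$ and dominated convergence. It is also strictly positive and left-invariant: $\Phi(g x)=\Phi(x)$ by left invariance of Haar measure. Then $h:=c/\Phi$ lies in $C_c(X)$, is non-negative, and satisfies $\int_G h(s^{-1}x)\,ds=\Phi(x)/\Phi(x)=1$, because $\Phi(s^{-1}x)=\Phi(x)$.

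For part 2, continuity of $p_X$ on $X\times G$ is clear. Compact support follows from the same properness fact: $h(x)h(s^{-1}x)\neq 0$ forces $x\in\mathrm{supp}\,h$ and $s$ in a compact set. Here I view $C_c(X\times G)$ as $C_c(G,C_0(X))$, with $G$ acting on $C_0(X)$ by $(g\cdot f)(x)=f(g^{-1}x)$.

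The idempotency computation is then direct from (\ref{twisted}):
$$(p_X\ast p_X)(x,s)=\int_G p_X(x,t)\,p_X(t^{-1}x,t^{-1}s)\,dt=\int_G \sqrt{h(x)h(t^{-1}x)}\sqrt{h(t^{-1}x)h(s^{-1}x)}\,dt.$$
This equals $\sqrt{h(x)h(s^{-1}x)}\int_G h(t^{-1}x)\,dt=p_X(x,s)$ by part 1.

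I will also check self-adjointness, $p_X^*(x,s)=\Delta(s)^{-1}\,\overline{s\cdot p_X(\cdot,s^{-1})}(x)$, which is what makes it a projection. Two conventions can break this and need care: the precise involution, including the modular function, and whether a factor $\Delta(s)^{-1/2}$ should be inserted in $p_X$ for non-unimodular $G$. The statement only asks for an idempotent, so for part 2 the convolution identity suffices. I expect the main obstacle to be the properness bookkeeping in part 1, namely finiteness and continuity of $\Phi$ and the compactness of supports, rather than the algebra.
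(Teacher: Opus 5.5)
Your proposal is correct and follows the paper's proof: you normalize a compactly supported function that is positive somewhere on every orbit by its $G$-average, and you verify idempotency by the same twisted-convolution computation, $\int_G p_X(x,t)\,p_X(t^{-1}x,t^{-1}s)\,dt=\sqrt{h(x)h(s^{-1}x)}\int_G h(t^{-1}x)\,dt$. Your extra bookkeeping (finiteness, continuity and invariance of $\Phi$, positivity of $c$ on a compact set meeting every orbit, and compact support of $p_X$) fills in points the paper leaves implicit.
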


\bpr \begin{enumerate} 
\item By $G$-compactness, we find a non-negative function $g\in C_c(X)$ whose support meets every $G$-orbit in $X$. Then we set $h(x):=\frac{g(x)}{\int_G g(t^{-1}\cdot x)\,dt}$.

\begin{Ex} For $G=\Z$ acting by integer translations on $X=\R$, we may take $$h(x)=\left\{\begin{array}{cl}x+1 & \mbox{if}\;x\in[-1,0] \\1-x & \mbox{if}\;x\in[0,1] \\0 & \mbox{otherwise}\end{array}\right. .$$\end{Ex}

\item For $f_1,f_2\in C_c(X\times G),x\in X, g\in G$, we have by (\ref{twisted}):
$$(f_1\ast f_2)(x,g)=\int_G f_1(x,s)f_2(s^{-1}\cdot x,s^{-1}g)\,ds.$$
Then
$$p_X^2(x,g)=\int_G [h(x)h(s^{-1}\cdot x)h(s^{-1}\cdot x)h(g^{-1}\cdot x)]^{1/2}\,ds=(h(x)h(g^{-1}\cdot x))^{1/2}=p_X(x,g)$$
where the 2nd equality follows from the first part of the lemma.
\epr
\end{enumerate}

We denote by $[p_X]$ the class of the projection $p_X$ in $K_0(C_0(X)\rtimes G)= KK_0(\C,C_0(X)\rtimes G)$. Note that $[p_X]$ is independent of the choice of the function $h$ in Lemma \ref{canon}: the reason is that the set of functions $h$'s satisfying part 1 of the lemma, is convex. 

\begin{Def} $[p_X]$ is the {\bf canonical element} in $KK_0(\C,C_0(X)\rtimes G)$.
\end{Def}

\subsubsection{The Baum-Connes conjecture with coefficients}

For $X$ a proper, $G$-compact subset of $\underline{E}G$, we define the map
$$\tilde{\mu}_{G,X}: KK_i^G(C_0(X),\C)\ra KK_i(\C,C^*(G))=K_i(C^*G):x\mapsto [p_X]\otimes_{C_0(X)\rtimes G}\;\tilde{j}_G(x).$$

Note that here we do not need the full force of the Kasparov product. Indeed viewing $p_X$ a a $*$-homomorphism $\C\ra C_0(X)\rtimes G$, we may as well define $\tilde{\mu}_{G,X}(x):=p_X^*(\tilde{j}_G(x))$.

For $X$'s in the directed set of proper $G$-compact subsets of $\underline{E}G$, the maps $\tilde{\mu}_{G,X}$ match and define 
$$\tilde{\mu}_G:K_i^G(\underline{E}G)=\lim_XKK_i^G(C_0(X),\C)\ra K_i(C^*G).$$

More generally, for any $C^*$-algebra $B$ with a continuous $G$-action, we set
$$\tilde{\mu}_{G,B}:KK_i^G(\underline{E}G,B):=\lim_X KK_i^G(C_0(X),B)\ra K_i(B\rtimes G):x\mapsto [p_X]\otimes_{C_0(X)\rtimes G}\;j_{G,B}(x)$$
(where $x\in KK_i^G(C_0(X),B)$).

\begin{Def} For $B$ as above, the {\bf assembly map for $G$ and $B$} is the map $\mu_{G,B}:KK_i^G(\underline{E}G,B)\ra K_i(B\rtimes_r G)$ given by $\mu_{G,B}:=(\lambda_{G,B})_*\circ\tilde{\mu}_{G,B}$.

\medskip
The {\bf assembly map} for $G$ is $\mu_G=(\lambda_G)_*\circ\tilde{\mu}_G:K_i^G(\underline{E}G)\ra K_i(C^*_r(G))$.
\end{Def}

The {\it Baum-Connes conjecture with coefficients} for $G$ is the following statement.

\begin{Conj}\label{BCcoeff} For every $C^*$-algebra $B$ equipped with a continuous $G$-action by $*$-automorphisms, the map $\mu_{G,B}:KK_i^G(\underline{E}G,B)\ra K_i(B\rtimes_r G)\;(i=0,1)$ is an isomorphism.
\end{Conj}

Conjecture \ref{BCconj} is of course the case $B=\C$ of Conjecture \ref{BCcoeff}. The latter has pros and cons with respect to the former.

\begin{itemize}
\item Conjecture \ref{BCcoeff} is known to be stable under passing to a closed subgroup, while the corresponding statement is unknown for Conjecture \ref{BCconj}.
\item Conjecture \ref{BCcoeff} was shown to hold for important classes of groups, see Theorems \ref{gamma=1} and \ref{gammaexists} below.
\item In spite of those positive results, Conjecture \ref{BCcoeff} is false in general: counterexamples have been constructed by N. Higson, V. Lafforgue and G. Skandalis \cite{HLS}.
\end{itemize}

\subsection{The assembly map for discrete groups}

In this whole subsection, assume groups to be discrete. 

\subsubsection{The strong Novikov conjecture}

If $X$ is $G$-proper and moreover $G$ acts {\bf freely} on $X$, then $K_i^G(X)=K_i(G\backslash X)$. In particular $K_i^G(EG)=K_i(BG)$. By the universal property of $\underline{E}G$, we have a $G$-equivariant map $EG\ra\underline{E}G$, unique up to $G$-equivariant homotopy. So we get a canonical map $\iota_*:K_i(BG)\ra K_i^G(\underline{E}G)$ which is known to be rationally injective (see section 7 in \cite{BCH94}).

The following statement is known as the {\bf strong Novikov conjecture} for $G$:

\begin{Conj}\label{SNC} The map $\mu_G\circ\iota_*:K_i(BG)\ra K_i(C^*_r(G))\;(i=0,1)$ is rationally injective.
\end{Conj}

\subsubsection{Functoriality of assembly}

Here we discuss the functoriality properties of the assembly maps $\tilde{\mu}_G$ and $\mu_G$. The full $C^*$-algebra is fully functorial: every group homomorphism $\alpha:G\ra H$ induces a $*$-homomorphism denoted by the same letter: $\alpha:C^*(G)\ra C^*(H)$. It turns out that the maps $\tilde{\mu}$ are fully functorial (see \cite{MV03}), i.e. the following diagram commutes:
$$\left.\begin{array}{ccc}K_i^G(\underline{E}G) & \stackrel{\tilde{\mu}_G}{\longrightarrow} & K_i(C^*(G)) \\ & & \\ \alpha_*\downarrow&  & \downarrow \alpha_* \\ & & \\ K_i(\underline{E}H) & \stackrel{\tilde{\mu}_H}{\longrightarrow} & K_i(C^*(H))\end{array}\right.$$

It is not known whether $\mu_G$ is functorial. However, since the reduced group $C^*$-algebra is functorial for injective group homomorphisms (every injective homomorphism $G\hookrightarrow H$ induces an injective $*$-homomorphism $C^*_r(G) \hookrightarrow C^*_r(H)$), the map $\mu_G$ is certainly functorial with respect to injective group homomorphisms.

Of course, on the class of amenable groups $\mu_G$ is fully functorial as $C^*(G)=C^*_r(G)$. However, for non-amenable groups $C^*_r(G)$ is often simple (this is the case for free groups, or more generally for lattices in simple Lie groups, see \cite{BeCoHa}), so $C^*_r(G)$ is generally not functorial for non-injective group homomorphisms.

\subsubsection{The obstruction of property (T)}\label{prop(T)}

The good functorial properties of $\tilde{\mu}_G$ raise a natural question: could it be that $\tilde{\mu}_G$ is an isomorphism? If yes, because $\mu_G=(\lambda_G)_*\circ\tilde{\mu}_G$ and $\mu_G$ should be an isomorphism by Conjecture \ref{BCconj}, this would imply that $(\lambda_G)_*:K_i(C^*(G))\ra K_i(C^*_r(G))$ is an isomorphism. However, this is not true in general, as was observed by Connes back in 1980:

\begin{Prop}\label{(T)} If the non-compact, locally compact group $G$ has Kazhdan's property (T), then $(\lambda_G)_*:K_0(C^*(G))\ra K_0(C^*_r(G))$ is not injective.
\end{Prop}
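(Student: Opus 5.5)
The plan is to exhibit a nonzero class in $K_0(C^*(G))$ that is killed by $(\lambda_G)_*$, namely the class of the Kazhdan projection. The first step is to construct this projection. Property (T) says that the trivial representation $1_G$ is isolated in the unitary dual of $G$ for the Fell topology. Since the dual of $C^*(G)$ is the unitary dual of $G$, this isolation gives a central projection $p_G\in C^*(G)$. For every unitary representation $\pi$ of $G$ on $\HH$, the operator $\pi(p_G)$ is the orthogonal projection onto the subspace $\HH^G$ of $G$-invariant vectors. To build $p_G$ concretely, I would take a compactly supported probability density $f\in C_c(G)$ with $f(g^{-1})=f(g)$ whose support contains a Kazhdan set. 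Property (T) then yields $\varepsilon>0$ such that, for every $\pi$, the spectrum of $\pi(f)$ on $(\HH^G)^\perp$ lies in $[-1,1-\varepsilon]$, while $\pi(f)=1$ on $\HH^G$. Hence $1$ is isolated in the spectrum of $f$ in $C^*(G)$, and continuous functional calculus with a function equal to $1$ near $1$ and $0$ on $[-1,1-\varepsilon]$ gives $p_G$. This spectral gap step is the main technical point: one must pass from the almost-invariant-vectors formulation of (T) to a uniform gap for $\pi(f)$, which holds because a vector almost fixed by $\pi(f)$ is almost invariant under the support of $f$.

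The second step is to show that $[p_G]\neq 0$ in $K_0(C^*(G))$. Let $\epsilon:C^*(G)\to\C$ be the trivial representation. Then $\epsilon(p_G)=1$, since $\C^G=\C$. So $\epsilon_*[p_G]=[1]\neq 0$ in $K_0(\C)=\Z$ (Example \ref{ExK0}), and therefore $[p_G]\neq 0$.

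The third step is to show that $\lambda_G(p_G)=0$. This is the projection onto the $G$-invariant vectors of $L^2(G)$. A $G$-invariant vector in $L^2(G)$ is a constant function, and since $G$ is non-compact its Haar measure is infinite, so the only square-integrable constant is $0$. Hence $L^2(G)^G=0$ and $\lambda_G(p_G)=0$. It follows that $(\lambda_G)_*[p_G]=[0]=0$, while $[p_G]\neq 0$, so $(\lambda_G)_*$ is not injective on $K_0$.

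I expect the only nontrivial ingredient to be the existence of $p_G$ in the full algebra, that is, translating the isolation of $1_G$ into a spectral gap. The remaining two steps are formal.
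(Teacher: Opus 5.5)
Your proof is correct and follows the paper's argument: the Kazhdan projection $p_G$ satisfies $\epsilon(p_G)=1$, so $[p_G]\neq 0$ in $K_0(C^*(G))$, while $\lambda_G(p_G)=0$ because $L^2(G)$ has no nonzero $G$-invariant vectors when $G$ is non-compact. The paper simply quotes the existence of $p_G$ as a known characterization of (T); your concrete construction is fine, except that for non-discrete $G$ you should take $f$ bounded below on a neighbourhood of $Q\cup\{e\}$ ($Q$ a Kazhdan set) rather than merely having $Q$ in its support, because smallness of $\int_G f(g)\|\pi(g)\xi-\xi\|^2\,dg=2-2\langle\pi(f)\xi|\xi\rangle$ only controls $\xi$ on most of the support, and a short extra step (writing $q=(qv)v^{-1}$ with $v$ and $qv$ both in the good set) is needed for uniform control on $Q$.
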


V. Lafforgue proved in \cite{Laff} that $\mu_G$ is an isomorphism for (countable) hyperbolic groups, some of which have property (T) (e.g. uniform lattices in the Lie group $Sp(n,1),\,(n\geq 2)$). For those, it follows from Proposition \ref{(T)} that $\tilde{\mu}_G$ is NOT onto.

\medskip
{\bf Proof of Proposition \ref{(T)}:} The group $G$ has property (T) if and only if there exists a projection $p_G\in C^*(G)$ such that, for every unitary representation $\pi$ of $G$, the projection $\pi(p_G)$ is the orthogonal projection of the subspace of $\pi(G)$-fixed vectors. Then
\begin{itemize}
\item Taking for $\pi$ the trivial one-dimensional representation $\epsilon$ of $G$, we get $\epsilon(p_G)=1$ which shows that $[p_G]$ has infinite order in $K_0(C^*(G))$.
\item Taking for $\pi$ the regular representation $\lambda_G$, which has no non-zero $G$-fixed vectors as $G$ is non-compact, we get $\lambda_G(p_G)=0$, completing the proof. $\blacksquare$
\end{itemize}

In a paper in the same series, P. Julg \cite{JulgK-am} discusses when $(\lambda_G)_*: K_i(C^*(G))\ra K_i(C^*_r(G))$ is an isomorphism, leading to the concept of {\it K-amenability}.

\section{The ``Dirac-dual Dirac'' method}

The ``Dirac-dual Dirac'' method, introduced by Kasparov \cite{KasConsp} in special cases and formalized by Tu \cite{Tu00}, is one of the most powerful tools to attack Conjectures \ref{BCconj} and \ref{BCcoeff}.

\subsection{The abstract method}

We need the concept of {\it proper} $G-C^*$-algebra $D$. Intuitively, this means that if we identify the center $Z(D)$ with $C_0(X)$ for some locally compact space $X$, the $G$-action on $X$ is proper. More precisely:

\begin{Def} Let $X$ be a $G$-space. A {\bf $G-X-C^*$-algebra} is a $G-C^*$-algebra $D$ equipped with a $G$-equivariant homomorphism $C_0(X)\ra Z(M(D))$ (the center of
the multiplier algebra of $D$) which moreover is non-degenerate, i.e $\overline{span(C_0(X)D)}=D$. The $G-C^*$-algebra $D$ is then {\bf proper} if there exists a proper $G$-space $X$ such that $D$ is a $G-X-C^*$-algebra.
\end{Def}

It is a crucial result due to Chabert, Echterhoff and Meyer \cite{CEM}, that the Baum-Connes conjecture with proper coefficients holds for arbitrary groups\footnote{For $D$ commutative and $G$ a connected Lie group, this result was obtained previously in \cite{Val}.}.

\begin{Thm}\label{Tuproper} If $D$ is a proper $G-C^*$-algebra, then the map $\mu_{G,D}:KK_i^G(\underline{E}G,D)\ra K_i(D\rtimes_r G)\;(i=0,1)$ is an isomorphism.
\end{Thm}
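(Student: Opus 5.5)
The plan follows the Chabert--Echterhoff--Meyer strategy and reduces in three stages to the case where $G$ is compact. There the statement is the Green--Julg theorem. The key tool is that both sides of the assembly map are functorial in the coefficient algebra. We also use compatibility with induction from closed subgroups: for $H\le G$ closed and $B$ an $H$-$C^*$-algebra, $\mu_{G,\mathrm{Ind}_H^G B}$ identifies with $\mu_{H,B}$.

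\textbf{Step 1: reduction to $G$-compact $X$.} Let $D$ be a $G$-$X$-$C^*$-algebra with $X$ proper. Write $X$ as the increasing union of open $G$-invariant subsets $U$ with $G$-compact closure. Then $D$ is the inductive limit of the ideals $C_0(U)D$. The reduced crossed product commutes with inductive limits of ideals, and $K$-theory commutes with inductive limits. On the left, $KK^G_i(\underline{E}G,\cdot)$ is a limit over $G$-compact pieces, and for proper coefficients it also commutes with such limits; this is the technical point, handled via the representable-$KK$ description. So we may assume $X$ is $G$-compact.

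\textbf{Step 2: reduction to a slice.} By Definition~\ref{properaction}, $X$ is covered by finitely many $G$-invariant open sets $U_k$, each carrying an equivariant map $U_k\to G/H_k$ with $H_k$ compact. The six-term exact sequences on both sides, together with the five lemma and Mayer--Vietoris for the ideals $C_0(U_k)D$, reduce us to $D$ being a $G$-$G/H$-algebra. Both sides are exact for such extensions; on the left this uses that proper algebras behave well, which is part of the subtlety.

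\textbf{Step 3: induction and the compact case.} A $G$-$G/H$-algebra is induced: $D\cong\mathrm{Ind}_H^G D_0$, where $D_0$ is its fibre at $eH$. Green's imprimitivity theorem gives $D\rtimes_r G\sim_M D_0\rtimes H$. On the topological side, $KK^G_i(\underline{E}G,\mathrm{Ind}_H^G D_0)\cong KK^H_i(\underline{E}G,D_0)$. By the Corollary to Proposition~\ref{universal}, $\underline{E}G$ is a model of $\underline{E}H$. Since $H$ is compact, a point is also a model, so this group equals $KK^H_i(\C,D_0)$. The assembly map then becomes the Green--Julg isomorphism $KK^H_i(\C,D_0)\cong K_i(D_0\rtimes H)$.

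\textbf{Main obstacle.} The hard part is verifying that the assembly map is compatible with these identifications. One must check that the induction isomorphism on the left and Green imprimitivity on the right intertwine $\mu_{G,D}$ with $\mu_{H,D_0}$. This requires tracking the canonical projection $[p_X]$ of Lemma~\ref{canon} through the Morita equivalence. I would first try choosing the cutoff $h$ adapted to the slice $G\times_H(\cdot)$, which makes $p_X$ correspond to the unit projection of $C(\mathrm{pt})\rtimes H$.
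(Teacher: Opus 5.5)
The paper gives no proof of Theorem~\ref{Tuproper}. It is quoted from Chabert--Echterhoff--Meyer \cite{CEM}, with \cite{Val} covering commutative $D$ and connected Lie $G$, so there is no internal argument to compare against. Your outline is the standard route: exhaust $X$, glue slices by Mayer--Vietoris, pass from a $G$-$G/H$-algebra to the compact group $H$ by induction, and finish with Green--Julg. Its architecture is sound.

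Be clear, though, that what you call the main obstacle is not an extra verification. It \emph{is} the compatibility of assembly with induction (Chabert--Echterhoff; Oyono-Oyono for discrete $G$), which you already took as a tool in your first paragraph. If you cite it, Step~3 is finished: for compact $H$ with $\underline{E}H=\mathrm{pt}$, the projection of Lemma~\ref{canon} is the averaging projection in $C^*(H)$, and $\mu_{H,D_0}$ is precisely the Green--Julg map. If you do not cite it, the missing content lies mostly on the left-hand side, namely constructing the induction map $KK^H_i(\underline{E}G,D_0)\to KK^G_i(\underline{E}G,\mathrm{Ind}_H^G D_0)$ and proving it bijective. Choosing $h$ adapted to the slice only matches $[p_X]$ with Green's imprimitivity bimodule on the crossed-product side.

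Two other steps need a correct justification. In Step~2 the subtlety sits on the right, not the left. The functor $B\mapsto KK^G_i(\underline{E}G,B)$ is half-exact for arbitrary extensions, whereas $B\mapsto K_i(B\rtimes_r G)$ is not when $G$ is non-exact. What rescues the right-hand side is that every algebra in your sequences is a $G$-$X$-algebra over a proper $X$. Hence $B\rtimes G=B\rtimes_r G$, and exactness of full crossed products applies. You should say this explicitly, together with the fact that assembly commutes with the boundary maps.

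In Step~1, continuity of the left-hand side under increasing unions of ideals is not formal. For a fixed $Y$, $KK^G_i(C_0(Y),\cdot)$ need not commute with such limits; this fails already for $G$ trivial, $Y$ a Cantor set and $\C^n\nearrow c_0(\N)$. Continuity must therefore be extracted from the colimit over $Y$, for instance via finite $G$-CW models when $G$ is discrete. ``Handled via the representable-$KK$ description'' is not yet an argument.
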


Consider now the following assumption on a given group $G$

\medskip
{\bf Assumption (*):} There exists a proper $G-C^*$-algebra $D$ and elements $\alpha\in KK_i^G(D,\C)$ (``the Dirac element'') and $\beta\in KK_i^G(\C,D)$ (``the dual-Dirac element'') such that $\beta\otimes_D\alpha= 1_\C$ in $KK_0^G(\C,\C)$.

\medskip 
Assumption (*) is actually very strong, as it implies the Baum-Connes conjecture with coefficients! Indeed:

\begin{Prop}\label{Strongass} Under Assumption (*), Conjecture \ref{BCcoeff} holds.
\end{Prop}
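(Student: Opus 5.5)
The plan is the standard ``retraction'' argument: under Assumption (*), the assembly map $\mu_{G,B}$ is a direct summand of the assembly map with proper coefficients $\mu_{G,D\otimes B}$, which is an isomorphism by Theorem \ref{Tuproper}. Fix a separable $G$-$C^*$-algebra $B$. Set $\gamma:=\alpha\otimes_\C\beta\in KK_0^G(D,D)$ (Kasparov product over $\C$, taken in the order that lands in $KK^G(D,D)$). By associativity of the product (Theorem \ref{KKproduct}, equivariant version) and $\beta\otimes_D\alpha=1_\C$, we get $\gamma\otimes_D\gamma=\gamma$. We will not really need $\gamma$; the key identity is $\beta\otimes_D\alpha=1_\C$.

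The first step is to tensor with $B$. Apply extension of scalars (Proposition \ref{extension}) with $C=B$ to obtain $\tau_B(\beta)\in KK^G(B,D\otimes B)$ and $\tau_B(\alpha)\in KK^G(D\otimes B,B)$. Compatibility with the Kasparov product gives $\tau_B(\beta)\otimes_{D\otimes B}\tau_B(\alpha)=\tau_B(1_\C)=1_B$. Note that $D\otimes B$ is again proper: $C_0(X)\to Z(M(D))$ induces a non-degenerate central map into $M(D\otimes B)$.

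The second step is naturality of assembly in the coefficient. Any $z\in KK_0^G(B_1,B_2)$ induces a map $z_*:KK_i^G(\underline{E}G,B_1)\to KK_i^G(\underline{E}G,B_2)$ by right Kasparov product with $z$. It also induces $j_G(z)_*:K_i(B_1\rtimes_rG)\to K_i(B_2\rtimes_rG)$ via reduced descent. We claim that $\mu_{G,B_2}\circ z_*=j_G(z)_*\circ\mu_{G,B_1}$. This follows from the definition $\mu_{G,B}(x)=[p_X]\otimes j_G(x)$, together with the multiplicativity $j_G(x\otimes_{B_1}z)=j_G(x)\otimes_{B_1\rtimes_r G}j_G(z)$ and associativity of the product. (Here the paper's definition via $\lambda_*$ of the full version agrees with using reduced descent directly, by the commutative diagram for descent; I would work with $j_G$ throughout.) Functoriality ($(z\otimes z')_*=z'_*\circ z_*$, $(1_B)_*=\mathrm{id}$) holds on both sides for the same reasons.

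The final step is a diagram chase. Write $\beta_B=\tau_B(\beta)$ and $\alpha_B=\tau_B(\alpha)$. Since $\beta_B\otimes\alpha_B=1_B$, the composition $(\alpha_B)_*\circ(\beta_B)_*$ is the identity both on $KK^G_i(\underline{E}G,B)$ and on $K_i(B\rtimes_rG)$. Thus $\mu_{G,B}$ is a retract of $\mu_{G,D\otimes B}$ in the category of arrows:
$$\mu_{G,B}=j_G(\alpha_B)_*\circ\mu_{G,D\otimes B}\circ(\beta_B)_*,\qquad \mu_{G,D\otimes B}\circ(\beta_B)_*=j_G(\beta_B)_*\circ\mu_{G,B}.$$
For injectivity: if $\mu_{G,B}(x)=0$, then $\mu_{G,D\otimes B}((\beta_B)_*x)=0$, so $(\beta_B)_*x=0$ by Theorem \ref{Tuproper}, hence $x=(\alpha_B)_*(\beta_B)_*x=0$. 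For surjectivity: given $y$, write $j_G(\beta_B)_*y=\mu_{G,D\otimes B}(w)$; then $y=j_G(\alpha_B)_*j_G(\beta_B)_*y=\mu_{G,B}((\alpha_B)_*w)$. A retract of an isomorphism is an isomorphism.

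The main obstacle is the naturality step. It needs careful bookkeeping of the equivariant Kasparov product with $\underline{E}G$ as a direct limit over $G$-compact $X$, checking compatibility with the limit maps, and the multiplicativity of reduced descent. Parity is a minor issue: if $\alpha,\beta$ are odd, I would absorb it via Bott periodicity, or simply note that the identity $\beta\otimes\alpha=1$ forces the total degree to be $0$, and the same chase works with a degree shift.
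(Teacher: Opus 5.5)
Your proof is correct and is essentially the paper's argument: the paper uses the same commutative diagram, with horizontal maps given by $\otimes\tau_B(\beta)$, $\otimes\tau_B(\alpha)$ and their descents, and with middle vertical map $\mu_{G,B\otimes D}$, which is an isomorphism by Theorem \ref{Tuproper}; it reads off injectivity and surjectivity exactly as you do. Your additional justifications (naturality of assembly in the coefficient via multiplicativity of $j_G$, properness of $D\otimes B$, the parity remark) supply details the paper takes for granted, and the auxiliary element $\alpha\otimes_\C\beta$ is, as you say, not needed.
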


\bpr Let $B$ be a $G-C^*$-algebra. Write $B\otimes D$ for $B\otimes_{min} D$. We have the following commutative diagram:
$$\begin{array}{ccccc}KK_i^G(\underline{E}G,B) & \stackrel{\otimes_B \tau_B(\beta)}{\longrightarrow} & KK_i^G(\underline{E}G,B\otimes D) & \stackrel{\otimes_{B\otimes D}\tau_B(\alpha)}{\longrightarrow} & KK_i^G(\underline{E}G,B) 
\\ \downarrow\mu_{G,B} &  &\simeq\downarrow\mu_{G,B\otimes D}  &  & \downarrow\mu_{G,B}  
\\K_i(B\rtimes_r G) & \stackrel{\otimes_{B\rtimes_r G}j_G(\tau_B(\beta))}{\longrightarrow} & K_i((B\otimes D)\rtimes_rG) & \stackrel{\otimes_{(B\otimes D)\rtimes_rG}j_G(\tau_B(\alpha))}{\longrightarrow} & K_i(B\rtimes_r G)
\end{array}$$
The central vertical map is an isomorphism as $B\otimes D$ is a proper $G-C^*$-algebra, so Proposition \ref{Tuproper} applies. Now Assumption (*) implies that compositions of horizontal maps are isomorphisms, at the top and bottom line. Injectivity of $\mu_{G,B}$ then follows from the left hand part of the diagram, while surjectivity follows from the right hand part. \epr

\medskip
Take $B=\C$ in the previous proof and observe that the same proof would work with $\mu_G$ replaced by $\tilde{\mu}_G$, so that Assumption (*) also implies that $\tilde{\mu}_G$ is an isomorphism. From the discussion in subsection \ref{prop(T)}, this implies that $(\lambda_G)_*$ is an isomorphism, which does not hold e.g. for property (T) groups (see Proposition \ref{(T)}). This shows that Assumption (*), although satisfied in some interesting cases (see Theorem \ref{gamma=1} below) is actually too strong. J.-L. Tu \cite{Tu00} came up with the following substitute.

\begin{Def}\label{gamma} A {\bf $\gamma$-element} is an element $\gamma\in KK_0^G(\C,\C)$ such that
\begin{enumerate}
\item There exists a proper $G-C^*$-algebra $D$, and elements $\alpha\in KK_i^G(D,\C)$ (``Dirac element''), $\beta\in KK_i^G(\C,D)$ (``dual-Dirac element'') such that $\gamma=\beta\otimes_D \alpha$.
\item For every compact subgroup $F$ of $G$, the image of $\gamma$ under the restriction map $Rest_G^F:KK_0^G(\C,\C)\ra R(F)$ is the trivial one-dimensional representation $\epsilon_F$.
\end{enumerate}\end{Def}

The second condition in the above definition guarantees the non-triviality of a $\gamma$-element. Assumption (*) means that the unit $1_\C$ of $KK_0^G(\C,\C)$ is a $\gamma$-element. The following result is due to Tu.

\begin{Thm}\label{Tu}\begin{enumerate}
\item If a $\gamma$-element exists in $KK_0^G(\C,\C)$, then it is unique and moreover it is an idempotent in $KK_0^G(\C,\C)$.
\item Assume that a $\gamma$-element exists. Then
\begin{enumerate}
\item For any $G-C^*$-algebra $B$, the map $\mu_{G,B}$ is injective. In particular the strong Novikov conjecture (Conjecture \ref{SNC}) holds. 
\item For a $G-C^*$-algebra $B$, the map $\mu_{G,B}$ is surjective if and only if the map $\otimes_{B\otimes_r G} \tau_{B\rtimes_r G}(\gamma)$ is the identity on $K_i(B\rtimes_r G)\;(i=0,1)$.
\end{enumerate}\end{enumerate}\end{Thm}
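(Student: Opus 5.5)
The proof rests on one key fact, which I would isolate first. \emph{Claim (Tu):} if $D$ is a proper $G$-$C^*$-algebra and $x\in KK_0^G(\C,\C)$ satisfies $Rest_G^F(x)=\epsilon_F$ for every compact subgroup $F$, then $\tau_D(x)=1_D$ in $KK_0^G(D,D)$.

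To prove the claim I would argue locally. Using the $X$-structure of $D$, with $X$ proper, and Definition \ref{properaction}, I would reduce to the case where $X$ is induced from a compact subgroup, $X=G\times_F Y$. There $KK^G$ of $D$ is computed by $KK^F$ via the induction isomorphism. The element $\tau_D(x)$ then only sees $Rest_G^F(x)=\epsilon_F=1$. To globalize I would use a partition of unity over $G\backslash X$ (a Mayer--Vietoris / cutoff argument, compatible with the limit over $G$-compact pieces). This claim is the main obstacle: making the localization rigorous for a general proper algebra, not just a commutative one, is delicate. I would follow Tu's approach via $KK^{G\ltimes X}$ (groupoid equivariant KK), where $\tau_D$ factors through $KK^{G\ltimes X}(C_0(X),C_0(X))$. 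In that setting the condition on compact subgroups is exactly what makes $x$ act trivially, because the groupoid is proper.

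With the claim in hand, part 1 follows quickly. Since $\gamma=\beta\otimes_D\alpha$, we can write $\gamma\otimes\gamma=\beta\otimes_D\tau_D(\gamma)\otimes_D\alpha$ using the commutativity and compatibility of $\tau$ with products (Proposition \ref{extension}). The claim gives $\tau_D(\gamma)=1_D$, so $\gamma^2=\gamma$. For uniqueness, let $\gamma'=\beta'\otimes_{D'}\alpha'$ be another $\gamma$-element. The same manipulation gives $\gamma\gamma'=\beta'\otimes\tau_{D'}(\gamma)\otimes\alpha'=\gamma'$, and symmetrically $\gamma\gamma'=\gamma'\gamma=\gamma$, using that $KK_0^G(\C,\C)$ is commutative. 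Hence $\gamma=\gamma'$.

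For part 2, I would reuse the diagram from the proof of Proposition \ref{Strongass}. The middle vertical arrow is an isomorphism by Theorem \ref{Tuproper}. The top composite is now multiplication by $\tau_B(\gamma)$ on $KK_i^G(\underline{E}G,B)=\lim KK_i^G(C_0(X),B)$. This composite is the identity: $C_0(X)$ is proper, so the claim gives $\tau_{C_0(X)}(\gamma)=1$, and the left and right actions of $KK_0^G(\C,\C)$ agree. The bottom composite is right multiplication by $j_G(\tau_B(\gamma))$, which I write as $\tau_{B\rtimes_rG}(\gamma)$ up to the notation of the statement. A diagram chase then gives both assertions. For (a), if $\mu_{G,B}(x)=0$, the left square gives $x=x\otimes\tau_B(\gamma)=0$, so $\mu_{G,B}$ is injective; the strong Novikov conjecture follows with $B=\C$ and rational injectivity of $\iota_*$. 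For (b), the image of $\mu_{G,B}$ equals the image of the bottom composite, since the composite factors through the isomorphism $\mu_{G,B\otimes D}$ and the top composite is the identity. That bottom composite is an idempotent, so it is surjective iff it is the identity, and this is exactly the stated criterion for surjectivity of $\mu_{G,B}$.
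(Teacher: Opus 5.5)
\paragraph{The failing step.} Your Claim is false when you assume only $Rest_G^F(x)=\epsilon_F$. The localization you sketch breaks exactly at the globalization step you flagged.

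Take $G=\Z^2$ acting on $\R^2$ by translations, and $D=C_0(\R^2)$. By Kasparov (Theorem \ref{gamma=1}), $\beta\otimes_D\alpha=1_\C$. Since $\tau_D(y)\otimes_D\alpha=\alpha\otimes_\C y$, every $y\in KK_0^G(\C,\C)$ satisfies $y=\beta\otimes_D\tau_D(y)\otimes_D\alpha$. So if your Claim held, every $y$ with $Rest_G^{\{e\}}(y)=1$ would equal $\beta\otimes_D\alpha=1$.

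Now let $x_1$ be the class of $(\ell^2(\Z^2),\ell^2(\Z^2),F)$. Put the regular representation on both sides and let $F$ be multiplication by $m\mapsto w_m=(m_1+im_2)(1+|m|^2)^{-1/2}$.
\begin{itemize}
\item This is a cycle: $F^*F-1$, $FF^*-1$ and $g\cdot F-F$ are multiplications by functions vanishing at infinity.
\item Its restriction to $\{e\}$ is $\mathrm{Ind}(F)=1-1=0$.
\item Yet $x_1\neq 0$. For discrete $G$, cycles for $KK^G_0(\C,\C)$ are exactly Fredholm modules over $C^*(G)$, so index pairings with $K_0(C^*(G))$ are defined on $KK_0^G(\C,\C)$. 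After Fourier transform, $x_1$ is the $K$-homology class of $\bar\partial$ on the dual torus, which pairs to $\pm1$ with the Bott class in $K_0(C^*(\Z^2))\cong K^0(\T^2)$.
\end{itemize}
Thus $x=1+x_1$ satisfies your hypotheses, while $\tau_D(x)\neq 1_D$.

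The groupoid picture does not rescue this. $\Z^2\ltimes\R^2$ is Morita equivalent to the space $\T^2$, and $p^*(x)-1$ becomes a non-zero rank-zero class in $K^0(\T^2)$, i.e.\ a non-zero multiple of the reduced Bott class. It vanishes on every disc, that is on every tube where your local argument applies, but not globally. Local triviality gives only nilpotence, not vanishing.

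\paragraph{The statement itself needs strengthening.} This is not only a defect of your write-up.
\begin{itemize}
\item Part 1 fails as stated. Both $1$ and $1+x_1=\beta\otimes_D(\alpha\otimes_\C(1+x_1))$ satisfy Definition \ref{gamma}, since $\{e\}$ is the only compact subgroup of $\Z^2$. So uniqueness fails.
\item Part 2(b) fails too. For the factorization above, the top composite is $z\mapsto z\otimes_\C(1+x_1)$. It moves $\alpha\in K_0^G(\underline{E}G)$, because $\beta\otimes_D(\alpha\otimes_\C x_1)=x_1\neq 0$. Hence $1+x_1$ does not act as the identity on $K_0(C^*_r(\Z^2))$, even though $\mu_{\Z^2}$ is onto.
\end{itemize}
The paper's sketch rests on the same unproved point: ``uniqueness'', and ``it is shown that the composition of the maps on the top row is the identity''.

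\paragraph{The repair.} The standard fix, as in Tu's formulation, is to strengthen condition 2 of Definition \ref{gamma} to
\[
p_X^*(\gamma)=1 \quad\text{in } \mathcal{R}KK^G(X;\C,\C)=KK^{G\ltimes X}(C_0(X),C_0(X)).
\]
It suffices to impose this for $X=\underline{E}G$; it then holds for every proper $X$ by pulling back along $X\to\underline{E}G$. It implies the restriction condition, by restricting to an orbit $G/F$.

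For a $G$-$X$-algebra $D$, $\tau_D(\gamma)$ is the image of $p_X^*(\gamma)$ under the natural map $\mathcal{R}KK^G(X;\C,\C)\to KK^G(D,D)$. So $\tau_D(\gamma)=1_D$ holds for free, and the rest of your argument is correct as written.

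With this fix, your part 2 is the paper's diagram chase. In part 1 you get $\gamma^2=\beta\otimes_D\tau_D(\gamma)\otimes_D\alpha=\gamma$ directly. The paper instead deduces idempotency from uniqueness, applied to the $\gamma$-element $\gamma\otimes_\C\gamma$.
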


Some comments on the proof. If we have a $\gamma$-element $\gamma$, then $\gamma\otimes_\C \gamma$ is still a $\gamma$-element, so $\gamma\otimes_\C \gamma=\gamma$ follows from uniqueness of $\gamma$. For the second part, consider the same commutative diagram as in the proof of Proposition \ref{Strongass}. It is shown that the composition of the maps on the top row is the identity, hence injectivity of $\mu_{G,B}$. The criterion for surjectivity follows similarly.

\subsection{Examples and results}

Let us give the classical example of a Dirac element.

\begin{Ex}\label{Dirac} Let $M$ be a smooth, complete Riemannian manifold, and let $G\curvearrowright M$ be a proper, isometric, $G$-compact action. Take $\HH^\pm=L^2(\Lambda^\pm TM), P=d+d^*$ and $F=P(1+P^*P)^{-1/2}$. Take moreover $D=C_0(M)$, a proper $G-C^*$-algebra. By Example \ref{equivdeRham} we have the element $\alpha=(\HH^+,\HH^-,F)\in KK_0(C_0(M),\C)$, that we call the Dirac element associated with $M$.
\end{Ex}

Here is now the classical example of a dual-Dirac element.

\begin{Ex}\label{DualDirac} In the situation of Example \ref{Dirac}, assume moreover that $M$ is simply connected with non-positive curvature (so that $M$ is a model for $\underline{E}G$). By the Cartan-Hadamard theorem, any two points $x,y\in M$ are connected by a unique geodesic segment that we denote by $[x,y]$. Let $x_0$ be a base-point in $M$, set $\rho(x)=\frac{d_M(x_0,x)^2}{2}$, where $d_M$ denotes the Riemannian distance on $M$. Denote by $C_0(\Lambda^\pm TM)$ the set of continuous differential forms vanishing at infinity on $M$, and view it as a Hilbert $C^*$-module over $C_0(M)$. View the differential $d\rho$ as a 1-form and define the (unbounded) operator $P:C_0(\Lambda^+TM)\ra C_0(\Lambda^-TM)$ as
$$P=ext(d\rho) + int(d\rho),$$
where (for $\omega\in C_0(\Lambda^+ TM)$):  $ext(d\rho)(\omega)=d\rho\wedge\omega$ and $int(d\rho)(\omega)$ is the contraction of the form $\omega$ on the gradient field $\nabla\rho$ of $\rho$. Then $P^*P$ is just pointwise multiplication by the function $\|d\rho\|^2=\|\nabla\rho\|^2$. But $\nabla\rho(x)$ is a tangent vector at $x$ to the geodesic $[x_0,x]$, pointing outwards from $x_0$, of norm $d_M(x_0,x)$. Hence $(1+P^*P)^{-1}$ is pointwise mutiplication by $\frac{1}{1+d_M(x_0,\cdot)^2}$ which is continuous and goes to 0 at infinity of $M$ and therefore defines a compact operator in the sense of $C^*$-modules.

For $g\in G$, the operator $gPg^{-1}$ is the similar operator defined with respect to the base-point $x'_0=gx_0$, using the function $\rho'(x)=\frac{d_M(x_0',x)^2}{2}$.

\begin{figure}[h]
\centering

\begin{tikzpicture}[
    scale=1.2,
    point/.style={circle, fill=black, inner sep=1.7pt},
    blackcurve/.style={black, thick},
    redarrow/.style={red!75!black, thick, -{Latex[length=3mm]}},
    every node/.style={font=\small}
]

\coordinate (x)  at (0,0);
\coordinate (x0) at (-3.2,-0.25);
\coordinate (xp0) at (0.35,-2.0);

\coordinate (g)  at (2.45,0.75);
\coordinate (gp) at (0.4,2.17);

\draw[blackcurve]
    (x0) .. controls (-2.25,-0.35) and (-1.0,-0.20) .. (x);

\draw[blackcurve]
    (xp0) .. controls (0.00,-1.35) and (-0.05,-0.55) .. (x);

\draw[redarrow] (x) -- (g);
\draw[redarrow] (x) -- (gp);

\draw[red!75!black, thick]
    (0.55,0.17) arc[start angle=17, end angle=80, radius=0.58];

\node[red!75!black] at (0.6,0.6) {$\theta_x$};

\node[point] at (x0) {};
\node[point] at (xp0) {};
\node[point] at (x) {};

\node[below left] at (x0) {$x_0$};
\node[below right] at (xp0) {$x_0'$};
\node[below right] at (x) {$x$};

\node[red!75!black] at (-0.3,1.65) {$\nabla \rho'(x)$};
\node[red!75!black] at (1.9,0.2) {$\nabla \rho(x)$};

\end{tikzpicture}

\caption{$P$ and $gPg^{-1}$}
\end{figure}

One checks that $\|\nabla(\rho-\rho')\|$ stays bounded on $M$: to see this, use the fact that the angle $\theta_x$ between $\nabla\rho(x)$ and $\nabla\rho'(x)$, goes to 0 when $x$ goes to infinity in $M$). As a consequence, defining $F=P(1+P^*P)^{-1/2}$, we may define the element $\beta=(C_0(\Lambda^+TM),C_0(\Lambda^- TM),F)\in KK_0^G(\C,C_0(M))$ and we call it the dual-Dirac element.

This holds for example if $M$ is either a (finite-dimensional) Euclidean space of a Riemannian symmetric space of the non-compact type. 
\end{Ex}

If $M$ satisfies the assumptions of Examples \ref{Dirac} and \ref{DualDirac}, we may form 
$$\gamma=\beta\otimes_{C_0(M)} \alpha \in KK_0^G(\C,\C):$$
this is a $\gamma$-element (see \cite{KasConsp}) described, in the unbounded operator picture, by $(L^2(\Lambda^+TM),L^2(\Lambda^-TM),D)$ with
\begin{equation}\label{DdualD}
D=d+d^*+ext(d\rho)+int(d\rho).
\end{equation}

\begin{Rem} When $M$ is the $n$-dimensional Euclidean space $\mathbb{E}^n$, the operator $D^2$ is very much related to the harmonic oscillator operator on $\mathbb{E}^n$. Indeed, if $\omega$ is a $k$-form, we have (for $x\in\mathbb{E}^n$):
$$D^2\omega(x)=\Delta\omega(x)+\|x\|^2\omega(x)+ (2k-n)\omega(x).$$
\end{Rem}

\begin{Thm}\label{gamma=1} In the setting of Examples \ref{Dirac} and \ref{DualDirac}, the equality $\gamma=1_{\C}$ holds when:
\begin{itemize}
\item $M=\mathbb{E}^n$ (see Kasparov \cite{KasConsp}).
\item $M=\mathbb{H}^n(\R)$, the $n$-dimensional real hyperbolic space (see Kasparov \cite{KasLor}).
\item $M=\mathbb{H}^n(\C)$, the $n$-dimensional complex hyperbolic space (see Julg-Kasparov \cite{JulKas}).
\end{itemize}
\end{Thm}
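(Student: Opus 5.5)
I will treat only the Euclidean case $M=\mathbb{E}^n$ in detail, since the two hyperbolic cases follow the same pattern with much harder analysis. The strategy is Kasparov's rotation trick together with the harmonic oscillator computation in the Remark above. By (\ref{DdualD}), $\gamma$ is represented by the unbounded cycle $(L^2(\Lambda^+T\mathbb{E}^n),L^2(\Lambda^-T\mathbb{E}^n),D)$ with $D=d+d^*+ext(d\rho)+int(d\rho)$. Here $\rho(x)=\|x\|^2/2$, so $d\rho=\sum x_i\,dx_i$.

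The first step is to compute the kernel of $D$. By the Remark, $D^2=\Delta+\|x\|^2+(2k-n)$ on $k$-forms. Writing a $k$-form as $\sum f_I\,dx_I$, this operator acts on each coefficient by the harmonic oscillator $\Delta+\|x\|^2$, whose spectrum is $\{n+2m:m\in\N\}$, shifted by $2k-n$. Hence $D^2\geq 2k$ on $k$-forms. The kernel of $D$ is therefore one-dimensional, spanned by the Gaussian $e^{-\|x\|^2/2}$ in degree $0$, which lies in $L^2(\Lambda^+)$. Moreover $D$ has compact resolvent, and the spectrum of $D^2$ is bounded away from $0$ on the orthogonal complement of this kernel.

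The second step is to recognize what this says equivariantly. The group $G$ acts properly and cocompactly by isometries, so it sits inside $\mathrm{Isom}(\mathbb{E}^n)=\R^n\rtimes O(n)$ after choosing the base point $x_0=0$. For the subgroup $O(n)$, the operator $D$ is exactly invariant and the Gaussian is $O(n)$-fixed. So, as an element of $KK^{O(n)}_0(\C,\C)=R(O(n))$, the cycle is homotopic, by the standard finite-dimensional reduction for compact-resolvent Fredholm cycles, to its kernel, which is the trivial representation. This gives $\gamma=1$ equivariantly for compact groups.

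The remaining and main step is to handle the non-compact translation part, where $D$ is only invariant up to bounded perturbations. I would try Kasparov's trick in the following form. The element $\gamma$ depends only on the class, and conjugating the base point $x_0\mapsto gx_0$ changes $D$ by a bounded operator, as in Example \ref{DualDirac}. I would therefore homotope the family of cycles defined with all base points to the one obtained from a rescaling $\rho_t=t\rho$ (with $t\to 0$), under which $\|g\cdot D_t-D_t\|\to 0$ uniformly on compacts of $G$. Alternatively, I would use the rotation homotopy on $\mathbb{E}^n\times\mathbb{E}^n$ that exchanges the two factors in $\beta\otimes\alpha\otimes\beta$, as in Bott periodicity, to show that $\alpha\otimes_\C\beta=1_{C_0(M)}$ modulo compatibility and hence that $\beta\otimes_D\alpha$ is idempotent and equal to $1$. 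I expect checking the $G$-equivariance conditions ($g\cdot F-F$ compact and $g\mapsto g\cdot F$ norm-continuous) along this homotopy to be the real obstacle. For the hyperbolic spaces this is exactly where Kasparov and Julg--Kasparov need delicate analysis, for example the complex of $K$-finite forms in the complex case, and I would simply cite \cite{KasLor, JulKas} there.
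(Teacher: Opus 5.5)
There is a genuine gap, and it sits where you suspected. The paper gives no argument for this theorem: it attributes the three cases to \cite{KasConsp}, \cite{KasLor} and \cite{JulKas}. So for the two hyperbolic cases your citation matches the text, and the issue is only the Euclidean case.

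Your first two steps are correct. Indeed $D^2=\sum_i a_i^*a_i+2k$ with $a_i=\partial_i+x_i$, so $\ker D=\C\,e^{-\|x\|^2/2}$ and there is a spectral gap. But the $O(n)$ computation only proves $Rest_G^K\gamma=\epsilon_K$ for compact $K$, which is condition 2 of Definition \ref{gamma}. That condition holds for every $\gamma$-element. For example, $Sp(n,1)$ with $n\geq 2$, acting on quaternionic hyperbolic space, has $\gamma\neq 1_\C$ by the property (T) obstruction of subsection \ref{prop(T)}. So this step gives no information towards $\gamma=1_\C$, and all the content lies in your third step, which you leave as two unexecuted suggestions.

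\begin{itemize}
\item The rotation route cannot work as stated. $\alpha\otimes_\C\beta=1_{C_0(M)}$ only yields $\gamma^2=\gamma$, and every $\gamma$-element is idempotent (Theorem \ref{Tu}). In Kasparov's Bott periodicity the logic runs the other way: the harmonic-oscillator identity $\beta\otimes\alpha=1$, for a \emph{linear} action, is the input of the rotation argument.
\item The rescaling route has no endpoint. As $t\to 0$, $D_t\to d+d^*$, which does not have compact resolvent on $L^2(\Lambda^*T\mathbb{E}^n)$. So the smallness of $\|g\cdot D_t-D_t\|=t\|b(g)\|$ is never turned into a cycle whose class you can read off. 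Here $g\cdot x=A_gx+b(g)$ with $A_g\in O(n)$.
\end{itemize}

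The missing idea is to move the deformation from the operator to the group action. Conjugating by the dilation $x\mapsto\sqrt{t}\,x$ identifies $D_t$ with $\sqrt{t}\,D$ and the cocycle $b$ with $\sqrt{t}\,b$. So keep $D$ fixed, and for $s\in[0,1]$ let $\pi_s$ be the representation of $G$ on $\HH=L^2(\Lambda^*T\mathbb{E}^n)$ induced by the affine action $x\mapsto A_gx+s\,b(g)$. This is still an action because $b$ is a cocycle.

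\begin{itemize}
\item Then $\pi_s(g)D\pi_s(g)^{-1}=D-s\,(ext(b(g))+int(b(g)))$, a bounded perturbation depending norm-continuously on $(s,g)$.
\item Since $D$ has compact resolvent, $F=D(1+D^2)^{-1/2}$ satisfies $\pi_s(g)F\pi_s(g)^{-1}-F\in\mathcal{K}(\HH)$, again norm-continuously in $(s,g)$.
\item Hence $F$ on $C([0,1],\HH)$, with $G$ acting fibrewise by $(\pi_s)_s$, is a cycle for $KK_0^G(\C,C[0,1])$. It joins $\gamma$ at $s=1$ to the cycle where $G$ acts only through $g\mapsto A_g$ at $s=0$.
\item At $s=0$, $F$ is exactly equivariant and its kernel is the $G$-fixed Gaussian in degree $0$. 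On the orthogonal complement $F$ is invertible and exactly equivariant, hence degenerate after the homotopy $F|F|^{-\tau}$, $\tau\in[0,1]$. So the class at $s=0$ is $1_\C$.
\end{itemize}

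This settles the Euclidean case for any group of affine isometries, and in particular for the $G$ of the theorem. Without it, your proposal proves only what is already built into the definition of a $\gamma$-element.
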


So Conjecture \ref{BCcoeff} holds for all groups acting properly isometrically on $M$ as in Theorem \ref{gamma=1}. Now $\gamma$-elements have been constructed in other situations.

\begin{Thm}\label{gammaexists} A $\gamma$-element exists for the following classes of groups.
\begin{itemize}
\item Groups acting properly on locally finite trees; here $\gamma=1_{\C}$ (see Pimsner \cite{Pimsner}).
\item Groups acting properly on locally finite Euclidean buildings (see Kasparov-Skandalis \cite{KaSk1}).
\item Hyperbolic groups (see Kasparov-Skandalis \cite{KaSk2}).
\item Groups with the Haagerup property (in particular amenable groups); here $\gamma=1_{\C}$ (see Higson-Kasparov \cite{HiKa}).
\end{itemize}\end{Thm}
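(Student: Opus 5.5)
Theorem \ref{gammaexists} collects four deep results from the literature, so a complete proof is out of reach here. My plan is a uniform strategy that fits the framework of Definition \ref{gamma}: in each case, build a proper $G$-$C^*$-algebra $D$, a Dirac element $\alpha\in KK_0^G(D,\C)$ and a dual-Dirac element $\beta\in KK_0^G(\C,D)$. Then verify the restriction condition $Rest^F_G(\beta\otimes_D\alpha)=\epsilon_F$ for every compact subgroup $F$. The model is Examples \ref{Dirac} and \ref{DualDirac}, where $D=C_0(M)$, $\beta$ is a Clifford multiplication by $d\rho$, and $\alpha$ is a de Rham/Dirac operator.

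For trees (Pimsner), I would take $D=C_0(X)$, where $X$ is the geometric realization of the tree, a model of $\underline{E}G$. Then:
\begin{itemize}
\item $\alpha$ is the combinatorial Dirac element on $\ell^2(\text{vertices})\oplus\ell^2(\text{edges})$, from the boundary map.
\item $\beta$ is built from a base vertex $x_0$: send each vertex $x\neq x_0$ to the edge of $[x_0,x]$ adjacent to $x$. This is a unitary off $x_0$, so it gives a Fredholm operator.
\item Changing $x_0$ to $gx_0$ alters the operator only on the finitely many vertices of $[x_0,gx_0]$, so it is a compact perturbation.
\end{itemize}
The product $\beta\otimes\alpha$ is computed on $\ell^2$ of vertices and edges. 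Contracting the tree towards $x_0$ gives an explicit homotopy to the trivial representation, so $\gamma=1$.

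For Euclidean buildings and hyperbolic groups (Kasparov--Skandalis), I would again use a $CAT(0)$-type or Rips-complex $X$ with combinatorial $\alpha$ and $\beta$. The main obstacle is the bounded-perturbation check that $g\beta-\beta$ is compact. It comes from geodesic convergence, the analogue of the angle estimate $\theta_x\to0$ in Example \ref{DualDirac}, which I would try to establish first. Here only $Rest^F_G(\gamma)=\epsilon_F$ can be shown, using an $F$-fixed base point from Proposition \ref{universal}; one cannot expect $\gamma=1$, because of Proposition \ref{(T)}.

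For Haagerup groups (Higson--Kasparov), I would take a proper affine isometric action on a Hilbert space $H$. Set $D=\varinjlim C_0(V,\mathrm{Cliff}(V))\hat\otimes\mathcal{S}$ over finite-dimensional subspaces $V\subset H$, which is proper because the action is proper. The element $\beta$ is the Bott element, given by Clifford multiplication by $x$, as in Example \ref{DualDirac}. The element $\alpha$ is assembled from the finite-dimensional Dirac operators. Proving $\beta\otimes_D\alpha=1$ reduces to the harmonic-oscillator computation of Theorem \ref{gamma=1} for $\mathbb{E}^n$ (the kernel is one-dimensional and invariant), followed by a rotation homotopy of the affine action to a linear one. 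This infinite-dimensional limit, which requires an asymptotic morphism rather than an honest Kasparov cycle, is where I expect the main technical difficulty.
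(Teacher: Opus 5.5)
The paper gives no proof of this theorem; it only cites the four works. The one extra piece of information is the remark after the statement: Julg--Valette built a cycle in $KK_0^G(\C,\C)$ for trees and proved it equals $1_\C$, and Pimsner then produced $\alpha\in KK_0^G(C_0(V),\C)$ and $\beta\in KK_0^G(\C,C_0(V))$ whose Kasparov product is that cycle. Measured against this, your tree paragraph has a genuine gap.

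The operator you call $\beta$ sends $\delta_x\mapsto\delta_{e_x}$, is unitary off $x_0$ (hence Fredholm), and changes only by a finite-rank operator when $x_0$ is replaced by $gx_0$. That is exactly the Julg--Valette cycle $(\ell^2(V),\ell^2(E),T_{x_0})$ for $KK_0^G(\C,\C)$, i.e.\ a representative of $\gamma$ itself. It is not an element of $KK_0^G(\C,D)$. A dual-Dirac cycle lives on a Hilbert $D$-module, and the requirement is that $1-F^*F$, $1-FF^*$ and $gFg^{-1}-F$ be $D$-compact, i.e.\ vanish at infinity on $X$, as $(1+d_M(x_0,\cdot)^2)^{-1}$ does in Example \ref{DualDirac}. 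Fredholmness on a Hilbert space is the wrong condition.

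The obvious fibrewise reading over the vertices does not work either:
\begin{itemize}
\item if the fibre at $x$ is spanned by all edges at $x$, then $1-FF^*$ has norm $1$ at every vertex of degree at least $2$;
\item if the fibre is just $\C\delta_{e_x}$, the module structure depends on $x_0$ and is not $G$-equivariant.
\end{itemize}
Note also that $C_0(X)$ acts on $\ell^2(V)$ and $\ell^2(E)$ by compact operators, so the boundary map in your $\alpha$ can be homotoped to $0$; all the analytic content must sit in $\beta$. So condition 1 of Definition \ref{gamma} (the factorization through a proper algebra, which is precisely Pimsner's contribution) is not achieved. Your sentence that $\beta\otimes\alpha$ is computed on $\ell^2(V)\oplus\ell^2(E)$ therefore has no meaning as stated.

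The step ``contracting the tree towards $x_0$ gives a homotopy to the trivial representation'' is not an argument either. A homotopy in $KK_0^G(\C,\C)$ must run through cycles that are $G$-equivariant modulo compacts, and a contraction onto a fixed base point is not equivariant. It also proves too much. A Euclidean building contracts onto any point along geodesics just as well. Yet for $SL_3(\Q_p)$, which has property (T) and acts properly on its $\tilde{A}_2$ Bruhat--Tits building, one has $\gamma\neq 1_\C$, by Proposition \ref{(T)} and the discussion after Proposition \ref{Strongass}, as you yourself observe.

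What is missing is a tree-specific input. One option is Julg--Valette's equivariant deformation of $T_{x_0}$. More cheaply, given your fourth item, use Haagerup's theorem that the path metric of a tree is conditionally negative definite: then groups acting properly on locally finite trees have the Haagerup property, and $\gamma=1_\C$ follows from Higson--Kasparov.

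Your remaining paragraphs are acceptable roadmaps of the cited works, with one caveat for hyperbolic groups. A Rips complex is not $CAT(0)$ and has no unique geodesics, so the ``geodesic convergence'' you propose to establish first is not available as such. Kasparov--Skandalis replace it by bolicity.
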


A comment on the case of groups $G$ acting properly on trees (that includes free groups, $SL_2(\Z)$ or $SL_2(\Q_p)$): in \cite{JuVa}, P. Julg and the author constructed a non-trivial element $\gamma\in KK_0^G(\C,\C)$ and showed $\gamma=1_{\C}$, which was enough to guarantee that $G$ is {\it K-amenable}, meaning that for $G-C^*$-algebra $A$, the canonical map $\lambda_{A,G}:A\rtimes G\ra A\rtimes_r G$, induces isomorphisms in K-theory\footnote{For more on K-amenability, see P. Julg's paper \cite{JulgK-am} in the same series.}. Denoting by $V$ the set of vertices in the tree, Pimsner \cite{Pimsner} then constructs elements $\alpha\in KK_0^G(C_0(V),\C),\;\beta\in KK_0^G(\C,C_0(V))$ and identifies the Kasparov product $\beta\otimes_{C_0(V)}\alpha$ with the element $\gamma$ from \cite{JuVa}.

\section{Inverting the assembly map}

\subsection{Motivation}

In 2018, S. Nishikawa, then a PhD student, posted a very interesting paper \cite{Nish} where, based on a lucid analysis of the Dirac-dual Dirac method, he introduced novel ideas for inverting the assembly map, which can in principle serve as an alternative to the Dirac-dual Dirac method for proving Conjectures \ref{BCconj} or \ref{BCcoeff}.

\subsection{Property $(\gamma)$}

In this section, we shall always assume that there exists a $G$-compact model for $\underline{E}G$, that we shall denote by $\underline{E}$. Nishikawa's basic observation (reminiscent of Atiyah's observation in sub-section \ref{AtiyahEll}) is that, if $G$ admits a $\gamma$-element in the sense of Definition \ref{gamma}, then the Fredholm module underlying $\beta\otimes_D\alpha$ carries an extra structure of $C_0(\underline{E})$-module. He then defines:

\begin{Def}\label{(gamma)} Let $x=(\HH^+,\HH^-,F)$ be a cycle for $KK_0^G(\C,\C)$. We say that $x$ has {\bf property $(\gamma)$} if $\HH^\pm$ carries a non-degenerate, $G$-equivariant representation $\pi_\pm$ of $C_0(\underline{E})$ such that:
\begin{enumerate}
\item For every $f\in C_0(\underline{E})$, the function $g\mapsto F\pi_+(g\cdot f)-\pi_-(g\cdot f)F$ is in $C_0(G,\mathcal{K})$.
\item For every compact subgroup $F$ of $G$, the image of $\gamma$ under the restriction map $Rest_G^F:KK_0^G(\C,\C)\ra R(F)$ is the trivial one-dimensional representation $\epsilon_F$.
\end{enumerate}\end{Def}

Here $(g\cdot f)(x)=f(g^{-1}x)$. Recall that a representation $\pi$ of a $C^*$-algebra $A$ on a Hilbert space $\HH$ is said to be {\it non-degenerate} if the linear span of $\pi(A)\HH$ is dense in $\HH$. Observe that condition (2) in Definition \ref{(gamma)} is exactly the same as condition (2) in the definition of a $\gamma$-element (Definition \ref{gamma}). By way of contrast, Condition 1 in Definition \ref{(gamma)} is comparatively less restrictive as it does not require a factorization of the form $\beta\otimes_D\alpha$. Notice that property $(\gamma)$ is really a property of a cycle, not of its homotopy class.

\begin{Lem}\label{nondeg} Let $\pi$ be a non-degenerate representation of $C_0(\underline{E})$ on $\HH$.Then for every $f\in C_0(\underline{E}),T\in\mathcal{K}(\HH)$ we have $\lim_{g\ra\infty}\|\pi(g\cdot f)T\|=0$.
\end{Lem}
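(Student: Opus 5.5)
The plan is to reduce first to rank-one operators, then to vectors in the dense subspace given by non-degeneracy, and finally to a statement about supports that uses properness of the action on $\underline{E}$.

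\emph{Reductions.} Since $\|\pi(g\cdot f)\|\leq\|f\|_\infty$ for all $g$, the set of $T\in\mathcal{K}(\HH)$ satisfying the conclusion is a closed linear subspace. Finite-rank operators are dense in $\mathcal{K}(\HH)$, so it is enough to treat rank-one operators $T=\langle\cdot|\eta\rangle\xi$. For these $\|\pi(g\cdot f)T\|=\|\pi(g\cdot f)\xi\|\,\|\eta\|$, so we must show $\lim_{g\ra\infty}\|\pi(g\cdot f)\xi\|=0$ for every $\xi\in\HH$. By the same uniform bound, it suffices to do this for $\xi$ in a dense subset. By non-degeneracy, the span of vectors $\pi(k)\zeta$ with $k\in C_c(\underline{E})$ and $\zeta\in\HH$ is dense. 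Here we use that $C_c(\underline{E})$ is dense in $C_0(\underline{E})$. Similarly we may replace $f$ by some $f\in C_c(\underline{E})$, since $\|g\cdot f-g\cdot f'\|_\infty=\|f-f'\|_\infty$.

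\emph{Key step.} For $f,k\in C_c(\underline{E})$ we have $\pi(g\cdot f)\pi(k)=\pi((g\cdot f)k)$. The support of $(g\cdot f)k$ lies in $g\,\mathrm{supp}(f)\cap\mathrm{supp}(k)$. Properness of the $G$-action on $\underline{E}$ should give that, for compact sets $K,L\subset\underline{E}$, the set $\{g\in G: gK\cap L\neq\emptyset\}$ is relatively compact in $G$. Hence $(g\cdot f)k=0$ for $g$ outside a compact subset of $G$, so $\pi(g\cdot f)\pi(k)\zeta=0$ eventually as $g\ra\infty$. This finishes the argument.

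\emph{Main obstacle.} The one non-formal point is deducing this "compact sets return only for relatively compact sets of $g$" property from Definition \ref{properaction}, which is phrased through local equivariant maps $f_x:U\ra G/H$. I would cover $K\cup L$ by finitely many invariant neighbourhoods $U$ with equivariant maps $U\ra G/H$, where $H$ is compact. Inside one such $U$, the statement reduces to the analogous one for compact subsets of $G/H$. That case holds because $G/H\ra G$-cosets have compact fibres $H$: if $gC\cap C'\neq\emptyset$ for compact $C,C'\subset G$, then $g\in C'C^{-1}$. The remaining care is the bookkeeping of images of compact sets in different charts.
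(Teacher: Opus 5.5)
Your argument is the paper's proof, step for step. You reduce to rank-one $T$, then to vectors $\pi(k)\zeta$ via non-degeneracy, then to compactly supported $f,k$. You conclude because properness makes $g\mapsto (g\cdot f)k$ vanish off a compact subset of $G$. Your uniform bound $\|\pi(g\cdot f)\|\leq\|f\|_\infty$ just makes explicit the ``linearity and density'' steps the paper invokes. Like you, the paper simply takes properness to mean that $\{g: gK\cap L\neq\emptyset\}$ is relatively compact for compact $K,L$.

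One caution about your last paragraph: that property does \emph{not} follow from Definition~\ref{properaction} as literally stated, so the chart bookkeeping you postpone cannot be completed.

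\emph{Counterexample.} Let $\Z$ act on $X=\R^2\setminus\{0\}$ via $T(x,y)=(2x,y/2)$.
\begin{itemize}
\item Every point has an open ball $W$ with $T^nW\cap W=\emptyset$ for $n\neq 0$. Control $|x|$ if the first coordinate is nonzero, and $|y|$ otherwise.
\item Set $U=\bigcup_{n}T^nW$ and send $T^nW$ to $n$. This is an invariant open neighbourhood with an equivariant continuous map to $\Z$, so Definition~\ref{properaction} holds with trivial $H$.
\item Yet $T^n(2^{-n},1)=(1,2^{-n})$. Take the compact sets $K=\{(0,1)\}\cup\{(2^{-n},1):n\geq 1\}$ and $L=\{(1,0)\}\cup\{(1,2^{-n}):n\geq1\}$. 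Then $T^nK\cap L\neq\emptyset$ for every $n\geq 1$.
\end{itemize}
This is exactly where your bookkeeping breaks. In the chart built around $(0,1)$, the image of $L\cap U$ in $\Z$ is unbounded, and no single chart controls both $K$ and $L$.

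\emph{What is missing} is Hausdorffness of the orbit space $G\backslash X$. With it the bookkeeping is easy. Suppose $g_n\to\infty$, $y_n\in K$, and $g_ny_n\in L$. Pass to subsequences with $y_n\to y$ and $g_ny_n\to z$. Hausdorffness of $G\backslash X$ forces $z\in Gy$, so $y$ and $z$ lie in one invariant chart $\phi:U\to G/H$. Then $\phi(y_n)\to\phi(y)$ and $g_n\phi(y_n)\to\phi(z)$ force the $g_n$ to lie eventually in a compact set of the form $C'HC^{-1}$, a contradiction.

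So either read ``proper'' in the standard sense, as the paper implicitly does, or add this hypothesis. Either way your proof of the lemma is then complete.
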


\bpr The proof is by a series of reductions:
\begin{itemize}
\item By linearity and density, we may assume that $T$ has rank 1.
\item It is then enough to show $\lim_{g\ra\infty}\|\pi(g\cdot f)(\xi)\|=0$ for every $\xi\in\HH$.
\item Since $\pi$ is non-degenerate, again by linearity and density we may assume that $\xi=\pi(f_0)\xi_0$ for some $f_0\in C_0(\underline{E}),\xi_0\in\HH$, and we must show $$\lim_{g\ra\infty}\|\pi((g\cdot f)f_0)(\xi_0)\|=0.$$
\item By density, we may assume that $f$ and $f_0$ have compact support. Since $G\curvearrowright\underline{E}$ is proper, the function $g\mapsto (g\cdot f)f_0$ has compact support on $G$, which concludes the proof.\epr
\end{itemize}

The basic observation is now that, if $x=(\HH^+,\HH^-,F)$ is a cycle for $KK_0^G(\C,\C)$ with property $(\gamma)$, then for any $G-C^*$-algebra $B$, it is possible to induce up $x$ to an element $\tilde{x}_B\in KK_0^G(C_0(\underline{E})\otimes(B\rtimes_rG),B)$, where $G$ acts trivially on $B\rtimes_rG$. This element $\tilde{x}_B$ can then be used to define a homomorphism $\nu_B^x:K_i(B\rtimes_r G)\ra KK_i^G(C_0(\underline{E}),B)\;(i=0,1)$, so in the direction opposite to the assembly map $\mu_B$. To define $\nu_B^x$ we start by identifying $K_i(B\rtimes_r G)$ with $KK_i^G(\C,B\rtimes_r G)$, with $G$ acting trivially on $\C$ and $B\rtimes_rG$. Then we define
$$\nu_B^x:KK_i^G(\C,B\rtimes_r G)\ra KK_i^G(C_0(\underline{E}),B):y\mapsto \tau_{C_0(\underline{E})}(y)\otimes_{C_0(\underline{E})\otimes(B\rtimes_rG)}\tilde{x}_B.$$
It turns out that $\nu_B^x$ will provide at least a left inverse for $\mu_B$ (see Theorem \ref{Shin} below).

Let us sketch the construction of $\tilde{x}_B$ for $B=\C$ and $G$ discrete. The Hilbert spaces underlying $\tilde{x}_\C\in KK_0^G(C_0(\underline{E})\otimes C^*_r(G),\C)$ will be $\tilde{\HH}^\pm:=\ell^2(G,\HH^\pm)=\ell^2(G)\otimes\HH^\pm$, endowed with the unitary representation of $G$ defined as the tensor product of $\lambda_G$ with the given $G$--representation on $\HH^\pm$.



We need a representation of $C^*_r(G)$ on $\ell^2(G,\HH^\pm)$, commuting with $\pi_\pm(C_0(\underline{E}))$ so that it extends to the tensor product $C_0(\underline{E})\otimes C^*_r(G)$: for this we simply let $C^*_r(G)$ act on $\ell^2(G,\HH^\pm)$ through the {\it right} regular representation of $G$. We denote by $\tilde{\pi}_\pm$ the corresponding representation of $C_0(\underline{E})\otimes C^*_r(G)$. 

\smallskip
Finally we define the operator $\tilde{F}:\tilde{\HH}^+\ra\tilde{\HH}^-$ by
$$(\tilde{F}\xi)(g)=(g\cdot F)(\xi(g)),$$
for $\xi\in\ell^2(G,\HH^+),g\in G$.

We claim that $\tilde{x}_\C :=(\tilde{\HH}^+,\tilde{\HH}^-,\tilde{F})$ defines an element in $KK_0^G(C_0(\underline{E})\otimes C^*_r(G),\C)$. Let us check the 3rd condition in Definition \ref{KK0cycle}. So we must check that, for $a\in C_0(\underline{E})\otimes C^*_r(G)$, the operator $\tilde{F}\tilde{\pi}_+(a)-\tilde{\pi}_-(a)\tilde{F}$ is compact. By linearity and density, we may assume $a=f\otimes\delta_h$, for some $f\in C_0(\underline{E}),h\in G$. For $\xi\in\ell^2(G,\HH^+), g\in G$, we then have:
$$([\tilde{F}\tilde{\pi}_+(f\otimes\delta_h)-\tilde{\pi}_-(f\otimes\delta_h)\tilde{F}]\xi)(g)=(g\cdot F)(\tilde{\pi}_+((f\otimes\delta_h)\xi)(g))-\pi_-(f)(\tilde{F}\xi(gh))$$
$$=(g\cdot F)\pi_+(f)(\xi(gh))-\pi_-(f)(gh\cdot F)(\xi(gh))$$
$$=[(g\cdot F)\pi_+(f)-\pi_-(f)(g\cdot F)](\xi(gh)) + [\pi_-(f)((g\cdot F)-(gh\cdot F)](\xi(gh))$$
So, up to the operator of right translation by $h$, the operator $\tilde{F}\tilde{\pi}_+(f\otimes\delta_h)-\tilde{\pi}_-(f\otimes\delta_h)\tilde{F}$ is given by the block-diagonal operator
$$\bigoplus_{g\in G}([(g\cdot F)\pi_+(f)-\pi_-(f)(g\cdot F)] + [\pi_-(f)((g\cdot (F-(h\cdot F)))])$$
We must show that each summand is compact, with norm going to 0 for $g\ra\infty$. By property $(\gamma)$, the term in the first bracket is indeed a compact operator with norm going to 0 for $g\ra\infty$. For the term in the 2nd bracket: the operator $F-(h\cdot F)$ is compact, as $x$ is a cycle for $KK_0^G(\C,\C)$, so lemma \ref{nondeg} applies to conclude that the norm of $\pi_-(f)((g\cdot (F-(h\cdot F)))$ goes to 0 for $g\ra\infty$.

The conditions $(\tilde{F}^*\tilde{F}-1)\tilde{\pi}_+(a)$ and $(\tilde{F}\tilde{F}^*-1)\tilde{\pi}_-(a)$ compact, for $a\in C_0(\underline{E})\otimes C^*_r(G)$, are checked in a similar way.

\subsection{Some results by Nishikawa \cite{Nish}}

The next result clarifies the link between property $(\gamma)$ and the existence of a $\gamma$-element. Compare with the first part of Theorem \ref{Tu}.

\begin{Prop}\begin{enumerate}
\item If $x$ is a cycle for $KK_0^G(\C,\C)$ with property $(\gamma)$, then $x$ is unique up to homotopy. Moreover $x$ defines an idempotent in the ring $KK_0^G(\C,\C)$ (as a consequence of uniqueness).
\item If $G$ admits a $\gamma$-element, then some cycle homotopic to $\gamma$ satisfies property $(\gamma)$.
\end{enumerate}\end{Prop}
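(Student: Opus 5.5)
For part (1), the plan is to show that any two cycles $x,y$ with property $(\gamma)$ satisfy $x = x\otimes_\C y = y$ in $KK_0^G(\C,\C)$. Idempotence then follows by taking $y=x$; alternatively, one checks that $x\otimes_\C x$ again has property $(\gamma)$ and applies uniqueness.

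The central identity to establish is: if $x$ has property $(\gamma)$ and $z\in KK_0^G(\C,\C)$ is an arbitrary cycle whose restriction to every compact subgroup $F$ equals $\epsilon_F$, then $x\otimes_\C z = x$. The key steps, in order, are as follows.
\begin{enumerate}
\item Using the representation $\pi_\pm$ of $C_0(\underline{E})$, view $x$ as factoring through $C_0(\underline{E})$. Concretely, condition 1 of Definition \ref{(gamma)} together with the cut-off function $h$ of Lemma \ref{canon} lets us write $x$ as the image of a class $x'\in KK_0^G(C_0(\underline{E}),\C)$ under $\beta_0\in KK_0^G(\C,C_0(\underline{E}))$. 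Here $\beta_0$ is essentially the class of the projection $p_{\underline{E}}$, or more naively $\sum_g \pi(g\cdot h)^{1/2}(g\cdot F)\pi(g\cdot h)^{1/2}$; the homotopy to $F$ is a straight-line homotopy, and condition 1 is exactly what makes it a homotopy of cycles. In other words, $x$ is ``supported'' near $\underline{E}$.
\item For $C_0(\underline{E})$-algebras, tensoring with $z$ is governed by restriction to compact subgroups. Since $\underline{E}$ is proper and $G$-compact, it is locally of the form $G\times_F V$ with $F$ compact. By induction, $KK^G(C_0(G\times_F V),\cdot)\cong KK^F(C_0(V),\cdot)$, and on that side $\tau_{C_0(V)}(z)$ acts as $\tau_{C_0(V)}(Rest_G^F z)=\tau_{C_0(V)}(\epsilon_F)=1$.
\item A Mayer--Vietoris (partition of unity) argument over a finite cover of $G\backslash\underline{E}$ then gives $\tau_{C_0(\underline{E})}(z)=1$ in $KK^G_0(C_0(\underline{E}),C_0(\underline{E}))$. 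Hence $x\otimes z = x'\otimes \tau(z) \ldots = x$, using commutativity of the exterior product over $\C$.
\end{enumerate}
Applying the identity once with $z=y$ gives $x\otimes y=x$. Applying it with the roles of $x$ and $y$ swapped gives $y\otimes x=y$. Since the product over $\C$ is commutative, $x=y$.

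For part (2), take $\gamma=\beta\otimes_D\alpha$ with $D$ a $G$-$X$-$C^*$-algebra, $X$ proper. By the universal property there is a $G$-map $X\to\underline{E}$, which makes $D$ a $C_0(\underline{E})$-algebra. Represent $\alpha$ by $(\HH^\pm,F_\alpha)$, on which $D$, and hence $C_0(\underline{E})$, acts; after averaging we may take the action non-degenerate. Realize the Kasparov product on $\mathcal{E}_\beta\otimes_D\HH^\pm$, with $C_0(\underline{E})$ acting through the central multipliers of $D$. Because $C_0(\underline{E})$ is central in $M(D)$, it almost commutes with both $F_\beta\otimes 1$ and any $F_\beta$-connection built from $F_\alpha$. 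The commutators decay at infinity in $G$ by Lemma \ref{nondeg} combined with the $G$-continuity of $F_\beta$, which gives condition 1. Condition 2 holds by Definition \ref{gamma}.

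The main obstacle is step 2 of part (1), namely proving $\tau_{C_0(\underline{E})}(z)=1$ from the hypothesis on restrictions to compact subgroups; the rest is bookkeeping. I would first try to deduce it from Theorem \ref{Tuproper}-type arguments, or more directly from the induction isomorphism plus the cut-and-paste argument described above. A secondary technical point is verifying that the product cycle in part (2) satisfies the $C_0(G,\mathcal{K})$ decay, rather than mere compactness.
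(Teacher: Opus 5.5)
\textbf{Where the proof breaks.} Your formal reduction in part (1) is fine: if $x\otimes_\C z=x$ whenever $x$ has property $(\gamma)$ and $Rest_G^F z=\epsilon_F$ for all compact $F$, then uniqueness and idempotence follow from commutativity of $KK_0^G(\C,\C)$. Steps 2--3 are also not the real obstacle. That $\tau_{C_0(\underline{E})}(z)=1$ under this hypothesis is the standard fact recorded in step 4 of the sketch of Theorem \ref{Shin}, and you can cite it. The proof breaks at step 1.

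First, the projection $p_{\underline{E}}$ gives no class in $KK_0^G(\C,C_0(\underline{E}))$. Its class lives in $K_0(C_0(\underline{E})\rtimes G)=KK_0(\C,C_0(\underline{E})\rtimes G)$, which is a non-equivariant group of the crossed product.

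Second, the averaged operator $F'=\sum_g\pi(g\cdot h)^{1/2}(g\cdot F)\pi(g\cdot h)^{1/2}$ is again an operator on $\HH$, now $G$-invariant. For non-compact $G$ it cannot be joined to $F$ by a homotopy of $KK_0^G(\C,\C)$-cycles:
\begin{itemize}
\item If $k$ is a $G$-invariant compact operator on $\HH^\pm$, then covariance gives $\|\pi_\pm(g\cdot f)k\|=\|\pi_\pm(f)k\|$ for all $g$.
\item Lemma \ref{nondeg} and non-degeneracy then force $k=0$.
\item Hence a $G$-invariant cycle on $\HH$ is degenerate and represents $0$. This contradicts condition 2 for the trivial subgroup, where the index must be $1$.
\end{itemize}
The concrete reason is that $F'-F$ contains $\sum_g\pi(g\cdot h)^{1/2}(g\cdot F-F)\pi(g\cdot h)^{1/2}$. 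Condition 1 controls $[F,\pi(g\cdot f)]$, but it says nothing about $g\cdot F-F$.

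\textbf{What is missing.} Together with $\pi$, $F'$ can at best define a class over $C_0(\underline{E})$, namely your $x'$. In the model case $\mathbb{E}^n$, $x'$ is the Dirac element, and the $\beta_0$ you need is the dual-Dirac element, which exists there for geometric reasons. In general, a factorization $x=\beta_0\otimes_{C_0(\underline{E})}x'$ would make $x$ a $\gamma$-element in the sense of Definition \ref{gamma} with $D=C_0(\underline{E})$. That is exactly the factorization that property $(\gamma)$ is designed to avoid. As written, your part (1) is Tu's argument and covers only cycles already known to be $\gamma$-elements.

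What you need instead is a way for $x$ itself to absorb $C_0(\underline{E})$-linear data without factoring through $C_0(\underline{E})$. The fact from step 4 holds $C_0(\underline{E})$-linearly: the pull-back of $z$ to $\underline{E}$ equals $1$ in $RKK^G(\underline{E};\C,\C)$. This equality is realised by a homotopy of $G$-equivariant families $(T_p)_{p\in\underline{E}}$ on a Hilbert space $\HH'$, whose Kasparov conditions hold only locally over $\underline{E}$. Using the non-degenerate representation $\pi$, such a family can be glued into one operator on $\HH'\hat\otimes\HH$ and combined with $F$ as in a Kasparov product. You must then show that the result is a genuine $KK_0^G(\C,\C)$-cycle. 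This is the step that has to use the decay $g\mapsto[F,\pi(g\cdot f)]\in C_0(G,\mathcal{K})$, much as in the check that $\tilde{x}_\C$ is a cycle. Compactness of $[F,\pi(f)]$ alone only yields a class over $C_0(\underline{E})$. Running this along the homotopy would give $x\otimes_\C z\sim x$.

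\textbf{Part (2).} This goes in a reasonable direction.
\begin{itemize}
\item Right multiplication by the image of $C_0(\underline{E})$ in $ZM(D)$ commutes exactly with all of $\mathcal{L}_D(\mathcal{E}_\beta)$.
\item Non-degeneracy on $\mathcal{E}_\beta\otimes_D\HH$ is automatic from $\mathcal{E}_\beta=\overline{\mathcal{E}_\beta C_0(\underline{E})}$, so no averaging is needed.
\item The decay in $g$ must come from the $D$-compactness built into $\beta$, via $\|k\,(g\cdot f)\|\to0$ for $k\in\mathcal{K}_D(\mathcal{E}_\beta)$. This is a Hilbert-module form of Lemma \ref{nondeg}.
\item It cannot come from $G$-continuity of $F_\beta$, which only gives continuity in $g$.
\item The operators $M,N$ from Kasparov's technical theorem must also be chosen so that this decay survives in the product.
\end{itemize}
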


The next result shows that an element with property $(\gamma)$ is nearly as good as a $\gamma$-element. Compare with the second part of Theorem \ref{Tu}.

\begin{Thm}\label{Shin}  Assume that there exists a cycle $x$ with property $(\gamma)$.
\begin{enumerate}
\item For any $G-C^*$-algebra $B$, we have $\nu_B^x\circ\mu_B=1_{KK_i^G(\underline{E},B)}$, so that $\mu_B$ is split injective. In particular the strong Novikov conjecture (Conjecture \ref{SNC}) holds.
\item If $x$ is homotopic to $1_{\C}$ in $KK_0^G(\C,\C)$, then the Baum-Connes conjecture with coefficients (Conjecture \ref{BCcoeff}) holds for $G$.
\end{enumerate}\end{Thm}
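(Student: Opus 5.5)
The plan is to prove part 1 by a direct Kasparov-product computation identifying $\nu_B^x\circ\mu_B$ with the action of $x$ on $KK_i^G(C_0(\underline{E}),B)$. Then part 2 follows by a "same trick as Proposition \ref{Strongass}" argument: under the extra hypothesis, $\mu_B\circ\nu_B^x$ is also the identity.

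For part 1, let $y\in KK_i^G(C_0(\underline{E}),B)$. Then $\mu_B(y)=[p_{\underline{E}}]\otimes j_G(y)\in K_i(B\rtimes_rG)$. Applying $\nu_B^x$ and using associativity of the Kasparov product (Theorem \ref{KKproduct}, equivariant version), I would rewrite
$$\nu_B^x(\mu_B(y))=\tau_{C_0(\underline{E})}([p_{\underline{E}}])\otimes\tau_{C_0(\underline{E})}(j_G(y))\otimes\tilde{x}_B .$$
The key intermediate claim is a naturality formula for the induction $x\mapsto\tilde{x}_B$ in the coefficient $B$. Concretely,
$$\tau_{C_0(\underline{E})}(j_G(y))\otimes\tilde{x}_B = \tilde{x}_{C_0(\underline{E})}\otimes_{C_0(\underline{E})} y ,$$
where $\tilde{x}_{C_0(\underline{E})}\in KK_0^G(C_0(\underline{E})\otimes(C_0(\underline{E})\rtimes_rG),C_0(\underline{E}))$. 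To prove it, I would first verify it on cycles built as in the $B=\C$ construction, namely $\ell^2(G)\otimes$-type modules with $\tilde{F}$ acting by $g\cdot F$ fibrewise, by checking that both sides are given by the same module and that the operators satisfy the connection conditions.

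This reduces part 1 to the \emph{universal} statement
$$\tau_{C_0(\underline{E})}([p_{\underline{E}}])\otimes\tilde{x}_{C_0(\underline{E})}=x\otimes 1_{C_0(\underline{E})}=\tau_{C_0(\underline{E})}(x)\in KK_0^G(C_0(\underline{E}),C_0(\underline{E})).$$
Here the cutoff function $h$ of Lemma \ref{canon} should implement an isometry from $\HH^\pm\otimes C_0(\underline{E})$ into the induced module: compressing $\tilde{F}$ by $p_{\underline{E}}$ gives $\int_G h^{1/2}(g\cdot F)h^{1/2}\,dg$. Property $(\gamma)$(1) together with Lemma \ref{nondeg} shows that this differs from $F$ by a locally compact perturbation, hence is homotopic to it by a straight-line homotopy.

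Next I would show that $\tau_{C_0(\underline{E})}(x)$ acts as the identity on $KK_i^G(C_0(\underline{E}),B)$, i.e. $\tau_{C_0(\underline{E})}(x)=1_{C_0(\underline{E})}$. This is where condition (2) of Definition \ref{(gamma)} enters. Since $\underline{E}$ is $G$-compact and proper, it is locally of the form $G\times_F V$ with $F$ compact. So $KK^G(C_0(\underline{E}),\cdot)$ reduces by a Mayer--Vietoris/induction argument to $KK^F$, where $x$ restricts to $\epsilon_F=1$. This is the step I expect to be the main obstacle. I would try first to avoid the cell-by-cell induction via the homotopy $(1-t)$-rescaling $F\mapsto \pi(h_t)F$ style argument, in the manner of Tu's proof of Theorem \ref{Tu}: the product $\tau(x)$ restricted to $C_0(G\times_F U)$ equals the induction of $Rest^F_G(x)=\epsilon_F$. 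Combining the pieces gives $\nu_B^x\circ\mu_B=\mathrm{id}$, hence split injectivity and, specializing to $B=\C$ through $\iota_*$, the strong Novikov conjecture.

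For part 2, if $x=1_\C$ in $KK_0^G(\C,\C)$, I would compute $\mu_B\circ\nu_B^x$ similarly. By the same naturality and associativity, it equals the product with $j_G(\tau_B(x))$ on $K_i(B\rtimes_rG)$, as in the bottom row of the diagram in Proposition \ref{Strongass}. This is the identity when $x=1_\C$, since $\tau_B$ and $j_G$ preserve units. Hence $\mu_B$ is also surjective, and Conjecture \ref{BCcoeff} holds.
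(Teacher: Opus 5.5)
\paragraph{Gap in the universal compression step.}
Your reduction is sound as far as it goes. By naturality of $\mu$ and of $\nu^x$ in the coefficient (the same naturality the paper uses in its step 3), $\nu^x_B\mu_B(y)=\nu^x_{C_0(\underline{E})}\mu_{C_0(\underline{E})}(1)\otimes_{C_0(\underline{E})}y$. So everything hinges on your universal identity $\tau_{C_0(\underline{E})}([p_{\underline{E}}])\otimes\tilde{x}_{C_0(\underline{E})}=\tau_{C_0(\underline{E})}(x)$. That identity is true, but your proof of it has a genuine gap.

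After compressing by $p_{\underline{E}}$ you get the module $\HH\otimes C_0(\underline{E})$ with operator the field $F'_z=\int_G h(g^{-1}z)\,(g\cdot F)\,dg$ ($z\in\underline{E}$). However, the \emph{source} copy of $C_0(\underline{E})$ acts through $\pi\otimes 1$, i.e.\ on $\HH$, because in $\nu^x$ it is fed into $\tilde{x}$, where it acts via $\pi$. By contrast, $\tau_{C_0(\underline{E})}(x)=(\HH\otimes C_0(\underline{E}),1\otimes m,F\otimes 1)$ has $C_0(\underline{E})$ acting by multiplication on the coefficient.

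With the action $\pi\otimes 1$, two things go wrong:
\begin{itemize}
\item The triple with $F\otimes 1$ is not even a Kasparov cycle, since $(F\pi_+(f)-\pi_-(f)F)\otimes 1$ does not vanish at infinity in $z$.
\item $F'-F\otimes 1$ is not locally compact. This would require $\|\pi(f)(g\cdot F-F)\|\to 0$ as $g\to\infty$, which already fails for the Bott--Dirac cycle on $\mathbb{E}^1$ (test it on the Gaussian spanning $\ker F$). Lemma \ref{nondeg} does not help, because the compact operator $g\cdot F-F$ moves with $g$.
\end{itemize}
With the action $1\otimes m$ the straight-line homotopy $F'\sim F\otimes 1$ does work (and uses nothing about property $(\gamma)$), but that is not the cycle the product produces. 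So the missing step is precisely trading the $\pi$-action for the coefficient action, which is the real content of the $\nu\circ\mu$ direction. A direct proof would need something like asymptotic degeneracy of the cycle away from the diagonal of $\underline{E}\times\underline{E}$, combined with the $G$-homotopy between the two projections (Proposition \ref{universal}).

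\paragraph{How the paper avoids this, and a repair.}
The paper never computes $\nu\circ\mu$ directly. It first proves the easier identity $\mu_B\circ\nu^x_B=\,\cdot\otimes j_G(\tau_B(x))$ (your part 2 computation). It then transfers this using bijectivity of assembly for proper coefficients (Theorem \ref{Tuproper}) and the Meyer--Nest approximation.

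You can repair your argument the same way, and even skip Meyer--Nest. Apply that identity with $B=C_0(\underline{E})$ to $\mu_{C_0(\underline{E})}(1)$ to get
\[
\mu_{C_0(\underline{E})}\big(\nu^x_{C_0(\underline{E})}\mu_{C_0(\underline{E})}(1)\big)=\mu_{C_0(\underline{E})}(1)\otimes j_G(\tau_{C_0(\underline{E})}(x))=\mu_{C_0(\underline{E})}\big(\tau_{C_0(\underline{E})}(x)\big).
\]
Injectivity of $\mu_{C_0(\underline{E})}$ then gives your universal identity, and your Yoneda-type reduction handles general $B$.

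Your remaining step, $\tau_{C_0(\underline{E})}(x)=1$ from condition (2), is only sketched. The paper asserts it at the same level (its step 4).
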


{\bf Ideas for  the proof:} There are 4 steps.

\begin{enumerate}
\item One first checks that $\mu_B\circ\nu_B^x$ coincides with the module action of $x$ on $K_i(B\rtimes_r G)$. More precisely, for every $y\in K_i(B\rtimes_r G)$, we have $(\mu_B\circ\nu_B^x)(y)=y\otimes_{B\rtimes_r G} j_G(\tau_B(x))$. In particular $\mu_B\circ\nu_B^x=1_{K_i(B\rtimes_r G)}$ if $x=1_{\C}$.

\item Assume that $\mu_B$ is an isomorphism. Then we have an obviously commutative diagram, with vertical arrows being isomorphisms:
$$\left.\begin{array}{ccc}KK_i^G(C_0(\underline{E}),B) & \stackrel{\nu_B^x\circ\mu_B}{\longrightarrow} & KK_i^G(C_0(\underline{E}),B)\\ & & \\ 
\mu_B\downarrow\simeq&  & \simeq\downarrow\mu_B \\ & & \\ 
K_i(B\rtimes_rG) & \stackrel{\mu_B\circ\nu_B^x}{\longrightarrow} & K_i(B\rtimes_r G)
\end{array}\right.$$
By the first step, the bottom horizontal map is the module action of $x$. But the diagram also commutes if the top horizontal map is replaced by the module action of $x$. As vertical maps are isomorphisms, we deduce that the top horizontal map coincides with the module action of $x$. 

\item We now remove the assumption in the second step, i.e we show that $\nu_B^x\circ\mu_B$ is the module action of $x$ on $KK_i(C_0(\underline{E}),B)$ for any $G-C^*$-algebra $B$. By a result of R. Meyer and R. Nest \cite{MeNe}, there exists a proper $G-C^*$-algebra $\tilde{B}$ and $y\in KK_0^G(\tilde{B},B)$ such that the top horizontal map of the commutative diagram

$$\left.\begin{array}{ccc}
KK_i^G(C_0(\underline{E}),\tilde{B}) & \stackrel{\otimes_{\tilde{B}} y}{\longrightarrow} & KK_i^G(C_0(\underline{E}),B)\\ & & \\ 
\mu_{\tilde{B}}\downarrow&  & \downarrow\mu_B \\ & & \\ 
K_i(\tilde{B}\rtimes_rG) & \stackrel{\otimes_{\tilde{B}\rtimes_r G}  j_G(y)}{\longrightarrow} & K_i(B\rtimes_r G)
\end{array}\right.$$

is an isomorphism (also $\mu_{\tilde{B}}$ is an isomorphism, by Theorem \ref{Tuproper}). Applying $\nu_{\tilde{B}}^x$ on the left and $\nu_B^x$ on the right, we get the commutative diagram

$$\left.\begin{array}{ccc}
KK_i^G(C_0(\underline{E}),\tilde{B}) & \stackrel{\otimes_{\tilde{B}} y}{\longrightarrow} & KK_i^G(C_0(\underline{E}),B) \\ & & \\ 
\nu_{\tilde{B}}^x\circ\mu_{\tilde{B}}\downarrow&  & \downarrow\nu_B^x\circ\mu_B \\ & & \\ 
KK_i^G(C_0(\underline{E}),\tilde{B}) & \stackrel{\otimes_{\tilde{B}} y}{\longrightarrow} & KK_i^G(C_0(\underline{E}),B)
\end{array}\right.$$

with horizontal maps being isomorphisms. By the second step, the left vertical map is the module action of $x$ on $KK_i^G(C_0(\underline{E}),\tilde{B})$. Since the diagram also commutes when the right vertical map is replaced by the module action of $x$, we deduce that $\nu_B^x\circ\mu_B$ is indeed the module action of $x$ on $KK_i^G(C_0(\underline{E}),B) $.

\item Because $\underline{E}$ is $G$-proper, a cycle $z$ for $KK_0^G(\C,\C)$ acts as the identity on $KK_j^G(C_0(\underline{E}),B)$ for every $G-C^*$-algebra $B$, if and only if it satisfies $Rest_G^F z=\epsilon_F$ for every compact subgroup $F$ of $G$.\epr
\end{enumerate}

\begin{Ex} Let $X$ be either Euclidean space $\mathbb{E}^n$ or a Riemannian symmetric space of the non-compact type. In this case, we know that there is a $\gamma$-element represented in the unbounded operator picture by the operator $D$ in formula (\ref{DdualD}), namely
$$D=d+d^*+ext(d\rho)+int(d\rho),$$
with $\rho(x)=\frac{d_M(x_0,x)^2}{2}$. Pass to the bounded picture by $F:=D(1+D^*D)^{-1/2}$. Then Nishikawa shows directly that the cycle $(L^2(\Lambda^+T^*M),L^2(\Lambda^-T^*M),F)$ has property $(\gamma)$. For this he appeals to the integral formula
$$F=\frac{2}{\pi}\int_0^\infty D(1+D^*D+\lambda^2)^{-1}\,d\lambda,$$
based on the identity $\frac{s}{\sqrt{1+s^2}}=\frac{2}{\pi}\int_0^\infty\frac{s}{1+s^2+\lambda^2}\,d\lambda$ for $s>0$.
\end{Ex}

Nishikawa is then able to reprove that the following groups satisfy the Baum-Connes conjecture with coefficients:
\begin{itemize}
\item Closed subgroups of $Isom(\mathbb{E}^n)$.
\item Groups acting properly co-compactly on trees.
\item (in joint work with J. Brodzki, E. Guentner and N. Higson, see \cite{BGHN}) Groups acting properly co-compactly on finite-dimensional CAT(0) cubical complexes.
\end{itemize}

\section{The assembly map for some discrete groups}

In this final section we gather some examples of {\it discrete} groups where the assembly map $\mu_G$ can be explicitly described.

If a torsion-free group $G$ has a model for $BG$ with dimension $\leq 3$, then $K_0(BG)\simeq H_0(BG,\Z)\oplus H_2(BG,\Z)$ and $K_1(BG)\simeq H_1(BG,\Z)\oplus H_3(BG,\Z)$ (see \cite{Matthey}; note that homology of the classifying space identifies with standard group homology). Of course $H_0(BG,\Z)=\Z$, generated by $[*]$, the inclusion of a base-point.

\subsection{The group $\Z$}

For $G$ infinite cyclic, say with generator $t$, we have on the left-hand side:
\begin{itemize}
\item $H_0(S^1,\Z)=\Z$ generated by $[*]$.
\item $H_1(S^1,\Z)=\Z$, generated by the fundamental cycle $[c]$.
\end{itemize}

On the right-hand side we have $C^*(\Z)=C(S^1)$ (by Example \ref{abelian1}), so 
\begin{itemize}
\item $K_0(C^*(\Z))=\Z$ generated by $[1]$.
\item $K_1(C^*(\Z))=\Z$ generated by $[t]$, the class of the generator $t$.
\end{itemize}
Then $\mu_\Z[*]=[1]$ and $\mu_\Z[c]=[t]$.

\subsection{The free group on $n$ generators}

For $n\geq 2$, let $F_n$ be the free group on the $n$ generators $t_1,...,t_n$. Then $BG$ is a wedge (=bouquet) of $n$ circles $c_1,...,c_n$, so that on the left-hand side:
\begin{itemize}
\item $H_0(BG,\Z)=\Z$ generated by $[*]$.
\item $H_1(BG,\Z)=\Z^n$ generated by $[c_1],...,[c_n]$.
\end{itemize}

On the left-hand side we have by a deep result of Pimsner-Voiculescu \cite{PV}:
\begin{itemize}
\item $K_0(C^*_r(F_n))=\Z$, generated by $[1]$.
\item $K_1(C^*_r(F_n))=\Z^n$, generated by $[t_1],...,[t_n]$, the classes of the free generators $t_1,...,t_n$.
\end{itemize}

Then $\mu_{F_n}[*]=[1]$ and $\mu_{F_n}[c_i]=[t_i]$ for $i=1,...,n$.

Note that a consequence of the Pimsner-Voiculescu computation is that $C^*_r(F_m)$ is not isomorphic to $C^*_r(F_n)$ for $m\neq n$: that was a longstanding open question in 1982!

\subsection{Finite groups}

Let $F$ be a finite group. By the first part of Example \ref{universalex}, we may take $\underline{E}F$ to be a point, and $K_0^F(\underline{E}F)=R(F)$, the additive group of the representation ring of $F$; and $K_1^F(\underline{E}F)=0$. View $R(F)$ as the free abelian group on $\hat{F}$, the dual of $F$.

On the other hand 
$$C^*_r(F)=\C F=\oplus_{\pi\in\hat{F}}M_{\dim\pi}(\C).$$
So that $K_1(C^*_r(F))=0$ and 
$$K_0(C^*_r(F))=\oplus_{\pi\in\hat{F}}\Z\cdot[e_\pi],$$
where $e_\pi$ is a minimal projection in $M_{\dim\pi}(\C)$. If $\dim\pi\geq 2$, there is no canonical choice for $e_\pi$, although the class $[e_\pi]$ is canonical. At the other extreme for $\pi=\epsilon_F$, the corresponding projection is 
\begin{equation}\label{p_F}
p_F:=\frac{1}{|F|}\sum_{g\in F}g.
\end{equation}

Then $\mu_F:R(F)\ra K_0(C^*_r(F))$ is given by $\mu_F[\pi]=[e_\pi]$ for $\pi\in\hat{F}$.

\subsection{Some locally finite groups}

A locally finite group $B$ is the union $B=\cup_{n>0} B_n$ of an increasing family of finite groups $B_n$, with $B_n\subset B_{n+1}$. By the last part of Example \ref{universalex}, we know that $\underline{E}B$ is a tree that fibers onto $\N$.

Now $$C^*(B)=\lim_{\longrightarrow} C^*(B_n)$$ is an AF-algebra\footnote{AF stands for {\it approximately finite}, i.e. an inductive limit of finite-dimensional $C^*$-algebras; those were fashionable in the 1980's.}. Since K-theory commutes with inductive limits, we have
$$K_i(C^*(B))=\lim_{\longrightarrow} K_i(C^*(B_n))\;(i=0,1);$$
in particular $K_i(C^*(B))=0$. By functoriality of the assembly map, we get that $\mu_B: K_i^G(\underline{E}B)\ra K_i(C^*(B))\,(i=0,1)$ is an isomorphism.

It may be interesting to have an explicit description of $\mu_B$, at least in special cases. So let $(F_n)_{n>0}$ be a collection of finite groups, set $B_n:=\oplus_{k=1}^nF_k$ and 
$$B:=\lim_{\longrightarrow}B_n=\bigoplus_{k=1}^\infty F_k.$$ 
We embed $\widehat{B_n}$ into $\widehat{B_{n+1}}$ via
$$s_n:\widehat{B_n}\ra\widehat{B_{n+1}}:\pi\mapsto\pi\otimes\epsilon_{F_{n+1}}.$$

\begin{Prop}\label{FPV1}(see \cite{FPV17}) The group $K_0^B(\underline{E}B)$ is naturally isomorphic to the free abelian group on the inductive limit of the system $(\widehat{B_n},s_n)_{n>0}$.
\end{Prop}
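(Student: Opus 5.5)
The plan is to compute $K_0^B(\underline{E}B)$ as an inductive limit over the finite subgroups $B_n$, and then to identify the transition maps with $s_n$. The first step is to use the continuity of equivariant K-homology with respect to increasing unions of open subgroups. Since $B=\bigcup_n B_n$ with each $B_n$ finite and open (the group is discrete), I expect
$$K_0^B(\underline{E}B)\simeq \lim_{\longrightarrow} K_0^{B_n}(\underline{E}B_n)=\lim_{\longrightarrow} K_0^{B_n}(\mathrm{pt})=\lim_{\longrightarrow} R(B_n),$$
where $R(B_n)=KK_0^{B_n}(\C,\C)$ is the free abelian group on $\widehat{B_n}$ (Examples \ref{compactgroup} and \ref{universalex}).

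To make this concrete I would use the tree model from the last part of Example \ref{universalex}. Its vertex set is $\coprod_n B/B_n$, with edges $\{gB_n,gB_{n+1}\}$. The proper $B$-compact subsets are exhausted by the subtrees $X_n$ spanned by the vertices of levels $\le n$, and each $X_n$ equivariantly deformation retracts onto its top level $B/B_n$ by sliding along edges upward. Hence
$$K_0^B(X_n)\simeq K_0^B(B/B_n)=KK_0^B(C_0(B/B_n),\C)\simeq KK_0^{B_n}(\C,\C)=R(B_n)$$
by the induction (Green–Julg/imprimitivity) isomorphism: a $B$-Fredholm module over $B/B_n$ is determined by its fibre over the coset $eB_n$, which is a $B_n$-module. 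Similarly $K_1^B(X_n)\simeq KK_1^{B_n}(\C,\C)=0$.

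The key step, and the one I expect to be the main obstacle, is identifying the map $R(B_n)\ra R(B_{n+1})$ induced by the inclusion $X_n\subset X_{n+1}$. Under the retractions this inclusion corresponds to the equivariant map $B/B_n\ra B/B_{n+1}$, $gB_n\mapsto gB_{n+1}$. I expect the induced map on K-homology to be induction $\mathrm{Ind}_{B_n}^{B_{n+1}}$ from $R(B_n)$ to $R(B_{n+1})$: pushing forward a module over the fibre $\{eB_n\}$ to the fibre over $eB_{n+1}$ sums over the $B_{n+1}$-translates of $eB_n$, which is exactly the induced representation. I would verify this directly on cycles, where only finite-dimensional $B_n$-representations occur.

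Induction is not the map $s_n$, so one more step is needed. Since $B_{n+1}=B_n\times F_{n+1}$, we have
$$\mathrm{Ind}_{B_n}^{B_{n+1}}\pi=\pi\otimes\lambda_{F_{n+1}}=\sum_{\sigma\in\widehat{F_{n+1}}}\dim\sigma\,(\pi\otimes\sigma),$$
which contains $\pi\otimes\epsilon_{F_{n+1}}=s_n(\pi)$ with multiplicity one. Hence the limit of the induction maps is not obviously free on $\lim_{\longrightarrow}\widehat{B_n}$. I would address this by making a change of basis in each $R(B_n)$ that converts the induction system into the system generated by the $s_n$. The natural candidate is to use an unnormalized basis that absorbs the regular-representation factors, for example $\pi\otimes\epsilon$-type elements corrected by the other irreducibles, in the spirit of the projections $p_F$ of (\ref{p_F}). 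Once this is done, the limit becomes the free abelian group on the direct limit of sets, and checking that this change of basis is natural is where I expect most of the work to lie. Naturality in the final statement would then follow from functoriality of all the identifications used.
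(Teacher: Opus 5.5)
Your reduction is correct. The $X_n$ are cofinal among $B$-compact subsets, each retracts equivariantly onto $B/B_n$, and the inclusions induce $\mathrm{Ind}_{B_n}^{B_{n+1}}$. Hence $K_0^B(\underline{E}B)=\lim_{\longrightarrow}(R(B_n),\mathrm{Ind}_{B_n}^{B_{n+1}})$, with $\mathrm{Ind}_{B_n}^{B_{n+1}}\pi=\pi\otimes\lambda_{F_{n+1}}$. (The paper gives no argument of its own here and refers to \cite{FPV17}.)

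What is missing is the last step. You leave the change of basis unspecified and call it the main difficulty. In fact the multiplicity-one fact you already noticed settles it at once. For $\pi=\pi_1\otimes\cdots\otimes\pi_n\in\widehat{B_n}$ set
$$\phi_n(\pi):=\tau_1\otimes\cdots\otimes\tau_n\in R(B_n),\qquad \tau_k:=\left\{\begin{array}{ll}\pi_k & \mbox{if}\;\pi_k\neq\epsilon_{F_k},\\ \lambda_{F_k} & \mbox{if}\;\pi_k=\epsilon_{F_k}.\end{array}\right.$$
Since $\lambda_{F_k}=\epsilon_{F_k}+\sum_{\sigma\in\widehat{F_k}\setminus\{\epsilon_{F_k}\}}(\dim\sigma)\,\sigma$, the set $\{\lambda_{F_k}\}\cup(\widehat{F_k}\setminus\{\epsilon_{F_k}\})$ is obtained from $\widehat{F_k}$ by a unitriangular matrix. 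It is therefore a $\Z$-basis of $R(F_k)$. This is the only point where integrality is at stake, and it uses exactly $\dim\epsilon_{F_k}=1$. As $R(B_n)=\bigotimes_{k\leq n}R(F_k)$, the set $\phi_n(\widehat{B_n})$ is a $\Z$-basis of $R(B_n)$, and
$$\mathrm{Ind}_{B_n}^{B_{n+1}}\phi_n(\pi)=\phi_n(\pi)\otimes\lambda_{F_{n+1}}=\phi_{n+1}(\pi\otimes\epsilon_{F_{n+1}})=\phi_{n+1}(s_n(\pi)).$$
So the $\Z$-linear extensions of the $\phi_n$ form an isomorphism of inductive systems $(\Z[\widehat{B_n}],s_n)\ra(R(B_n),\mathrm{Ind})$. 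Passing to the limit gives the proposition.

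Naturality then costs nothing, since no choices were made. Let $H=\bigoplus_{k\in\mathrm{Supp}(\pi)}F_k$; then $\phi_n(\pi)=\mathrm{Ind}_H^{B_n}(\bigotimes_{k\in\mathrm{Supp}(\pi)}\pi_k)$. Thus the basis element attached to $\pi$ is the image in $K_0^B(\underline{E}B)$ of an irreducible representation of a finite subgroup, and it is compatible with the transition maps. This is exactly the isomorphism written down in Proposition \ref{FPV2}(1).

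One correction to your hint about $p_F$. The factor that absorbs the regular representation is $\lambda_{F_k}$, which $\mu_{F_k}$ sends to $[1]$; correspondingly, the induction maps match the unital inclusions $x\mapsto x\otimes 1$ on the $K$-theory side. The projection $p_F$ of (\ref{p_F}) is $\mu_F(\epsilon_F)$, and it is attached to the maps $s_n$ through the non-unital maps $j_n$.
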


To be even more specific, assume that all $F_n$'s are equal to the same finite group $F$ with $|F|>1$, so that $B_n=F^n$ and $B=F^{(\N)}$. Consider the {\it non-unital} *-homomorphism
$$j_n:C^*(B_n)\ra C^*(B_{n+1})=C^*(B_n)\otimes C^*(F):x\mapsto x\otimes p_F$$
with $p_F$ as in (\ref{p_F}). It turns out that 
$$(j_n)_*\circ\mu_{B_n}=\mu_{B_{n+1}}\circ s_n.$$
With the help of Proposition \ref{FPV1}, this will be useful in describing $\mu_B$ explicitly.

For $X$ a pointed set, with base-point $o$, denote by $X^{(\N)}$ the set of maps $\N\ra X$ which take the value $o$ for all but finitely many integers. The set $\hat{F}$ is pointed by $\epsilon_F$; the set $Min(F)$ of classes of minimal projections in $C^*(F)$ is pointed by $p_F$.

\begin{Prop}\label{FPV2}(see \cite{FPV17}). Let $F$ be a non-trivial finite group, $B=F^{(\N)}$.
\begin{enumerate} 
\item The free abelian group on $\hat{F}^{(\N)}$ is naturally isomorphic to $K_0^G(\underline{E}B)$ with isomorphism given by
$$\pi\in\hat{F}^{(\N)}\mapsto\otimes_{k\in Supp(\pi)}\pi_k\in R(\oplus_{k\in Supp(\pi)}F_k)\subset K_0^B(\underline{E}B).$$
\item The free abelian group on $Min(F)^{(\N)}$ is naturally isomorphic to $K_0(C^*(B))$, with isomorphism given by
$$e\in Min(F)^{(\N)}\mapsto \otimes_{k\in Supp(e)} e_k\in Min(\oplus_{k\in Supp(e)} F_k)\subset K_0(C^*(B)).$$
\item For $\pi\in\hat{F}^{(\N)}$ we have $\mu_B(\pi)=\otimes_{k\in Supp(\pi)}\mu_F(\pi_k)\in K_0(C^*(B))$.
\end{enumerate}\end{Prop}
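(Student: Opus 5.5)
The plan is to reduce everything to the finite stages $B_n=F^n$ and then pass to inductive limits on both sides, using Proposition \ref{FPV1} and the compatibility $(j_n)_*\circ\mu_{B_n}=\mu_{B_{n+1}}\circ s_n$ stated just before.

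For part 1, I will apply Proposition \ref{FPV1} with all $F_k=F$. Then $\widehat{B_n}=\hat F^{\,n}$, and $s_n$ sends $(\pi_1,\dots,\pi_n)$ to $(\pi_1,\dots,\pi_n,\epsilon_F)$, because irreducibles of a product of finite groups are exactly the outer tensor products $\pi_1\otimes\cdots\otimes\pi_n$. The inductive limit of the pointed sets $(\hat F^{\,n},s_n)$ is therefore the set of sequences that are eventually $\epsilon_F$, namely $\hat F^{(\N)}$. Unwinding the isomorphism of Proposition \ref{FPV1}, a sequence $\pi$ corresponds to the class of $\otimes_{k\le n}\pi_k\in R(B_n)=K_0^{B_n}(\mathrm{pt})$ for any $n\ge\max Supp(\pi)$. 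This class is independent of $n$ precisely because $s_n$ tensors by $\epsilon_F$. Restricting to $Supp(\pi)$ gives the stated formula, viewing $R(\oplus_{k\in Supp(\pi)}F_k)$ inside $K_0^B(\underline EB)$ via the inclusion of subgroups.

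For part 2, I will write $K_0(C^*(B))=\varinjlim K_0(C^*(B_n))$. Here $C^*(B)$ is the inductive limit along the unital inclusions $x\mapsto x\otimes 1$. The first point to settle is that, for computing the limit, one may instead use the non-unital maps $j_n$. The paper's compatibility relation suggests this, but it has to be justified. I would argue as follows: $C^*(B)$ is also the inductive limit of the corners/ideal structure $C^*(F^n)\otimes p_F^{\otimes\infty}$. More concretely, I would identify $K_0(C^*(F^n))=\Z^{Min(F)^n}$ and observe that both systems give the same limit after the reindexing $\Z^{Min(F)^n}\to\Z^{Min(F)^{n+1}}$. Under $j_n$, a minimal projection $e_1\otimes\cdots\otimes e_n$ maps to $e_1\otimes\cdots\otimes e_n\otimes p_F$, which is again minimal, since $p_F$ is the minimal central projection of $\epsilon_F$. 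So $(j_n)_*$ is the injection of bases $Min(F)^n\hookrightarrow Min(F)^{n+1}$ obtained by appending the base-point, and its limit is the free abelian group on $Min(F)^{(\N)}$. This is the step I expect to be the main obstacle. The unital inclusion sends $[e]$ to $\sum_{\sigma}\dim\sigma\,[e\otimes e_\sigma]$, not to $[e\otimes p_F]$. So I must check carefully that there is a natural isomorphism between the two limit groups, for example via the automorphism of $\Z^{Min(F)^n}$ induced by $\mu_{B_n}$ and its compatibility with both maps. Alternatively, I could directly verify that $\{\otimes_{k\in Supp(e)}e_k\}$ is a basis of $\varinjlim$ under the unital maps by an upper-triangular change of basis with respect to the number of non-base-point coordinates.

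For part 3, I will use naturality. For $\pi$ with support in $\{1,\dots,n\}$, functoriality of assembly for the injective inclusion $B_n\hookrightarrow B$ (stated earlier in the paper) gives $\mu_B(\pi)$ as the image of $\mu_{B_n}(\pi_1\otimes\cdots\otimes\pi_n)$. The finite group computation $\mu_{B_n}[\sigma]=[e_\sigma]$ then applies, together with the fact that a minimal projection of $\sigma_1\otimes\cdots\otimes\sigma_n$ is a tensor product of minimal projections of the $\sigma_k$. This yields $\mu_{B_n}(\otimes\pi_k)=\otimes_k\mu_F(\pi_k)$. Finally, the relation $(j_n)_*\circ\mu_{B_n}=\mu_{B_{n+1}}\circ s_n$ shows that the coordinates with $\pi_k=\epsilon_F$ contribute the factor $p_F=\mu_F(\epsilon_F)$, which is the base-point in $Min(F)$. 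These coordinates can therefore be dropped, which gives the formula with the product over $Supp(\pi)$ only.
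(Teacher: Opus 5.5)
There is a genuine gap. It sits in Part 1 and propagates to Part 3: you treat the maps $s_n$ (padding by $\epsilon_F$) as the structure maps of $K_0^B(\underline{E}B)=\varinjlim K_0^{B_n}(\mathrm{pt})=\varinjlim R(B_n)$. They are not. The map induced by $B_n\hookrightarrow B_{n+1}$ is induction, $\sigma\mapsto\sigma\otimes\lambda_F$, where $\lambda_F=\epsilon_F+\sum_{\rho\neq\epsilon_F}\dim\rho\cdot\rho$ is the regular representation. This is forced by functoriality of assembly for injective homomorphisms, since the unital inclusion $\iota_n:x\mapsto x\otimes 1$ gives $(\iota_n)_*[e_\sigma]=\sum_{\rho}\dim\rho\,[e_\sigma\otimes e_\rho]$. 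Hence the class of $\pi_1\otimes\cdots\otimes\pi_n$ does depend on $n$. It is also not the image of $\otimes_{k\in Supp(\pi)}\pi_k$ under the subgroup inclusion you invoke next, which pads with $\lambda_F$, not $\epsilon_F$.

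The same confusion recurs on the $C^*$-side. $C^*(B)$ is the limit along the $\iota_n$, not along $j_n$; for $F=\Z/2\Z$ the $j_n$-limit is $c_0$ of a countable discrete set, while $C^*(B)\cong C(\{0,1\}^{\N})$. So in Part 3 the factors $p_F$ cannot be dropped. The canonical trace gives $(\tau_B)_*[x\otimes p_F]=|F|^{-1}(\tau_B)_*[x]$, so $[x\otimes p_F]\neq[x]=[x\otimes 1]$ for $x\neq 0$. Concretely, for $\pi$ with empty support your recipe gives the class of $\epsilon_{B_n}$. Its assembly $[p_F^{\otimes n}]$ has trace $|F|^{-n}$, which depends on $n$, whereas the correct element $[\mathrm{pt}]$ assembles to $[1]$.

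The missing idea is a unitriangular change of basis. Since $\epsilon_F$ has coefficient $1$ in $\lambda_F$, the set $\{\lambda_F\}\cup(\hat F\setminus\{\epsilon_F\})$ is a $\Z$-basis of $R(F)$. Likewise $\{[1]\}\cup\{[e_\rho]:\rho\neq\epsilon_F\}$ is a basis of $K_0(C^*(F))$, because $[1]=[p_F]+\sum_{\rho\neq\epsilon_F}\dim\rho\,[e_\rho]$. In the tensor-power bases of $R(B_n)$ and $K_0(C^*(B_n))$, induction and $\iota_n$ become ``append the base-point''. Hence the two limits are free on $\hat F^{(\N)}$, resp. $Min(F)^{(\N)}$, under $\epsilon_F\leftrightarrow\lambda_F$ and $p_F\leftrightarrow 1$. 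These basis vectors are exactly the elements in the statement: induced from, resp. pushed forward unitally from, $\oplus_{k\in Supp}F_k$.

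Equivalently, set $\phi_n=\phi^{\otimes n}$, with $\phi(\epsilon_F)=\lambda_F$ and $\phi(\rho)=\rho$ otherwise; then $\phi_{n+1}\circ s_n=\mathrm{Ind}\circ\phi_n$. The analogous $\psi_n$ (sending $[p_F]\mapsto[1]$) satisfies $\psi_{n+1}\circ(j_n)_*=(\iota_n)_*\circ\psi_n$. Moreover $\mu_{B_n}\circ\phi_n=\psi_n\circ\mu_{B_n}$, because $\mu_F(\lambda_F)=[1]$. This is how Proposition \ref{FPV1} and the relation $(j_n)_*\circ\mu_{B_n}=\mu_{B_{n+1}}\circ s_n$ can legitimately be used. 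Note that $\mu_{B_n}$ is the identity in the bases indexed by $\hat F^n$, so it cannot serve as the intertwining automorphism you propose.

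Your upper-triangular fallback in Part 2 is the right idea, but it must be applied on the K-homology side too. Once it is, Part 3 needs no dropping of coordinates. With $S=Supp(\pi)$, functoriality of $\mu$ for the injective inclusion $\oplus_{k\in S}F_k\hookrightarrow B$ gives the formula directly. One only needs $\mu[\sigma]=[e_\sigma]$ for finite groups and the factorization $e_{\otimes_k\pi_k}=\otimes_k e_{\pi_k}$.
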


\subsection{Lamplighter groups}

For $F$ a non-trivial finite group, consider the {\it lamplighter group} $L_F:=F\wr\Z=(\oplus_\Z F)\rtimes \Z$ where $\Z$ acts on itself by translations. The normal subgroup $B:=\oplus_\Z F$ is a locally finite group precisely of the form studied in Proposition \ref{FPV2}. Let $t$ denote one generator of $\Z$, acting as the bilateral shift on $B$.

\begin{Prop}\label{lamplighter} (see \cite{FPV17})\begin{enumerate}
\item $K_0(C^*(L_F))$ is free abelian of countable rank, with an explicit basis consisting in projections from $C^*(B)$.
\item $K_1(C^*(L_F))=\Z$ generated by the class $[t]$ of $t$.
\item There exists a 2-dimensional model for for $\underline{E}L_F$ and $\mu_{L_F}$ is an isomorphism that can be explicitly described.
\end{enumerate}\end{Prop}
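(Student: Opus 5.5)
The natural tool is the Pimsner--Voiculescu six-term exact sequence for the crossed product $C^*(L_F)=C^*(B)\rtimes_t\Z$, combined with the explicit description of $K_0(C^*(B))$ from Proposition \ref{FPV2}. Note first that $L_F$ is amenable, being an extension of the locally finite group $B$ by $\Z$. Hence $C^*(L_F)=C^*_r(L_F)$, and $\mu_{L_F}$ is an isomorphism by Higson--Kasparov (Theorem \ref{gammaexists}). The content of the statement is therefore the explicit computation.

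Since $B$ is locally finite, $K_1(C^*(B))=0$, and by Proposition \ref{FPV2} $K_0(C^*(B))$ is free abelian on $Min(F)^{(\Z)}$, the finitely supported maps $\Z\to Min(F)$ pointed by $p_F$. The shift $t$ acts on $K_0(C^*(B))$ by permuting this basis via translation of $\Z$. The Pimsner--Voiculescu sequence then reduces to
$$0\to K_1(C^*(L_F))\to K_0(C^*(B))\xrightarrow{1-t_*}K_0(C^*(B))\to K_0(C^*(L_F))\to 0,$$
where the injection on the left is the boundary map.

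The orbits of translation on $Min(F)^{(\Z)}$ are: the single fixed point, namely the constant base-point map corresponding to $[1]$; and free orbits, one for each nonconstant configuration up to translation. This gives the following.
\begin{enumerate}
\item $\ker(1-t_*)=\Z\cdot[1]$. A finitely supported $t$-invariant combination of basis elements must be supported on fixed points, since every other orbit is infinite. The boundary map carries $[t]$ to $\pm[1]$, which gives item 2: $K_1(C^*(L_F))=\Z[t]$.
\item $\mathrm{coker}(1-t_*)$ is free abelian on the orbit set, i.e.\ on $\{[1]\}$ together with one chosen representative per nonconstant orbit, e.g.\ configurations whose support has minimum $0$. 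This orbit set is countably infinite because $|F|>1$. The classes are images of projections $\otimes_k e_k\in C^*(B)$, which gives item 1.
\end{enumerate}
Freeness of the cokernel holds because $1-t_*$ on a free permutation module with free orbits has cokernel $\Z[\text{orbits}]$; the fixed orbit contributes $\Z$ as well, since $1-t_*$ is zero there.

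For item 3, take as model for $\underline{E}L_F$ the product $T\times\R$, where $T$ is the tree $\underline{E}B$ from Example \ref{universalex}. Better, take a $2$-complex on which $L_F$ acts with finite stabilizers, obtained from the tree by a mapping-torus construction for the shift. In either case $\underline{E}L_F$ can be built as an $\R$-bundle over $\underline{E}B$ by viewing $L_F$ as an HNN-type extension.

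There is a topological Pimsner--Voiculescu sequence for $K_*^{L_F}(\underline{E}L_F)$ in terms of $K_*^B(\underline{E}B)$ and the shift. By Proposition \ref{FPV2}, $K_0^B(\underline{E}B)$ is free on $\hat F^{(\Z)}$, with $t$ acting by translation. Naturality of assembly gives a map of six-term sequences, and the maps on the $B$-terms are isomorphisms by Proposition \ref{FPV2}(3). The five lemma then yields the isomorphism. It also gives the explicit description: $\mu_{L_F}$ sends the basis element indexed by an orbit of $\pi\in\hat F^{(\Z)}$ to the class of $\otimes_k e_{\pi_k}$, and sends the $K_1$ generator (the circle class from $\R/\Z$) to $[t]$.

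The main obstacle is this compatibility of assembly with the two Pimsner--Voiculescu sequences, in particular with the boundary maps. I would handle it by citing the naturality of $\mu$ for $\Z$-extensions (Chabert--Echterhoff/Oyono-Oyono), or by checking it on the $K_1$ generator directly, using the computation $\mu_\Z[c]=[t]$.
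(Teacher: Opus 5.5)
Your Pimsner--Voiculescu computation of items 1 and 2 is correct. This includes $\partial[t]=\pm[1]$, which follows by naturality from the case $\C\rtimes\Z=C(S^1)$ applied to the unital equivariant inclusion $\C\to C^*(B)$. Your use of amenability and Higson--Kasparov for bijectivity is exactly what the paper does.

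The paper cites \cite{FPV17} for items 1--2; its own contribution here is an explicit $2$-dimensional model. That is where your proposal has a genuine gap.

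\textbf{The model $T\times\R$ does not exist.} Point stabilizers in the tree $T=\underline{E}B$ are exactly the subgroups $B_n$, which are normal in $B$, so the stabilizer of $gB_n$ is $B_n$. A bijection $\tau$ of $T$ with $\tau(bx)=t(b)\tau(x)$ would carry $\mathrm{Stab}(x)=B_n$ to $\mathrm{Stab}(\tau x)=t(B_n)$. But $t(B_n)$ is a product of $2n+1$ consecutive factors not centred at $0$, so it is no $B_m$. Hence your ``in either case'' is false.

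\textbf{The ``mapping torus for the shift'' needs a map.} It has no content until you say what map of $T$ the shift induces. The missing observation is that $t(B_{n-1})\subset B_n$. Consequently $\phi(gB_{n-1}):=t(g)B_n$ is a well-defined, height-raising, non-injective simplicial map $T\to T$ with $\phi(bx)=t(b)\phi(x)$.

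The model is the bi-infinite mapping telescope of $\phi$. It has levels $V_k$ on which $b$ acts by $t^k(b)$, oblique edges from $(gB_{n-1},k)$ to $(t(g)B_n,k+1)$, and quadrilaterals over the edges of $T$. This is precisely the paper's construction. It fibres over $\R$ with tree fibres; it is not an $\R$-bundle over $\underline{E}B$.

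You must still verify the model property:
\begin{itemize}
\item Stabilizers lie in $B$ and are finite.
\item A finite $H\le B$ fixes $(gB_n,k)$ if and only if $t^k(H)\subseteq B_n$, independently of $g$. So $H$-fixed sets are telescopes of nonempty subtrees, hence contractible.
\end{itemize}

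\textbf{The explicit description is easier than you think.} The obstacle you single out, compatibility of assembly with the two six-term sequences including boundary maps, is real for your route but avoidable. Since $\mu_{L_F}$ is already known to be an isomorphism, use the functoriality of $\mu$ for the injective homomorphisms $B\hookrightarrow L_F$ and $\langle t\rangle\hookrightarrow L_F$. It gives $\mu_{L_F}\circ\mathrm{Ind}_B^{L_F}=\iota_*\circ\mu_B$ and $\mu_{L_F}(\mathrm{Ind}_{\Z}^{L_F}[c])=\iota_*(\mu_\Z[c])=[t]$.

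By your PV computation, $\iota_*\circ\mu_B$ is onto $K_0(C^*(L_F))$ with kernel corresponding to $\mathrm{im}(1-t_*)$. One therefore reads off two facts directly:
\begin{itemize}
\item $K_0^{L_F}(\underline{E}L_F)$ is free on the induced classes of one $\pi\in\hat F^{(\Z)}$ per translation orbit, and these go to the classes of $\otimes_{k}e_{\pi_k}$.
\item The induced circle class generates $K_1^{L_F}(\underline{E}L_F)$ and goes to $[t]$.
\end{itemize}
No topological Pimsner--Voiculescu sequence or five lemma is needed. Your route would buy a proof of bijectivity independent of Higson--Kasparov, but only at the price of quoting that compatibility theorem.
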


Of course, as $L_F$ is amenable, the fact that $\mu_{L_F}$ is an isomorphism follows from the last item in Theorem \ref{gammaexists}. The point of Proposition \ref{lamplighter} is to make this isomorphism explicit. 

Let us close by describing the 2-dimensional model for $\underline{E}L_F$, in a slightly more explicit way than in \cite{FPV17}. For $k\in\Z,n\in\N$, set $F_k:=F$ and $B_n :=\oplus_{k=-n}^n F_k$, so that $B=\cup_{n=0}^\infty B_n's$. Let $\underline{E}B$ be as in the last part of Example \ref{universalex}, i.e a tree with vertex set $V:=\coprod_{n\geq 0} B/B_n$ and edge set $\{(gB_n,gB_{n+1}): g\in B, n\geq 0\}$. Observe that, for $n>0$, every vertex in $B/B_n$ has $|F|^2=[B_n:B_{n-1}]$ incoming edges, and just one outgoing edge.

The vertex set of $\underline{E}L_F$ is $V\times\Z$. Set $V_k:=V\times\{k\}$, the {\it $k$-th level} of $\underline{E}L_F$. The group $\Z$ acts by translation on the second factor. The element $b\in B$ acts on $V_k$ as $t^k(b)$; using the defining relation $t^{-1}bt=t(b)$, one checks that the $B$- and $\Z$- actions combine to extend to an action of $L$. There are horizontal edges (those coming from $\underline{E}B$ in each level), and oblique edges defined by connecting $(gB_{n-1},k)$ in the $k$-th level to $(t(g)B_{n},k+1)$ in the $(k+1)$-th level ($g\in B,n\in\N,k\in\Z$; this is well-defined because $t(B_{n-1})\subset B_n$). The set of oblique edges is invariant under the $L$-action. Finally, we glue a 2-cell on each quadrilateral 
$$(gB_{n-1},k),(t(g)B_n,k+1), (t(g)B_{n+1},k+1), (gB_n,k)$$
consisting of 2 oblique and 2 horizontal edges ($g\in B,n\in\N,k\in\Z$). This way we get a 2-dimensional $\underline{E}L_F$.

\begin{Exo} Check the details in the above construction.
\end{Exo}
\begin{figure}[h]
\centering

\begin{tikzpicture}[
    scale=1.05,
    point/.style={circle, fill=black, inner sep=1.6pt},
    blackline/.style={black, thick},
    blackdash/.style={black, thick, dashed},
    redline/.style={red!75!black, thick},
    reddash/.style={red!75!black, thick, dashed},
    greenarrow/.style={green!75!black, thick, -{Latex[length=3mm]}},
    every node/.style={font=\small}
]

\hspace{-2cm}
\coordinate (A0) at (0,0);
    \coordinate (A01) at (0,0.7);
    \coordinate (A02) at (0,1.4);
    \coordinate (A03) at (0,2.1);
\coordinate (A1) at (4,0);
    \coordinate (A11) at (4,0.7);
    \coordinate (A12) at (4,1.4);
    \coordinate (A13) at (4,2.1);
\coordinate (A2) at (8,0);
\coordinate (A3) at (12,0);

\coordinate (B0) at (0,6.5);
    \coordinate (B01) at (0,7.1);
    \coordinate (B02) at (0,7.7);
    \coordinate (B03) at (0,8.3);
\coordinate (B1) at (4,6.5);
    \coordinate (B11) at (4,7.1);
    \coordinate (B12) at (4,7.7);
    \coordinate (B13) at (4,8.3);
\coordinate (B2) at (8,6.5);
\coordinate (B3) at (12,6.5);

\draw[blackline] (A0) -- (A3);
\draw[blackline] (B0) -- (B3);

\node[below] at (A0) {$(g B_{n-1},k)$};
\node[below] at (A1) {$(g B_n,k)$};
\node[below] at (A2) {$(g B_{n+1},k)$};
\node[right] at (A3) {};
\node[right] at (13.5,0) {$V_k$};

\node[below right] at (B0) {};
\node[below right] at (B1) {$(t(g) B_{n},k+1)$};
\node[below right] at (B2) {$(t(g) B_{n+1},k+1)$};
\node[above] at (B3) {$(t(g)B_{n+2}, k+1)$};
\node[right] at (13.5,6.5) {$V_{k+1}$};

\foreach \P in {A0, A01,A02,A03}
{
    \draw[redline] ($(\P)$) -- ($(B1)$);
}

\foreach \P in {A11,A12,A13}
{
    \draw[reddash] ($(\P)$) -- ($(B2)$);
}

\foreach \P in {A1}
{
    \draw[redline] ($(\P)$) -- ($(B2)$);
}

\foreach \P in {A2}
{
    \draw[redline] ($(\P)$) -- ($(B3)$);
}


\coordinate (C1) at (0.3889,0.6319);
\coordinate (C2) at (0.3889,1.2639);
\coordinate (C3) at (0.3889,1.8958);

\draw[blackline] (A01) -- (C1);
\draw[blackdash] (C1) -- (A1);

\draw[blackline] (A02) -- (C2);
\draw[blackdash] (C2) -- (A1);

\draw[blackline] (A03) -- (C3);
\draw[blackdash] (C3) -- (A1);

\foreach \P in {A11,A12,A13}
{
    \draw[blackdash] ($(\P)$) -- ($(A2)$);
}

\foreach \P in {B01,B02,B03}
{
    \draw[blackline] ($(\P)$) -- ($(B1)$);
}

\foreach \P in {B11,B12,B13}
{
    \draw[blackline] ($(\P)$) -- ($(B2)$);
}

\foreach \P in {A0,A1,A2,A3,B0,B1,B2, B3, A01, A02, A03, A11,A12,A13,B01,B02,B03,B11,B12,B13}
    \node[point] at (\P) {};

\draw[greenarrow] (-1.2,0.0) -- (-1.2,6.5);
\node[green!55!black, left] at (-1.3,3.25) {$t$};

\end{tikzpicture}

\caption{$\underline{E} L_F$ for $F=C_2$}
\end{figure}

\noindent
Author's address:\\

\noindent
Institut de math\'ematiques\\
Universit\'e de Neuch\^atel\\
11 Rue Emile Argand - Unimail\\
CH-2000 Neuch\^atel - Switzerland\\
alain.valette@unine.ch

\end{document}